\newtheorem{Thm}{Theorem}[section]
\newtheorem{Lem}[Thm]{Lemma}
\newtheorem{Def}[Thm]{Definition}
\newtheorem{Prop}[Thm]{Proposition}
\newtheorem{Cor}[Thm]{Corollary}
\newtheorem{ThmL}{Theorem}
\newtheorem*{Rem}{Remark}
\newcommand{\mb}{\mathbb}
\newcommand{\ol}{\overline}
\def\sideremark#1{\ifvmode\leavevmode\fi\vadjust{\vbox to0pt{\vss
 \hbox to 0pt{\hskip\hsize\hskip1em
 \vbox{\hsize3cm\tiny\raggedright\pretolerance10000
 \noindent #1\hfill}\hss}\vbox to8pt{\vfil}\vss}}}
\numberwithin{equation}{section}
\title{The Yamabe flow on asymptotically Euclidean manifolds with nonpositive Yamabe constant}
\date{\vspace{-5ex}}
\author{Gilles Carron\thanks{Nantes Universit\'e; gilles.carron@univ-nantes.fr; research partially supported by ANR grants ANR-18-CE40-0012:RAGE
 and ANR-17-CE40-0034:CCEM }, Eric Chen\thanks{University of California, Berkeley, ecc@berkeley.edu; research partially supported by NSF Award DMS-3103392},  Yi Wang\thanks{Johns Hopkins University, ywang261@jhu.edu;
research partially supported by NSF CAREER Award DMS-1845033}}
\begin{document}
\maketitle

\begin{abstract}
We study the Yamabe flow on asymptotically flat manifolds with non-positive Yamabe constant $Y\leq 0$. Previous work by the second and third named authors \cite{ChenWang} showed that while the Yamabe flow always converges in a global weighted sense when $Y>0$, the flow must diverge when $Y\leq 0$. We show here in the $Y\leq 0$ case however that after suitable rescalings, the Yamabe flow starting from any asymptotically flat manifold must converge to the unique positive function which solves the Yamabe problem on a compactification of the original manifold.
\end{abstract}

\section{Introduction}

In this article we continue the study of the convergence of the Yamabe flow
\begin{align}
\begin{cases}
\frac{\partial g}{\partial t}=-R g,\label{floweq}
\\
g(0)=g_0,
\end{cases}
\end{align}
starting from an asymptotically flat (AF) manifold $(M^n,g_0)$. Above, $R$ denotes the scalar curvature of the Riemannian metric $g=g(t)$. This flow preserves the conformal class of $g_0$ in the sense that $g(t)\in[g_0]$ for all times $t$, and is the natural analogue of the volume-normalized Yamabe flow on compact manifolds introduced by Hamilton \cite{Hamilton}.
It is well known that on a compact manifold, the normalized Yamabe flow is the gradient flow of the Einstein--Hilbert functional within a fixed conformal class. It can be viewed as a natural evolution equation which could potentially evolve a given metric to a constant scalar curvature metric within the same conformal class.
For long-time existence and convergence of the Yamabe flow on compact manifolds, we refer interested readers to the work of Hamilton, Chow, Ye, Schwetlick--Struwe, and Brendle \cite{Hamilton, Chow2, Ye,Struwe,Brendle5,Brendle6}.
The study on noncompact manifolds is less developed. On noncompact manifolds, long-time existence has been proved under some assumptions of suitable pointwise bounds on curvature and conformal factors. See for instance \cite{MaLi}, \cite{Schulz}.
%Ma has shown long-time existence when starting from a metric of non-negative scalar curvature which is conformal to a metric of non-positive scalar curvature, assuming the corresponding conformal factor is bounded from above \cite{MaLi}. Schulz has shown long-time existence existence under related hypotheses--namely when starting from a metric of positive Yamabe constant which is conformal to a metric of non-positive and bounded scalar curvature, assuming the corresponding conformal factor is bounded both from above and from below \cite{Schulz}. 
Other works give long-time existence results in the settings of conformally hyperbolic and singular spaces \cite{Schulz2,Bahuaud,Olsen}. Similar to the compact case, convergence results for the Yamabe flow on noncompact manifolds have been slower to develop---we are aware of \cite{MaLi,MaLi21} in which $C^\infty_{loc}$ convergence to a scalar flat limit metric is shown, using crucially an assumption that the initial metric has non-negative scalar curvature, as well as \cite{CLV}, which studies convergence of Yamabe flow on singular spaces with positive Yamabe constant.

The study of Yamabe flow on asymptotically flat manifolds was initiated by \cite{CZ}. They proved short-time existence and that asymptotic flatness is preserved under the flow. They also discussed the ADM mass under the flow.

In a previous article \cite{ChenWang} by the second and third authors, we proved the long-time existence of Yamabe flow \eqref{floweq} on asymptotically flat manifolds. Moreover, we showed that the flow converges in a global weighted sense (defined by \cite{Bartnik86}) if and only if the Yamabe constant $Y(M^n,[g_0])$ is positive. Long-time existence was also studied independently by \cite{MaLi21}, who also considered local convergence assuming nonnegative scalar curvature.
The convergence/divergence behavior of the Yamabe flow on 
asymptotically flat metrics is quite different from that of the Ricci flow; see for example \cite{YuLi} regarding the Ricci flow in this setting in dimension $n=3$. We also refer readers to related results on the Ricci flow in \cite{DaiMa,EC}. 
%of the flow depends on the positivity of the Yamabe constant 

%We write the condition $Y(M,[g_0])>0$ above because 
%For instance, by \cite[Proposition 3]{Maxwell}, which gives the correct version of a result first claimed in \cite[Theorem 2.1]{CB}, the conformal class $(M,[g_0])$ admits a scalar flat, AF metric if and only if it is Yamabe-positive --- we state a version of this result in Proposition \ref{CBY} below.

For convenience, we recall the main theorems in \cite{ChenWang} here, referring to Section \ref{subsec_def} for definitions and notation related to asymptotically flat manifolds. When $Y(M^n,[g_0])> 0$, the flow converges in a weighted global sense. 
\begin{Thm}[{\cite[Theorem 1.3]{ChenWang}}]\label{Ypos}
Let $(M^n,g_0)$, $n\geq 3$ be a $C^{k+\alpha}_{-\tau}$ AF manifold with $Y(M,[g_0])>0$, $k\geq 3$, and  $\tau>1$. Then there exists a Yamabe flow $(M^n,g(t))$ starting from $(M^n,g_0)$ defined for all positive times and a metric $g_\infty$ on $M^n$ which is $C^{k+\alpha}_{-\tau'}$ AF for all $\tau'<\min\{\tau,n-2\}$ so that for any such $\tau'$ we have
\begin{align}
\|g(t) - g_\infty \|_{ C^{k+\alpha}_{-\tau'} }=O(t^{- \delta_0  }),  \quad  \mbox{as} \ t\rightarrow \infty,
\end{align}
for some $\delta_0>0$. In particular, this Yamabe flow converges in $C^{k+\alpha}_{-\tau'}$ to the asymptotically flat, scalar flat metric $g_\infty$.
\end{Thm}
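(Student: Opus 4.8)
The plan is to turn the geometric flow into a scalar parabolic equation for the conformal factor, to identify its limit as the unique scalar-flat representative of $[g_0]$, and then to drive convergence using the dissipation (gradient-flow) structure together with weighted parabolic estimates on the asymptotically flat end. Writing $g(t)=u(t)^{4/(n-2)}g_0$ with $u(0)\equiv 1$ and $u>0$, and letting $L_{g_0}=-\tfrac{4(n-1)}{n-2}\Delta_{g_0}+R_{g_0}$ denote the conformal Laplacian (so that $R_{g(t)}=u^{-(n+2)/(n-2)}L_{g_0}u$), the flow \eqref{floweq} becomes the scalar fast-diffusion equation
\begin{equation}
\partial_t\big(u^{\frac{n+2}{n-2}}\big)=-\tfrac{n+2}{4}\,L_{g_0}u,\qquad u(0)\equiv 1 .
\end{equation}
Taking long-time existence as given, I would first use the maximum principle and barriers to obtain uniform two-sided bounds $0<c_1\le u(t)\le c_2$ and preservation of the AF decay at spatial infinity; these make the factor $u^{4/(n-2)}$ uniformly comparable to a constant and justify the integrations by parts used below.

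Because $Y(M,[g_0])>0$, weighted elliptic (Fredholm) theory for AF Laplacians together with the positivity of $Y$ shows that $L_{g_0}$ is an isomorphism on the relevant weighted spaces, so there is a unique $u_\infty>0$ with $L_{g_0}u_\infty=0$ and $u_\infty\to 1$ at infinity, positivity coming from the strong maximum principle. Then $g_\infty:=u_\infty^{4/(n-2)}g_0$ is scalar flat, and since $u_\infty-1$ decays on the end like $|x|^{-\min\{\tau,n-2\}}$ (a Newtonian tail capped by the $|x|^{-(n-2)}$ Green's-function rate), $g_\infty$ is AF of order $\min\{\tau,n-2\}$ — exactly the origin of the weight restriction $\tau'<\min\{\tau,n-2\}$.

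For convergence set $w=u-u_\infty$ and consider the dissipation energy $\mc E(t)=\tfrac12\int_M w\,L_{g_0}w\,dV_{g_0}$, which is finite (the decay of $w$ compensates for $R_{g_0}\notin L^1$) and nonnegative since $Y>0$. Using $L_{g_0}u_\infty=0$ and the flow equation,
\begin{equation}
\frac{d}{dt}\mc E(t)=\int_M L_{g_0}u\,\partial_t u\,dV_{g_0}=-\tfrac{4}{n-2}\int_M u^{\frac{4}{n-2}}(\partial_t u)^2\,dV_{g_0}\le 0 ,
\end{equation}
so $\mc E$ is monotone with finite total dissipation. On a compact manifold a spectral gap would now give exponential decay, but here the bottom of the spectrum of $L_{g_0}$ is $0$ (the end is Euclidean) and no gap exists; instead I would combine the dissipation identity with a weighted Nash/interpolation inequality adapted to the AF end, bounding $\|w\|_{L^2}$ by the dissipation and a slowly varying weighted quantity, to extract a polynomial rate $\mc E(t)=O(t^{-2\delta_0})$ and hence decay of $w$ in a weighted $L^2$ norm.

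Finally I would upgrade to the stated $C^{k+\alpha}_{-\tau'}$ convergence with rate $O(t^{-\delta_0})$ via interior parabolic Schauder estimates in the AF setting, rescaling on dyadic annuli so that at scale $|x|\sim\sqrt t$ the flow resembles the flat heat equation, and bootstrapping the weighted $L^2$ decay to weighted Hölder decay. The main obstacle is precisely this rate: with no spectral gap, exponential convergence fails and one must squeeze a sharp polynomial rate from the energy identity using the Euclidean structure at infinity, all while controlling the nonlinearity and verifying that the AF decay — and hence the vanishing of boundary terms at infinity — is genuinely preserved along the flow.
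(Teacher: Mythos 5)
You should first note that this paper does not actually prove Theorem \ref{Ypos}: it is quoted verbatim from \cite[Theorem 1.3]{ChenWang}, so your proposal can only be measured against the strategy of that reference (which obtains the rate from decay estimates on the scalar curvature) and against internal consistency. Your skeleton for identifying the limit is sound: the conformal-factor equation is correct, and constructing $u_\infty>0$ with $L_{g_0}u_\infty=0$, $u_\infty\to 1$, via weighted Fredholm theory plus the vanishing of $\ker L_{g_0}$ when $Y>0$, including the explanation of why the weight is capped at $\min\{\tau,n-2\}$, is exactly the right use of the hypothesis. But there is a logical inversion at the start: you claim uniform bounds $0<c_1\le u\le c_2$ ``by the maximum principle and barriers'' \emph{before} $u_\infty$ exists, without saying what the barriers are. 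No barrier argument blind to the sign of $Y$ can work, since Theorem \ref{uconverge} of this very paper shows $\|u(t)\|_{L^\infty}\to\infty$ whenever $Y\le 0$. The fixable version is to build $u_\infty$ first and then use $c_1u_\infty$ and $c_2u_\infty$, which are \emph{static} solutions of \eqref{flow_equation}, as barriers via a comparison principle of the type of Lemma \ref{max_principle_lem}; as written, the step is unjustified.

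The more serious gaps are in the convergence mechanism. First, your energy $\mc E(t)=\tfrac12\int_M wL_{g_0}w\,dV_{g_0}$ need not be finite: one only has $w=O(r^{-\tau'})$ and $|\nabla w|=O(r^{-\tau'-1})$ with $\tau'<\min\{\tau,n-2\}$, so $\int_M|\nabla w|^2\,dV_{g_0}$ diverges whenever $\tau'\le\frac{n-2}{2}$, which occurs for every admissible $\tau'$ as soon as $n\ge 5$ and $\tau\in(1,\frac{n-2}{2}]$ --- a range the theorem explicitly allows ($\tau>1$). So the dissipation identity is not even well defined in the stated generality, and the integrations by parts behind it are only conditionally justified. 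Second, even where $\mc E$ is finite, monotonicity plus finite total dissipation $\int_0^\infty\int u^{4/(n-2)}(\partial_t u)^2\,dV\,dt<\infty$ does not imply convergence of $u(t)$ (the classical gradient-flow pitfall absent a {\L}ojasiewicz-type or spectral-gap input), let alone the quantitative rate $O(t^{-\delta_0})$ that is the actual content of the theorem; your ``weighted Nash/interpolation inequality'' is named but never formulated, so the crux is missing. The proof in \cite{ChenWang} avoids both problems by working with the scalar curvature instead of an energy: $R\ge -1/t$ from the maximum principle (as in Lemma \ref{lem_R_lower} here), upper decay of $\|R(t)\|$ from integral/Moser-iteration arguments whose Sobolev constant is controlled precisely because $Y>0$, and then time-integration of $\partial_t u=-\tfrac{n-2}{4}Ru$ to get convergence of $u$ with a polynomial rate, upgraded to weighted H\"older norms. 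Your final bootstrapping paragraph is plausible but rests entirely on the rate you have not produced.
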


This theorem holds for $n\geq 3$, but in order to make sense the ADM mass of an asymptotically flat manifold when $n=3$, we have a different version in the $n=3$ case by adding the natural conditions $R_{g_0}\geq 0$ and $R_{g_0}\in L^1$ (if we are concerned with the mass).

%As noted earlier, the work of Cheng--Zhu shows that under appropriate initial conditions, the ADM mass of an asymptotically flat manifold with non-negative and integrable scalar curvature is nonincreasing under Yamabe flow \cite[Theorem 1.5]{CZ}, which suggests further study of the mass along the flow. Theorem \ref{Ypos} will suffice for this purpose when $n\geq 4$, since mass is well-defined for $\tau>\frac{n-2}{2}\geq 1$; however, when $n=3$ the condition $\tau>1$ forces the mass to vanish and is too restrictive.

%However, by adding the natural conditions $R_{g_0}\geq 0$ and $R_{g_0}\in L^1$ (if we are concerned with the mass) we can still obtain a weighted convergence result that will allow us to study the mass in dimension $n=3$.

\begin{Thm}[{\cite[Theorem 1.4]{ChenWang}}]\label{Ypos3}
Let $(M^3,g_0)$ be a $C^{k+\alpha}_{-\tau}$ AF manifold with $Y(M,[g_0])>0$, $k\geq 3$, $\tau>\frac{1}{2}$, $R_{g_0}\geq 0$, and $R_{g_0}\in L^1$. Then there exists a Yamabe flow $(M^n,g(t))$ starting from $(M^n,g_0)$ defined for all positive times and a metric $g_\infty$ on $M^n$ which is $C^{k+\alpha}_{-\tau'}$ AF for all $\tau'<\min\{\tau,1\}$ so that for any such $\tau'$ we have
\begin{align}
\|g(t) - g_\infty \|_{ C^{k+\alpha}_{-\tau'} }=O(t^{- \delta_0  }),  \quad  \mbox{as} \ t\rightarrow \infty,
\end{align}
for some $\delta_0>0$. In particular, this Yamabe flow converges in $C^{k+\alpha}_{-\tau'}$ to the asymptotically flat, scalar flat metric $g_\infty$.
\end{Thm}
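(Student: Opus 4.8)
The plan is to run the same conformal-factor analysis used to prove Theorem~\ref{Ypos}, adapting each step to the borderline decay of the three-dimensional setting, and to take long-time existence of the flow as already established. Writing $g(t)=u(t)^4 g_0$ (here $4/(n-2)=4$ since $n=3$), the flow \eqref{floweq} becomes the scalar fast-diffusion equation $\partial_t u = -\tfrac14 R_{g(t)}\,u = -\tfrac14 u^{-4} L_{g_0} u$, where $L_{g_0}=-8\Delta_{g_0}+R_{g_0}$ is the conformal Laplacian. The candidate limit is $g_\infty=u_\infty^4 g_0$, where $u_\infty>0$ is the unique solution of the asymptotically flat Yamabe problem $L_{g_0}u_\infty=0$ with $u_\infty\to 1$ at infinity; such a $u_\infty$ exists precisely because $Y(M,[g_0])>0$, and the hypotheses $\tau>\tfrac12$ and $R_{g_0}\in L^1$ are exactly what is needed so that the ADM mass is well defined and the positive mass theorem applies when $n=3$.

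First I would establish two-sided pointwise control of $u$. Since $R_{g_0}\geq 0$ and the scalar curvature evolves by $\partial_t R = 2\Delta_{g(t)}R + R^2$ along \eqref{floweq}, the maximum principle (using $R\to 0$ at spatial infinity) shows $R_{g(t)}\geq 0$ is preserved for all $t>0$. Consequently $\partial_t u = -\tfrac14 R u \leq 0$, so $u(\cdot,t)$ is monotonically nonincreasing in $t$. Because $R_{g_0}\geq 0$ forces $u_\infty$ to be subharmonic with limit $1$, one has $u_\infty\leq 1 = u(\cdot,0)$, and since $u_\infty$ is a stationary solution of the flow the parabolic comparison principle yields $u_\infty\leq u(\cdot,t)\leq 1$ for all $t$. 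Thus $u(\cdot,t)$ decreases monotonically to a limit $u_\ast\geq u_\infty>0$; passing to the limit in the equation and invoking uniqueness of the scalar-flat solution identifies $u_\ast=u_\infty$, giving at least $C^\infty_{loc}$ subconvergence to $g_\infty$.

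The quantitative rate comes from linearizing the flow at $u_\infty$: the linearized operator is a conjugate of $L_{g_\infty}$, whose coercivity (equivalently the positivity of the Yamabe/mass functional) produces a spectral gap and an estimate of the form $\|u(\cdot,t)-u_\infty\|=O(t^{-\delta_0})$ in a suitable weighted norm. I would then upgrade this to the asserted $C^{k+\alpha}_{-\tau'}$ convergence by applying weighted parabolic Schauder estimates to the difference $u-u_\infty$ and interpolating, exactly as in Theorem~\ref{Ypos}; the restriction $\tau'<\min\{\tau,1\}$ reflects that the slowest-decaying contribution is the $O(r^{-1})=O(r^{-(n-2)})$ mass term in $u_\infty-1$.

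The main obstacle is the weighted decay analysis at the critical three-dimensional rate. Because $n-2=1$ is the borderline exponent for both integrability and the definition of the mass, the Green's function of $\Delta_{g_0}$ decays only like $r^{-1}$, and the coercivity estimate driving the convergence rate is not automatic from $Y>0$ alone; it is here that the extra hypotheses $\tau>\tfrac12$ and $R_{g_0}\in L^1$, together with the strict positivity of the ADM mass supplied by the positive mass theorem, must be used to close the weighted estimate.
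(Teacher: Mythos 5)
A structural point first: this paper does not prove Theorem \ref{Ypos3} at all. It is quoted as background from \cite[Theorem 1.4]{ChenWang} (as is Theorem \ref{Ypos}), so there is no internal proof to compare yours against; what you are really attempting is a reconstruction of the argument of that reference.

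Judged on its own, your qualitative setup is reasonable: preservation of $R\ge 0$ by a noncompact maximum principle, the monotonicity $\partial_t u=-\tfrac14 Ru\le 0$, the barrier $u_\infty\le u\le 1$ (with the caveat that a comparison principle of the type of Lemma \ref{max_principle_lem} requires the spatial limits at infinity to be strictly separated, so you should compare with $(1-\epsilon)u_\infty$ and let $\epsilon\to 0$), and the identification of the monotone limit with $u_\infty$. The genuine gap is in the quantitative step, which is the actual content of the theorem: you derive the rate $O(t^{-\delta_0})$ from a ``spectral gap'' of the linearization at $u_\infty$. Since $R_{g_\infty}=0$, that linearized operator is, up to conjugation by powers of $u_\infty$ and a positive factor, just $\Delta_{g_\infty}$ on an asymptotically flat manifold, whose essential $L^2$ spectrum is $[0,\infty)$; there is no spectral gap, no matter how positive $Y$ or the ADM mass may be, and a genuine gap would in any case produce exponential rather than polynomial decay. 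So this mechanism cannot close the argument. The route consistent with \cite{ChenWang} is different: $Y>0$ is tied to a uniform Euclidean-type Sobolev inequality along the flow (compare Theorem \ref{uconverge}, where the blow-up of the Sobolev constant is exactly the obstruction when $Y\le 0$), which via Moser iteration yields time-decay of $\|R_{g(t)}\|_{L^\infty}$; integrating $\partial_t u=-\tfrac{n-2}{4}Ru$ in time and applying weighted elliptic/parabolic Schauder estimates then converts this decay into convergence of $u$ to $u_\infty$ in $C^{k+\alpha}_{-\tau'}$ at a polynomial rate. Your closing paragraph concedes that the coercivity you need ``is not automatic from $Y>0$ alone'' and gestures at the positive mass theorem and the borderline $n=3$ decay, but supplies no actual estimate there; as written, the core analytic step of the theorem is asserted rather than proved.
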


In contrast to the positive Yamabe case, when $Y(M, [g_0])\leq 0$, while the flow still exists for all positive times, it must diverge.
%This result is contained in part (2) of Theorem 1.2 in \cite{ChenWang}:

\begin{Thm}[{\cite[Theorem 1.2 (2)]{ChenWang}}]\label{uconverge}
Let $(M^n,g_0)$ be a $C^{k+\alpha}_{-\tau}$ AF manifold with $k\geq 3$.
%\begin{enumerate}[(1)]
%\item\label{uconverge1} If $Y(M^n,[g_0])>0$, then the Yamabe flow $(M^n,g(t))$ starting from $(M^n,g_0)$ converges uniformly in $C^{k+\alpha}_0$ to the unique $C^{k+\alpha}_{-\tau}$ AF metric $g_\infty\in[g_0]$ as $t\rightarrow\infty$.
If $Y(M^n,[g_0])\leq 0$, then the Yamabe flow $(M^n,g(t))$ starting from $(M^n,g_0)$ does not converge. In particular, $g(t)=u(t)^{\frac{4}{n-2}}g_0$ will fail to remain uniformly equivalent to $g_0$ as $t\rightarrow\infty$, and both  $\|u(t)\|_{L^\infty}$ and the $L^2$ Euclidean-type Sobolev constant of $g(t)$ will tend to positive infinity.
%\end{enumerate}
\end{Thm}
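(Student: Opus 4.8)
\section*{Proof proposal}

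The plan is to argue by contradiction from the monotonicity of the total scalar curvature. Writing $g(t)=u(t)^{\frac{4}{n-2}}g_0$ with $u>0$, the flow \eqref{floweq} is equivalent to the scalar parabolic equation
\[
\partial_t u=-\tfrac{n-2}{4}R_g\,u=-\tfrac{n-2}{4}\,u^{-\frac{4}{n-2}}L_{g_0}u,
\]
where $L_{g_0}:=-\frac{4(n-1)}{n-2}\Delta_{g_0}+R_{g_0}$ is the conformal Laplacian. I would first record the basic energy monotonicity: setting
\[
\mathcal E(t)=\int_M R_g\,dV_g=\int_M u\,L_{g_0}u\,dV_{g_0}=\int_M\Big(\tfrac{4(n-1)}{n-2}|\nabla u|_{g_0}^2+R_{g_0}u^2\Big)\,dV_{g_0},
\]
the self-adjointness of $L_{g_0}$ and the $u$-equation give
\[
\frac{d}{dt}\mathcal E(t)=2\int_M(\partial_t u)\,L_{g_0}u\,dV_{g_0}=-\tfrac{n-2}{2}\int_M u^{-\frac{4}{n-2}}(L_{g_0}u)^2\,dV_{g_0}\le 0,
\]
with equality precisely when $R_g\equiv0$. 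The asymptotically flat decay (with $\tau>1$, or the supplementary hypotheses when $n=3$) guarantees convergence of these integrals and vanishing of the boundary terms at infinity.

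Next I would show that $g(t)$ cannot stay uniformly equivalent to $g_0$. Suppose for contradiction that $\Lambda^{-1}\le u(t)\le\Lambda$ for all $t\ge0$. Then $\mathcal E(t)$ is bounded below (using $R_{g_0}\in L^1$ and the lower bound on $u$), and since it is monotone, $\frac{d}{dt}\mathcal E\in L^1(0,\infty)$. Hence there is a sequence $t_j\to\infty$ along which $\int_M u^{-\frac{4}{n-2}}(L_{g_0}u)^2\,dV_{g_0}\to0$, and the uniform lower bound on $u$ upgrades this to $L_{g_0}u(t_j)\to0$ in $L^2(g_0)$. Parabolic Schauder estimates together with the two-sided bound on $u$ yield a subsequential limit $u_\infty$ with $\Lambda^{-1}\le u_\infty\le\Lambda$ and $L_{g_0}u_\infty=0$, so $g_\infty=u_\infty^{\frac{4}{n-2}}g_0$ is a scalar-flat metric uniformly equivalent to $g_0$, hence itself asymptotically flat. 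But the existence of a positive scalar-flat asymptotically flat metric in $[g_0]$ forces $Y(M,[g_0])>0$: by conformal invariance $Y(M,[g_0])=Y(M,[g_\infty])$, and for the scalar-flat $g_\infty$ the Yamabe quotient reduces to $\frac{4(n-1)}{n-2}$ times the strictly positive Euclidean-type Sobolev constant of $(M,g_\infty)$. This contradicts $Y(M,[g_0])\le0$, so $g(t)$ fails to remain uniformly equivalent to $g_0$.

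To see that the divergence occurs specifically through $\|u(t)\|_{L^\infty}\to\infty$, I would track the volume. Since $\frac{d}{dt}dV_g=-\frac n2 R_g\,dV_g$, we have $\frac{d}{dt}\mathrm{Vol}(g(t))=-\frac n2\mathcal E(t)$. When $Y(M,[g_0])<0$, the energy-decreasing flow should drive $\mathcal E(t)$ below $0$ and keep it there (by monotonicity), whence $\mathrm{Vol}(g(t))=\int_M u^{\frac{2n}{n-2}}\,dV_{g_0}\to\infty$; because asymptotic flatness is preserved (so $u\to1$ near infinity), a finite upper bound on $u$ would cap the volume, forcing $\|u(t)\|_{L^\infty}\to\infty$. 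The borderline case $Y=0$ I would handle by a limiting argument from $Y<0$. For the Sobolev constant I would argue by contradiction again: if the $L^2$ Euclidean-type Sobolev constant $C_S(g(t))$ stayed bounded along some $t_j\to\infty$, then the uniform Sobolev inequality for $g(t_j)$, combined with the lower bound $R_g\ge-\epsilon(t)$, $\epsilon(t)\to0$, coming from the maximum principle applied to $\partial_t R=(n-1)\Delta_g R+R^2$ on the asymptotically flat end, would bound the conformally invariant Yamabe quotient of $g(t_j)$ below by a positive constant, again contradicting $Y(M,[g_0])\le0$; hence $C_S(g(t))\to+\infty$.

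The main obstacle throughout is the noncompactness. The maximum principle for $R$, the compactness and regularity used to extract the scalar-flat limit, and the Sobolev estimate all require quantitative control of the asymptotically flat end that is uniform in $t$, which is where the preservation of asymptotic flatness and the long-time existence theory of \cite{ChenWang} must enter. The most delicate point is separating the genuine blow-up of $\|u(t)\|_{L^\infty}$ and $C_S(g(t))$ from mere nonconvergence: the volume route needs a rigorous proof that $\mathcal E(t)$ becomes and stays negative when $Y<0$, and the Sobolev route needs control, uniform in time, of the region where $R_g<0$, since over a manifold of infinite volume the negative part of the scalar curvature cannot a priori be absorbed by the gradient term.
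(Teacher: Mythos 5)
This theorem is one the paper imports from \cite{ChenWang} rather than re-proves, so there is no internal proof to compare against line by line; judged on its own terms, your argument has genuine gaps, and they all stem from one source: the global integral quantities you build the proof on are not defined under the stated hypotheses. The theorem assumes only that $g_0$ is $C^{k+\alpha}_{-\tau}$ AF with $k\ge 3$; it does \emph{not} assume $R_{g_0}\in L^1$ (that appears only as an extra hypothesis in the $n=3$ statement, Theorem \ref{Ypos3}). Since $u\to 1$ at spatial infinity, your energy $\mathcal{E}(t)=\int_M\bigl(a_n|\nabla u|^2_{g_0}+R_{g_0}u^2\bigr)\,dV_{g_0}$ converges only if $R_{g_0}\in L^1$, which for $R_{g_0}=O(r^{-\tau-2})$ requires roughly $\tau>n-2$ and fails even at the borderline order $\tau=n-2$; the Dirichlet term alone already needs $\tau>(n-2)/2$. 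Moreover the identity $\int u\,L_{g_0}u=\int(a_n|\nabla u|^2+R_{g_0}u^2)$ and the self-adjointness step in computing $\frac{d}{dt}\mathcal{E}$ leave boundary fluxes of size $O(r^{n-2-\tau})$ that need not vanish, again because $u\to 1\neq 0$. The volume argument is in worse shape: $\mathrm{Vol}(g(t))=\int_M u^{\frac{2n}{n-2}}\,dV_{g_0}=+\infty$ for every $t$ on an AF manifold, so $\frac{d}{dt}\mathrm{Vol}=-\frac{n}{2}\mathcal{E}$ equates two undefined quantities, and the inference that ``a finite upper bound on $u$ would cap the volume'' is vacuous; renormalizing does not help, since $u-1$ decays at best like $r^{2-n}$, which is not integrable. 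Thus both your extraction of a scalar-flat limit (which needs $\mathcal{E}$ finite, monotone, and bounded below) and your entire route to $\|u(t)\|_{L^\infty}\to\infty$ collapse.

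Two further gaps. The borderline case $Y=0$ is deferred to ``a limiting argument from $Y<0$'' that is never described and is not available: $Y$ cannot be lowered within the conformal class, and perturbing $g_0$ outside $[g_0]$ changes the flow with no uniform control; this is in fact the delicate case (the present paper spends all of Section \ref{sec_Y=0}, with Harnack inequalities, the comparison solutions $v_b,v_B$, and the compactified Yamabe solution, just to control it). For the Sobolev constant you flag the decisive obstacle yourself ($\int v^2$ cannot be absorbed on an infinite-volume manifold) but do not resolve it; the natural repair is H\"older in the form $\int|R^{-}_{g(t)}|v^2\,dV_{g(t)}\le\|R^{-}_{g(t)}\|_{L^{n/2}(g(t))}\|v\|^2_{L^{2n/(n-2)}(g(t))}$, but that requires proving $\|R^{-}_{g(t)}\|_{L^{n/2}(g(t))}\to 0$, i.e.\ volume control of the set $\{R_{g(t)}<0\}$, which you never establish. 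What \emph{is} sound is the mechanism in your second paragraph: a scalar-flat AF metric uniformly equivalent to $g_0$ does force $Y>0$ via the Euclidean-type Sobolev inequality, and this Cantor--Brill-type characterization is indeed the engine behind the nonconvergence statement in \cite{ChenWang}. But to make the rest rigorous you must abandon global integrals in favor of the pointwise tools this paper uses throughout: the comparison principle (Lemma \ref{max_principle_lem}), the Ecker--Huisken bound $R\ge -1/t$ (Lemma \ref{lem_R_lower}), the monotonicity of $t^{-\frac{n-2}{4}}u$ (Proposition \ref{upper_bd_lem}), and comparison with explicit static solutions built from the conformal compactification.
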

%This gives a complete picture of the Yamabe flow on asymptotically flat manifolds.
%From previous work \cite{ChenWang} we know that if $(M^n,g_0)$ is an asymptotically Euclidean manifold, then the Yamabe flow $\frac{\partial}{\partial t} g=-R g$ with initial condition $g_0=g(0)$ exists for all $t\geq 0$. Moreover, the flow converges when $Y(M^n,[g_0])>0$ and does not converge when $Y(M^n,[g_0])\leq 0$.

Here $Y(M,[g_0])$ is a conformally invariant quantity. Motivated by the definition of the Yamabe constant in the compact case, $Y(M,[g_0])$ is defined as follows:
\begin{align}
Y(M,[g_0]):=\inf_{\substack{v\in C^{\infty}_0(M),\\{v\neq 0}}} \frac{\int_M a_n |\nabla v|_{g_0}^2+R_{g_0} v^2\ dV_{g_0}}{\left(\int |v|^{\frac{2n}{n-2}}\ dV_{g_0}\right)^{\frac{n-2}{n}}},
\end{align}
with $a_n=\frac{4(n-1)}{n-2}$.
This Yamabe constant plays an important role in the prescribed scalar curvature problem on conformal classes of asymptotically flat metrics \cite{CantorBrill,Maxwell,DiltsMaxwell}.

The main goal of this article is to  study the behavior of the flow in this latter case $Y(M^n,[g_0])\leq 0$. More precisely, we prove that even though the Yamabe flow diverges (which in our case is equivalent to saying the solution $u$ blows up as $t\rightarrow \infty$, 
%though it remains bounded away from zero, 
by the proof of \cite[Lemma 3.4]{ChenWang}), the rescaled flow 
$t^{-\frac{n-2}{4}}u(x, t)$ is convergent when $Y(M^n,[g_0])\leq 0$. Working from here, we obtain precise profiles of the blow up behavior of the flow.

Recall that if we write a Yamabe flow as $g(x,t)=u(x,t)^{\frac{4}{n-2}}g_0$, then $u$ satisfies the parabolic equation
\begin{equation}
\frac{\partial}{\partial t} u^{\frac{n+2}{n-2}}=\frac{n+2}{4}(a_n\Delta_{g_0} u-R_{g_0}u).\label{flow_equation}
\end{equation}
%with $a_n=\frac{4(n-1)}{n-2}$. 
Sometimes we will also write $N=\frac{n+2}{n-2}$ for the exponent on the left. To simplify writing the right-hand side, we denote by $L_{g_0}$ the conformal Laplacian
\[L_{g_0}:=-a_n\Delta_{g_0}+R_{g_0}.\]
Throughout what follows, $u$ will always denote a solution of this equation with $\lim_{|x|\rightarrow\infty}u(x,t)=1$.
By \cite[Theorem 1.3]{CZ}, $g(t)$ remains asymptotically flat along the Yamabe flow if $g_0$ is asymptotically flat. 

There are two main results of this paper---one when  $Y(M^n,[g_0])< 0$, and
the other when $Y(M^n,[g_0])= 0$. In both these cases,  the $u(x, t)$ associated with the Yamabe flow $(M^n,g(t))$ blows up at a rate no faster than $ O(t^{\frac{n-2}{4}})$. But the limiting profiles of $\tilde{u}(x, t):=    t^{-\frac{n-2}{4}}u(x, t)$ behave differently. 
%cannot be positive everywhere on $M$, and thus cannot be used as a conformal factor.

\begin{ThmL}\label{rescalednega}%(Main theorem)
If $(M^n,g_0)$ is a $C^{k+\alpha}_{-\tau}$ AF manifold with $k\geq 3$ and $Y(M^n,[g_0])< 0$,
%\begin{enumerate}[(1)]
%\item\label{uconverge1} If $Y(M^n,[g_0])>0$, then the Yamabe flow $(M^n,g(t))$ starting from $(M^n,g_0)$ converges uniformly in $C^{k+\alpha}_0$ to the unique $C^{k+\alpha}_{-\tau}$ AF metric $g_\infty\in[g_0]$ as $t\rightarrow\infty$.
then the Yamabe flow $(M^n,g(t))$ starting from $(M^n,g_0)$ blows up at the rate $u(x, t)= O(t^{\frac{n-2}{4}})$.
Moreover, $\tilde{u}(x, t):=    t^{-\frac{n-2}{4}}u(x, t)$ converges in $C^{k,\alpha'}_{loc}$ for any $\alpha'<\alpha$ to a limiting function $\tilde{u}_\infty>0$, where $\tilde{u}_\infty$ is the unique solution to the equation of prescribed constant scalar curvature $-1$:
\begin{align}
\begin{cases}
-a_n\Delta_{g_0} \tilde{u}_\infty+R_{g_0} \tilde{u}_\infty=-\tilde{u}_\infty^N, \label{eq_steady_sec1}\\
\tilde{u}_\infty \rightarrow 0.
\end{cases}
\end{align}
Moreover, $\tilde{u}_\infty(x)$ satisfies the sharp spatial decay $\tilde{u}_\infty(x)= O(|x|^{2-n})$, and $\tilde{u}_\infty^{\frac{4}{n-2}}g_0$ extends to the unique %complete  %{\color{red}-- is it derived from $\tilde{u}_\infty \rightarrow 0$?} 
$-1$ constant scalar curvature metric on the compactified space $\ol{M}$.

%\end{enumerate}
\end{ThmL}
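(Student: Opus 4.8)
The plan is to recognize the claimed limit as the self-similar profile of the flow and then to trap the flow between time-translated copies of this profile. Substituting the ansatz $u(x,t)=t^{\frac{n-2}{4}}\phi(x)$ into \eqref{flow_equation} and using $N\cdot\frac{n-2}{4}=\frac{n+2}{4}$, the time derivative contributes $\frac{n+2}{4}t^{\frac{n-2}{4}}\phi^N$ while the right-hand side equals $\frac{n+2}{4}t^{\frac{n-2}{4}}(a_n\Delta_{g_0}\phi-R_{g_0}\phi)$; cancelling the common power of $t$ shows that a self-similar solution exists precisely when $\phi$ solves $-a_n\Delta_{g_0}\phi+R_{g_0}\phi=-\phi^N$. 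This identifies $\tilde u_\infty$ with the self-similar profile and explains the exponent $\frac{n-2}{4}$ in the blow-up rate. By time-translation invariance of the autonomous equation \eqref{flow_equation}, each $(t+T)^{\frac{n-2}{4}}\tilde u_\infty$ is again an exact solution, and these will serve as the fundamental barriers.

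First I would establish existence, uniqueness, and the sharp decay of $\tilde u_\infty$. Existence and uniqueness of a positive solution of \eqref{eq_steady_sec1} decaying to $0$ is equivalent, via $\bar g=\tilde u_\infty^{4/(n-2)}g_0$ and the conformal transformation law $L_{g_0}\tilde u_\infty=R_{\bar g}\tilde u_\infty^N$, to solving the prescribed scalar curvature $-1$ problem in the conformal class $[g_0]$; the hypothesis $Y(M^n,[g_0])<0$ is exactly what makes this solvable, and I would obtain the solution either by the monotone sub/supersolution method directly on $M$ or by reducing to the negative-case Yamabe problem on the conformal compactification $\ol M$. Uniqueness is the classical maximum principle argument for negative constant scalar curvature: writing one solution metric as a conformal factor $\phi^{4/(n-2)}$ times the other reduces the comparison to $-a_n\Delta\phi=\phi-\phi^N$, and evaluating at the maximum and the minimum of $\phi$ forces $\phi\equiv 1$. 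The decay $\tilde u_\infty=O(|x|^{2-n})$ then comes from the asymptotic analysis of $L_{g_0}$ near infinity, where the equation is a decaying perturbation of $-a_n\Delta\tilde u_\infty=0$, so $\tilde u_\infty$ matches the unique decaying homogeneous harmonic function $\sim c\,|x|^{2-n}$; positivity of $\tilde u_\infty$ forces $c>0$, giving sharpness, and the corresponding $|x|^{-4}$ behavior of the conformal factor is what lets $\bar g$ extend smoothly across the point at infinity to a $-1$ constant scalar curvature metric on $\ol M$, whose uniqueness is inherited from the compact negative Yamabe uniqueness.

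For the convergence I would pass to the similarity variable $\tilde u=t^{-\frac{n-2}{4}}u$ with logarithmic time $s=\log t$, under which \eqref{flow_equation} becomes $N\tilde u^{N-1}\partial_s\tilde u=\frac{n+2}{4}\bigl(a_n\Delta_{g_0}\tilde u-R_{g_0}\tilde u-\tilde u^N\bigr)$, whose stationary solutions are exactly the profiles solving \eqref{eq_steady_sec1}. The barriers $(t+T)^{\frac{n-2}{4}}\tilde u_\infty$ give, after rescaling, two-sided bounds $c\,\tilde u_\infty\le \tilde u(\cdot,t)\le C\,\tilde u_\infty$ on every compact set, which in particular yields the upper rate $u=O(t^{\frac{n-2}{4}})$ and keeps $\tilde u$ bounded away from $0$ and $\infty$ locally. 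Parabolic regularity then makes the family $\{\tilde u(\cdot,t)\}$ precompact in $C^{k,\alpha'}_{loc}$; any subsequential limit is a positive $C^{k,\alpha'}_{loc}$ solution of \eqref{eq_steady_sec1}, hence equals $\tilde u_\infty$ by the uniqueness established above. Upgrading subsequential to full convergence uses the gradient/Lyapunov structure of the normalized Yamabe flow to show $\partial_s\tilde u\to 0$, so that every limit point is stationary and therefore $\tilde u_\infty$; combined with the two-sided bounds this gives $\tilde u\to\tilde u_\infty$ in $C^{k,\alpha'}_{loc}$.

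The main obstacle I anticipate is the mismatch of boundary conditions at infinity: the solution satisfies $u\to 1$ while every self-similar barrier satisfies $\tilde u_\infty\to 0$, so the comparison principle cannot be applied naively on all of $M$. Handling this requires running the comparison on an exhaustion by large balls and controlling the boundary contribution there, using that after the $t^{-\frac{n-2}{4}}$ rescaling the fixed asymptotic value $1$ contributes $t^{-\frac{n-2}{4}}\to 0$ and hence does not affect the locally uniform limit; equivalently one adds a correction term accounting for the $O(1)$ value at infinity and checks the sub/supersolution inequalities for the corrected barriers. Verifying these barrier inequalities carefully—together with managing the degenerate fast-diffusion nature of the $\partial_t u^N$ term in the maximum principle and in the regularity theory—is where the real work lies, whereas the self-similar reduction and the uniqueness of $\tilde u_\infty$ are comparatively soft.
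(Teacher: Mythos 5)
Your lower barrier works for the same reason the paper's does: the self-similar solution $(t|Y|)^{\frac{n-2}{4}}u_0$ tends to $0$ at spatial infinity while $u\to 1$, so the parabolic comparison principle applies with the inequality pointing the right way at the boundary. But the other half of your trapping argument---the upper bound $\tilde u\le C\,\tilde u_\infty$, from which you draw both the blow-up rate $u=O(t^{\frac{n-2}{4}})$ and the local compactness---has a genuine gap. Every time-translated or rescaled copy of the self-similar profile decays to $0$ at spatial infinity and hence lies \emph{below} $u$ near infinity for every $t$, so comparison can never force $u$ beneath it. You flag this as the main obstacle and propose to repair it by comparing on an exhaustion by large balls, but that repair is circular: to apply the maximum principle on $B_R\times[t_0,\infty)$ you must dominate $u$ on the lateral boundary $\partial B_R\times[t_0,\infty)$ \emph{for all time}, and an all-time upper bound on $u$ at a fixed location is exactly the estimate being proven. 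The observation that the rescaled boundary value $t^{-\frac{n-2}{4}}\cdot 1\to 0$ does not help, because the comparison (or the corrected supersolution inequality) must be verified before the limit is taken, and nothing a priori prevents $u$ on $\partial B_R$ from growing faster than $t^{\frac{n-2}{4}}$.

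The paper's upper bound comes from a mechanism absent from your outline: Hamilton's scalar curvature estimate. Applying the Ecker--Huisken maximum principle to $\partial_t R=(n-1)\Delta R+R^2$ gives $R(g(t))\ge -1/t$, and integrating $\partial_t u=-\frac{n-2}{4}Ru$ pointwise shows that $\tilde u(x,t)=t^{-\frac{n-2}{4}}u(x,t)$ is \emph{nonincreasing in $t$}. This single estimate replaces three pieces of your plan at once: it yields the rate $u=O(t^{\frac{n-2}{4}})$ with no boundary issues; it produces a monotone pointwise limit $\tilde u_\infty$, so full (not merely subsequential) convergence is automatic and no Lyapunov or gradient-flow structure is needed (which would be delicate anyway, since the relevant Yamabe-type energy is not obviously finite or monotone on these noncompact manifolds); and monotonicity together with $u\to 1$ at infinity gives the decay $\tilde u_\infty\to 0$. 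Separately, your uniqueness step (``evaluate at the maximum and minimum of $\phi$'') is the compact-manifold argument; here the limit solves the equation only on the noncompact $M$, and after compactification the metric is merely $W^{2,p}$ at the added point $q$, so the maximum could a priori be attained at $q$ or not at all. The paper rules this out with a Green's function barrier near $q$, using the decay $\tilde u_\infty/u_0=o(d(\cdot,q)^{2-n})$; an argument of this kind (or the weak-solution truncation argument of Proposition \ref{prop_Yamabe_unique}) is needed to make your ``soft'' uniqueness step rigorous.
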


When $Y(M^n,[g_0])= 0$, the rescaled function $\tilde{u}(x,t)$ once again has a limit.
%the Yamabe flow $(M^n,g(t))$ blows up at the rate of $u(x, t)= O(t^{\frac{n-2}{4}})$ as well. 
But in this case the limiting function vanishes on $M^n$.
\begin{ThmL}\label{rescaledzero}%(Main theorem)
If $(M^n,g_0)$ is a $C^{k+\alpha}_{-\tau}$ AF manifold with $k\geq 3$ and $Y(M^n,[g_0])= 0$,
%\begin{enumerate}[(1)]
%\item\label{uconverge1} If $Y(M^n,[g_0])>0$, then the Yamabe flow $(M^n,g(t))$ starting from $(M^n,g_0)$ converges uniformly in $C^{k+\alpha}_0$ to the unique $C^{k+\alpha}_{-\tau}$ AF metric $g_\infty\in[g_0]$ as $t\rightarrow\infty$.
then the Yamabe flow $(M^n,g(t))$ starting from $(M^n,g_0)$ satisfies $u(x, t)= o(t^{\frac{n-2}{4}})$. 
%If $Y(M^n,[g_0])= 0$, then the Yamabe flow $(M^n,g(t))$ blows up at the rate of $u(x, t)= O(t^{\frac{n-2}{4}})$. Moreover, $\tilde{u}(x, t):=  t^{-\frac{n-2}{4}}u(x, t)$ converges to a limiting function $\tilde{u}_\infty \equiv 0$ on $M$. 
%, where $\tilde{u}_\infty\geq 0$ and it must vanish somewhere on $M$.
%\end{enumerate}
\end{ThmL}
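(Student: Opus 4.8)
The plan is to run the rescaled-flow analysis from the proof of Theorem~\ref{rescalednega} and to invoke the sign of $Y$ only at the final identification of the limit. Writing $s=\log t$ and $\tilde u=t^{-\frac{n-2}{4}}u$, a direct computation turns \eqref{flow_equation} into the autonomous equation
\[
N\tilde u^{N-1}\partial_s\tilde u=-\tfrac{n+2}{4}\bigl(L_{g_0}\tilde u+\tilde u^N\bigr),
\]
whose stationary points are exactly the solutions of the steady-state equation \eqref{eq_steady_sec1}. The upper blow-up bound $u=O(t^{\frac{n-2}{4}})$, which holds throughout the regime $Y\le 0$ and does not use a strict sign, says precisely that $\tilde u$ is uniformly bounded; feeding this into interior parabolic Schauder estimates for the rescaled equation gives uniform $C^{k,\alpha'}_{loc}$ bounds, so the family $\{\tilde u(\cdot,s)\}$ is precompact in $C^{k,\alpha'}_{loc}$ and has a nonempty $\omega$-limit set.

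Next I would show that every element of the $\omega$-limit set is a nonnegative solution of \eqref{eq_steady_sec1}. The rescaled flow is gradient-like for the energy
\[
E(\tilde u)=\tfrac12\int_M\bigl(a_n|\nabla\tilde u|_{g_0}^2+R_{g_0}\tilde u^2\bigr)\,dV_{g_0}+\tfrac{1}{N+1}\int_M\tilde u^{N+1}\,dV_{g_0},
\]
suitably renormalized to account for the nonzero asymptotic value $\tilde u\to t^{-\frac{n-2}{4}}$ at spatial infinity, along which the $L^2$-gradient of $E$ is $L_{g_0}\tilde u+\tilde u^N$ and $\frac{d}{ds}E=-\frac{n+2}{4N}\int_M\tilde u^{1-N}(L_{g_0}\tilde u+\tilde u^N)^2\,dV_{g_0}\le 0$. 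Monotonicity of $E$, the dissipation identity, and a LaSalle-type invariance argument then force every subsequential $C^{k,\alpha'}_{loc}$ limit $\tilde u_\infty\ge 0$ to satisfy $L_{g_0}\tilde u_\infty+\tilde u_\infty^N=0$ with $\tilde u_\infty\to 0$ at infinity, exactly as in the proof of Theorem~\ref{rescalednega}.

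The only new ingredient, and the point where $Y=0$ enters, is the claim that \eqref{eq_steady_sec1} has no nontrivial nonnegative solution when $Y=0$. If $\tilde u_\infty\ge 0$, $\tilde u_\infty\not\equiv 0$ solved it, the strong maximum principle would give $\tilde u_\infty>0$, and pairing the equation with $\tilde u_\infty$ and integrating by parts—where the decay $\tilde u_\infty=O(|x|^{2-n})$ makes the boundary terms at infinity vanish and all integrals converge—would yield
\[
\int_M\bigl(a_n|\nabla\tilde u_\infty|_{g_0}^2+R_{g_0}\tilde u_\infty^2\bigr)\,dV_{g_0}=-\int_M\tilde u_\infty^{N+1}\,dV_{g_0}<0.
\]
But $Y(M^n,[g_0])=0$ means the left-hand side is $\ge 0$ for all compactly supported test functions, and the decay of $\tilde u_\infty$ lets one approximate it by cutoffs without raising the quadratic form in the limit; hence the left-hand side is $\ge 0$, a contradiction. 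Thus the $\omega$-limit set is exactly $\{0\}$, and precompactness upgrades this to $\tilde u(\cdot,s)\to 0$ in $C^{k,\alpha'}_{loc}$, i.e. $u=o(t^{\frac{n-2}{4}})$.

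I expect the main obstacle to lie entirely in the noncompactness. The naive energy $E$ is infinite because $\tilde u$ does not decay at spatial infinity, so $E$ must be renormalized by subtracting the contribution of the asymptotic value, and one must prove enough uniform decay of $\tilde u$ near the ends to make the dissipation integral finite, to justify the integration by parts, and—crucially—to rule out energy or mass escaping to infinity, so that the $C^{k,\alpha'}_{loc}$ limit genuinely solves \eqref{eq_steady_sec1} with the stated decay rather than losing information in the ends. By contrast, the nonexistence step for $Y=0$ is a short and robust integration-by-parts computation, and it is precisely the structural reason the $Y=0$ profile must vanish where the $Y<0$ profile of Theorem~\ref{rescalednega} is strictly positive.
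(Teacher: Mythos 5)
Your strategy is genuinely different from the paper's (a gradient-flow/LaSalle argument plus a quadratic-form contradiction, versus the paper's weak limit equation plus a Green's-function comparison on the compactification), but as written it breaks down at exactly the points where $Y=0$ makes the problem hard. The first gap is the compactness step. Writing the rescaled equation as
\[
\partial_s\tilde u=\tfrac{n+2}{4N}\,\tilde u^{1-N}\bigl(a_n\Delta_{g_0}\tilde u-R_{g_0}\tilde u-\tilde u^{N}\bigr),
\]
the diffusion coefficient is proportional to $\tilde u^{1-N}=\tilde u^{-4/(n-2)}$, so Krylov--Safonov and Schauder estimates require a positive \emph{lower} bound on $\tilde u$, not just the upper bound $u=O(t^{\frac{n-2}{4}})$. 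In the case $Y<0$ the paper has precisely such a bound (Lemma \ref{lower_bd_lem}), and Proposition \ref{prop_utildeconverge} uses it; when $Y=0$ the very statement you are proving says $\tilde u\to 0$, so the coefficient blows up and no uniform-in-$s$ parabolic estimate is available. The paper avoids this entirely: since $\tilde u$ is pointwise nonincreasing (Proposition \ref{upper_bd_lem}), it derives the limit equation for $\tilde u_\infty$ only \emph{weakly}, by integrating \eqref{flow_equation} in space--time against a fixed test function and using dominated convergence (Lemma \ref{lem_Y0_equation}); that same monotonicity is what substitutes for your Lyapunov functional.

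The second gap is that your energy and its dissipation are genuinely infinite, not just in need of polish: at each fixed $t$ one has $\tilde u\to t^{-\frac{n-2}{4}}>0$ at spatial infinity, so $\int_M\tilde u^{N+1}\,dV_{g_0}=\infty$, and the dissipation density $\tilde u^{1-N}(L_{g_0}\tilde u+\tilde u^N)^2$ tends to the positive constant $t^{-n/2}$, integrated over infinite volume. You name the required renormalization but never construct it, and it is the crux of your scheme rather than a technicality. Finally, your concluding integration by parts invokes the rate $\tilde u_\infty=O(|x|^{2-n})$, but nothing in your argument supplies a rate: the argument of Lemma \ref{lem_compactify} gives only $\tilde u_\infty\to 0$. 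Without a rate, the cutoff errors both in the integration by parts and in approximating $\tilde u_\infty$ in the Yamabe quotient are of size $\sup_{\{\ell\le |z|\le 2\ell\}}\tilde u_\infty^2\cdot O(\ell^{n-2})=o(\ell^{n-2})$, which need not vanish. The paper needs no rate: it passes to the compactification $(\ol{M},\hat g)$ with $\hat R=Y=0$, sets $v=\tilde u_\infty/u_0$ so that $-a_n\hat\Delta v=-v^{N}\le 0$ away from the added point $q$, and uses the Green's function of Proposition \ref{prop_Green} as a barrier near $q$ to force the maximum of $v$ to be attained away from $q$; evaluating the differential inequality at that interior maximum gives $v\equiv 0$. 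To salvage your quadratic-form contradiction you would first have to prove a decay rate for nonnegative solutions of $L_{g_0}\tilde u_\infty=-\tilde u_\infty^{N}$ with $\tilde u_\infty\to 0$, which is an elliptic barrier argument of essentially the same nature as the step it would replace.
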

%From the above theorem, $u(x, t)/ t^{\frac{n-2}{4}}$ tends to $0$. 
Thus in order to describe the blow-up profile of the limit, we need a more delicate estimate. We prove in the following that the limit of the flow $u(x,t)$, after being renormalized by its maximum value on a compact set $K$ at $t$ is convergent. Moreover, such a limit is the positive canonical solution (up to a multiplicative constant) of a prescribed zero scalar curvature equation. 
\begin{ThmL}\label{rescaledzerolimit}%(Main theorem)
If $(M^n,g_0)$ is a $C^{k+\alpha}_{-\tau}$ AF manifold with $k\geq 3$ and $Y(M^n,[g_0])= 0$,
then for the Yamabe flow $(M^n,g(t))$ starting from $(M^n,g_0)$ and any fixed compact set $K\subset M^n$ we have that $\frac{u(x,t)}{\max_{x\in K}u(x,t)}$ converges in $C^{k,\alpha'}_{loc}$ for any $\alpha'<\alpha$ to the unique positive solution $w(x)$ on $(M^n,g_0)$ which satisfies
\begin{align}
-a_n \Delta_{g_0} w + R_{g_0} w =0,\quad \max_{x\in K}w(x)=1.
\end{align}
%There exists a subsequence of $\frac{u(x,t)}{\max_{x\in M^n}u(x,t)}$ which converges in $C^{\beta'}_{loc}$ for some $\beta'>0$ to a limit $w(x)>0$ which is  
%For any compact set $K\subset M$, there exists a sequence $\{t_i\}\rightarrow \infty$ and points $x(t_i) \in K$ such that the renormalized sequence
%$\frac{u(x, t_i)}{u(x(t_i), t_i)}$ has a limit $\phi(x)>0$ as $t_i\rightarrow \infty$. $\phi$ is 
% the unique positive solution $(M^n, g_0)$ of 
% \begin{align}
% -a_n \Delta_{g_0} w + R_{g_0} w =0,
% \end{align}
% up to a multiplicative constant. 
Moreover, $w(x)$ satisfies the sharp spatial decay $w(x)= O(|x|^{2-n})$, and $w^{\frac{4}{n-2}}g_0$ extends to the constant zero scalar curvature metric on the compactified space $\ol{M}$, which is unique up to scaling.
\end{ThmL}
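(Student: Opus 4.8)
The plan is to establish three things for Theorem C: convergence of the normalized flow $u(x,t)/\max_K u(x,t)$ to a solution $w$ of the linear equation $L_{g_0}w=0$, uniqueness of the positive decaying solution $w$, and the sharp decay rate together with the compactification statement. The structure parallels Theorem A, but the key difference is that in the $Y=0$ case the limiting equation is the \emph{homogeneous linear} equation $L_{g_0}w=0$ rather than the nonlinear $L_{g_0}\tilde u_\infty=-\tilde u_\infty^N$; this is forced by Theorem B, which tells us that $u=o(t^{(n-2)/4})$, so the naive rescaling $\tilde u=t^{-(n-2)/4}u$ collapses to zero and cannot see a nonlinear profile. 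Dividing by $\max_K u(\cdot,t)$ is the correct normalization precisely because it rescales the solution to have fixed size on $K$ while killing the nonlinear term in the limit.

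**First I would** set $v(x,t):=u(x,t)/\max_K u(\cdot,t)$ and derive the equation it satisfies. Since $u$ solves $\partial_t u^N=\frac{n+2}{4}(a_n\Delta_{g_0}u-R_{g_0}u)=-\frac{n+2}{4}L_{g_0}u$, writing $m(t):=\max_K u(\cdot,t)$ and $\tau$ a rescaled time, the quantity $v$ satisfies a parabolic equation whose elliptic part is $-\frac{n+2}{4}L_{g_0}v$ and whose remaining terms carry factors of $m(t)^{N-1}=m(t)^{4/(n-2)}\to\infty$ and logarithmic derivatives $m'(t)/m(t)$. The heart of the argument is to show these time-derivative terms vanish in an appropriate limit, i.e. that $v(\cdot,t)$ becomes \emph{stationary} for the linear operator $L_{g_0}$. **I would** extract local uniform bounds on $v$ (it equals $1$ at its max on $K$, and is uniformly bounded above and below on compact sets by a Harnack-type argument for the flow, using positivity of $u$ and the decay-to-$1$ normalization at infinity) and then use parabolic Schauder estimates to upgrade to $C^{k,\alpha'}_{loc}$ bounds, giving subsequential convergence along times $t_j\to\infty$ to a limit $w\geq 0$ with $\max_K w=1$, hence $w\not\equiv 0$. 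A maximum-principle/strong-maximum-principle argument then forces $L_{g_0}w=0$ and $w>0$.

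**The main obstacle** I expect is twofold: first, showing that the \emph{full} limit exists (not merely subsequential), which requires a monotonicity or uniqueness input — here the uniqueness of the positive decaying solution $w$ to $L_{g_0}w=0$ is decisive, since if every subsequential limit is the same normalized $w$, the full limit follows. Uniqueness itself should come from the conformal invariance and the $Y=0$ structure: any two positive solutions $w_1,w_2$ of $L_{g_0}w=0$ decaying at infinity must be proportional, because their quotient $w_1/w_2$ satisfies an elliptic equation with no zeroth-order term and tends to a constant at infinity, so the maximum principle forces it to be constant (this is where $Y=0$, guaranteeing the operator $L_{g_0}$ has trivial kernel among decaying-to-zero functions only up to scaling, is used). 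Second, pinning down the \emph{sharp} decay $w(x)=O(|x|^{2-n})$ requires analyzing $L_{g_0}w=0$ near infinity where $R_{g_0}=O(|x|^{-\tau-2})$ is a decaying perturbation of $-a_n\Delta$; I would compare $w$ with the Green's function of the flat Laplacian, using that the leading-order behavior of a positive harmonic-type function decaying at infinity on an AF end is $c|x|^{2-n}$, and a barrier argument to rule out slower decay.

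**Finally**, to obtain the compactification statement I would interpret $w^{4/(n-2)}g_0$ conformally: the equation $L_{g_0}w=0$ is exactly the statement that the scalar curvature of $w^{4/(n-2)}g_0$ vanishes, and the sharp decay $w=O(|x|^{2-n})$ is precisely the rate at which the conformal factor closes up the asymptotically flat end to a point, yielding a $C^{k,\alpha}$ metric on the one-point compactification $\ol M$ with $R\equiv 0$; uniqueness up to scaling on $\ol M$ transfers from the uniqueness of $w$ established above. I would remark that this matches the $Y=0$ threshold phenomenology: the compactified space carries a scalar-flat (rather than strictly negative) metric, consistent with the borderline Yamabe constant.
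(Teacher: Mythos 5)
Your overall outline (normalize by $\max_K u$, get local compactness, identify the limit via a uniqueness theorem, then upgrade subsequential to full convergence) matches the paper's, but two of your key steps have genuine gaps.

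First, the identification of the limiting equation. Passing to the limit in
\begin{equation*}
-a_n\Delta_{g_0}\tfrac{u}{\max_K u}+R_{g_0}\tfrac{u}{\max_K u}
=R_{g(t)}\,u^{\frac{4}{n-2}}\,\tfrac{u}{\max_K u},
\end{equation*}
the only available control on the right-hand side is one-sided: $R_{g(t)}\ge -1/t$ (Lemma \ref{lem_R_lower}) together with $u=o(t^{\frac{n-2}{4}})$ shows the right side is bounded below by a quantity tending to $0$, but there is \emph{no} matching upper bound on $R_{g(t)}$ along the flow (ODE comparison for $\frac{\partial}{\partial t}R=(n-1)\Delta R+R^2$ blows up in finite time whenever $\sup R>0$, so it gives nothing useful). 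Hence your claim that a maximum-principle argument "forces $L_{g_0}w=0$" does not go through: the limit is a priori only a supersolution, $-a_n\Delta_{g_0}w+R_{g_0}w\ge 0$, which is exactly what the paper proves in Proposition \ref{prop_urho_holder}. Your uniqueness step is also flawed as stated: the quotient argument needs the limit $w$ to decay at infinity, indeed to have sharp asymptotics $w\sim c|x|^{2-n}$ so that the quotient "tends to a constant," but $C^{k,\alpha'}_{loc}$ subconvergence gives no information whatsoever about $w$ near infinity, and you propose to prove the sharp decay only \emph{after} uniqueness. The paper closes both gaps simultaneously with the Liouville-type Proposition \ref{Prop_uniqueness}: \emph{any} positive $W^{1,2}_{loc}$ supersolution of $L_{g_0}w\ge 0$ equals $cu_0$, proved by writing $w=\psi u_0$, testing against $\varphi^2\psi^{-1}u_0$, and running a Caccioppoli/cutoff argument in which only the known decay $u_0=O(|z|^{2-n})$ enters — no decay of $w$ is assumed. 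The sharp decay and the compactification statement then come for free from $w=cu_0$.

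Second, the compactness step. The uniform two-sided bounds on $\frac{u}{\max_K u}$ on compact sets cannot be obtained just from "positivity and the decay-to-$1$ normalization": any Harnack inequality here needs the zeroth-order coefficient $R_{g(t)}u^{\frac{4}{n-2}}$ to be bounded on both sides, and (as above) the upper bound on $R_{g(t)}$ is missing; moreover you never relate $\max_M u$ (or $\sup_\Omega u$ on larger compacts) to $\max_K u$. The paper's device is Lemma \ref{lem_grho} (Dilts--Maxwell): conformally change $g_0$ to $g_{\rho,0}$ whose scalar curvature is nonpositive and compactly supported in $K$, run the auxiliary Yamabe flow $v$ from $g_{\rho,0}$, for which Ecker--Huisken gives $-1/t\le R_{g_\rho}\le 0$ (Lemma \ref{lem_R_bounds}), sandwich $u$ between rescalings $v_b\le u\le v_B$ via the comparison principle (Lemma \ref{lem_uv_bound}), deduce the Harnack inequality (Proposition \ref{prop_harnack}) from the resulting \emph{bounded-coefficient} elliptic equation, and show the maximum of $v$ is attained in $K$ because $v-1$ is subharmonic outside $K$ (Lemma \ref{lem_max_compact}). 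Some substitute for this auxiliary construction is needed; without it your bounds, and hence the parabolic Schauder estimates and subconvergence, are unjustified.
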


\begin{Rem}
It is worth noting that it is unclear whether $\max_{x\in M}u(x,t)$ may always be attained on some fixed compact set. However, given any compact set $K$, we can construct auxiliary functions $v_b(x, t)$ and $v_B(x, t)$ to bound $u(x, t)$ from above and below. Moreover $v_b$ and $v_B$ both take their maximum values in $M$ on $K$. These maximum values are suitable for use in the renormalization, and $\max_{x\in M}u(x,t)$ will be no larger than a fixed constant multiple of $\max_{x\in K}u(x,t)$.
\end{Rem}

% \begin{Rem}
% %It is still open if $\frac{u(x, t)}{u(x(t), t)}$ as $t\rightarrow \infty$ has a positive limit or not. 
% %We also wonder for different time sequences, if the limits are the same. (From Theorem \ref{rescaledzerolimit}, we only know they are unique up to a constant.)
% %different sequential limits tend to the same limit function or not. 
% \end{Rem}
\subsection{Organization of the article}
The organization of the article is as follows:
In Section \ref{sec_prelim} we start by recalling some preliminaries for the Yamabe flow on AF manifolds as well as properties of certain compactifications of manifolds with $Y(M^n,[g_0])\leq 0$. In Section \ref{sec_Y<0} we discuss the rescaled convergence of Yamabe flows starting from AF manifolds with $Y(M^n,[g_0])<0$ and prove Theorem \ref{rescalednega}. In Section \ref{sec_Y=0} we discuss why the same rescaling does not give a nontrivial convergence result when $Y(M^n,[g_0])=0$, proving Theorem \ref{rescaledzero}, and then describe another rescaling which does yield convergence to a smooth positive function, and prove Theorem \ref{rescaledzerolimit}.

%In Section 3 and 4 we construct the rescaled flow and proved the convergence of this rescaled flow. 
%In Section 5 we study the compactification of the metric and thus proved the that the limiting metric of the rescaled flow proved in the 
%previous section is indeed the unique compactification metric of the manifold when $Y(M^n,[g_0])<0$.
%Finally in Section 6, we discuss what happens when $Y(M^n,[g_0])=0$.

\section{Preliminaries}\label{sec_prelim}

After recalling some relevant definitions and notation, we describe in this section some properties which hold for any Yamabe flow starting from an asymptotically flat manifolds. In the last part we discuss the existence of Yamabe metrics on certain compactifications of asymptotically flat manifolds with $Y(M^n,[g_0])\leq 0$.

\subsection{Asymptotically flat manifolds}\label{subsec_def}

Here we recall as in \cite{ChenWang} some standard function spaces and related definitions used in the analysis and definition of asymptotically flat (AF) manifolds.  See for instance \cite{Bartnik86,DiltsMaxwell}.

\begin{Def}\label{funcspaces}
Let $M^n$ be a complete differentiable manifold such that there exists a compact $K\subset M^n$ and a diffeomorphism $\Phi:M^n\backslash K\rightarrow \mb{R}^n\backslash B_{R_0}(0)$, for some $R_0>0$. Let $r\geq 1$ be a smooth function on $M^n$ that agrees under the identification $\Phi$ with the Euclidean radial coordinate $|x|$ in a neighborhood of infinity, and let $\hat{g}$ be a smooth metric on $M^n$ which is equal to the Euclidean metric in a neighborhood of infinity under the identification $\Phi$. Then with all quantities below computed with respect to the metric $\hat{g}$, we have the following function spaces:

The weighted Lebesgue spaces $L^q_\beta(M)$, for $q\geq 1$ and weight $\beta\in\mb{R}$, consist of those locally integrable functions on $M$ such that the following respective norms are finite:
\begin{align}\notag
\|v\|_{L_{\beta}^{q}(M)}=\left\{\begin{array}{ll}{\left(\int_{M}|v|^{q} r^{-\beta q-n} d x\right)^{\frac{1}{q}},} & {q<\infty}, \\ {\operatorname{ess} \sup _{M}\left(r^{-\beta}|v|\right),} & {q=\infty}.\end{array}\right. 
\end{align}

The weighted Sobolev spaces $W^{k,q}_\beta(M)$ are then defined in the usual way with the norms
\begin{align}\notag
\|v\|_{W_{\beta}^{k, q}(M)}=\sum_{j=0}^{k}\left\|D_{x}^{j} v\right\|_{L_{\beta-j}^{q}(M)}.
\end{align}

The weighted $C^k$ spaces $C^k_\beta(M)$ consist of the $C^k$ functions for which the following respective norms are finite:
\begin{align}\notag
\|v\|_{C_{\beta}^{k}(M)}=\sum_{j=0}^{k} \sup _{M} r^{-\beta+j}\left|D_{x}^{j} v\right|.
\end{align}

The weighted H\"{o}lder spaces $C_\beta^{k+\alpha}(M)$, $\alpha\in(0,1)$, consist of those $v\in C_\beta^k(M)$ for which the following respective norms are finite:
\begin{align}\notag
\|v\|_{C_{\beta}^{k+\alpha}(M)}=\|v\|_{C_{\beta}^{k}(M)}+\sup _{x \neq y \in M} \min (r(x), r(y))^{-\beta+k+\alpha} \frac{\left|D_{x}^{k} v(x)-D_{x}^{k} v(y)\right|}{d(x,y)^{\alpha}}.
\end{align}
\end{Def}

% \begin{Rem}
% The function spaces defined above are independent of the choices of $\hat{g}$ and $r$. In fact, different choices of the metric $\hat{g}$ and the positive function $r$ will produce equivalent norms. Since $\hat{g}$ agrees with the Euclidean metric in a neighborhood of infinity, we will often use $\delta_{ij}$ to denote a choice of metric $\hat{g}$.
% \end{Rem}

% We can now define our precise notions of asymptotically flat metrics. 
An asymptotically flat manifold is then a smooth manifold with an asymptotically flat metric.

\begin{Def}[Asymptotically flat metrics]
Given $M^n$ as in Definition \ref{funcspaces}, a metric $g$ is said to be a $W^{k,q}_{-\tau}$ (respectively $C^k_{-\tau}$, $C^{k+\alpha}_{-\tau}$) asymptotically flat (AF) metric if $\tau>0$ and
\begin{align}
g-\hat{g}\in W^{k,q}_{-\tau}(M)\quad\text{(respectively $C^k_{-\tau}(M)$, $C^{k+\alpha}_{-\tau}(M)$)}.
\end{align}
The number $\tau>0$ is called the order of the asymptotically flat metric.
\end{Def}

\subsection{Comparison principle}

We wish to be able to compare solutions of the parabolic equation \eqref{flow_equation}. The proof of the result below follows some of the arguments in \cite[Lemma 1.4]{Schulz}.

\begin{Lem}\label{max_principle_lem}
Let $u_1,u_2$ be two positive solutions of \eqref{flow_equation} for all $t\geq 0$. Suppose moreover that
\begin{enumerate}[(1)]
\item For all $T>0$, there exists $C_T>0$ such that 
\[0<C_T^{-1}\leq u_1,u_2\leq C_T\quad\text{on }M\times[0,T].\]
\item\label{max_cond2} For all $t\geq0$,
\[\lim_{|x|\rightarrow\infty}u_1(x,t)<\lim_{|x|\rightarrow\infty}u_2(x,t),\]
and these limits are achieved uniformly in space on $[0,T]$.
\item\label{max_cond3} We have $u_1(x,0)\leq u_2(x,0)$.
\end{enumerate}
Then for all $t\geq 0$ we have $u_1(x,t)\leq u_2(x,t)$.
\end{Lem}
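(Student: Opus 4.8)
The plan is to recast the comparison as a weak maximum principle for a \emph{linear} parabolic equation satisfied by the difference $w := u_1 - u_2$, after localizing to a large compact region so that the noncompactness of $M$ does no harm. First I would fix $T>0$ and reduce to proving $u_1 \le u_2$ on $M \times [0,T]$. Using hypothesis \eqref{max_cond2}---the strict inequality of the limits at infinity together with their uniformity in $t \in [0,T]$---I would extract constants $\delta > 0$ and $R_1 > 0$ with
\[
w(x,t) = u_1(x,t) - u_2(x,t) \le -\delta < 0 \qquad \text{whenever } r(x) \ge R_1,\ t \in [0,T].
\]
(The limits $L_i(t) = \lim_{|x|\to\infty} u_i(x,t)$ are uniform limits of continuous functions, hence continuous, so $L_2 - L_1$ attains a positive minimum on the compact interval $[0,T]$.) Consequently any point where $w > 0$ must lie in the compact region $\Omega_R := \{r \le R\}$ for every $R \ge R_1$; this is exactly where the behavior at infinity is controlled.

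Next I would derive the linear equation for $w$. Subtracting the two copies of \eqref{flow_equation} written as $\partial_t (u_i^N) = \tfrac{n+2}{4}(a_n \Delta_{g_0} u_i - R_{g_0} u_i)$ gives
\[
\partial_t (u_1^N - u_2^N) = \frac{n+2}{4}\left(a_n \Delta_{g_0} w - R_{g_0} w\right).
\]
By the fundamental theorem of calculus I would write $u_1^N - u_2^N = \psi\, w$ with $\psi := N \int_0^1 (s u_1 + (1-s) u_2)^{N-1}\, ds$; hypothesis (1) guarantees that $\psi$ is bounded above and bounded below by a positive constant on $\overline{\Omega_R} \times [0,T]$. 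Dividing by $\psi$ produces
\[
\partial_t w = \tilde A\, \Delta_{g_0} w + \tilde B\, w, \qquad \tilde A = \frac{(n+2) a_n}{4\psi} > 0, \quad \tilde B = -\frac{1}{\psi}\Big(\tfrac{n+2}{4} R_{g_0} + \partial_t \psi\Big),
\]
a linear parabolic equation with \emph{no} first-order terms, in which $\tilde A$ is bounded below by a positive constant and $\tilde B$ is bounded on $\overline{\Omega_R} \times [0,T]$ (boundedness of $\partial_t \psi$ following from the equation and the local bounds on $\Delta_{g_0} u_i$ and $\partial_t u_i$).

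Finally I would run the weak maximum principle on the compact parabolic cylinder $\overline{\Omega_R} \times [0,T]$ with $R \ge R_1$. On its parabolic boundary $w \le 0$: at $t=0$ by hypothesis (3), and on $\partial \Omega_R \times [0,T]$ by the estimate from hypothesis \eqref{max_cond2}. Assuming for contradiction that $w > 0$ somewhere, I would set $\bar w := e^{-\lambda t} w$ with $\lambda > \sup_{\overline{\Omega_R}\times[0,T]} \tilde B$, so that $\partial_t \bar w = \tilde A \Delta_{g_0} \bar w + (\tilde B - \lambda)\bar w$ with $\tilde B - \lambda < 0$. Since $\bar w \le 0$ on the parabolic boundary, its positive maximum over the compact cylinder is attained at a point $(x_0,t_0)$ with $x_0$ in the interior of $\Omega_R$ and $t_0 > 0$, where $\partial_t \bar w \ge 0$, $\Delta_{g_0} \bar w \le 0$, and $\bar w > 0$; these force $0 \le \partial_t \bar w = \tilde A \Delta_{g_0}\bar w + (\tilde B - \lambda)\bar w < 0$, a contradiction. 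Hence $w \le 0$ on $\overline{\Omega_R} \times [0,T]$, and letting $R \to \infty$ and then $T \to \infty$ yields $u_1 \le u_2$ everywhere.

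I expect the main obstacle to be the noncompactness of $M$: without control at infinity the supremum of $w$ could escape to the ends, so the crux is hypothesis \eqref{max_cond2}, which both confines any positive part of $w$ to a fixed compact region and supplies strictly negative lateral boundary data. Hypothesis (1) plays the essential supporting role of keeping the linearizing factor $\psi$ (and hence $\tilde A$, $\tilde B$) uniformly controlled---in particular $\psi$ bounded away from zero---so that the frozen-coefficient parabolic maximum principle genuinely applies.
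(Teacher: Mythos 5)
Your proposal is correct and follows essentially the same route as the paper: hypothesis (2) is used to confine any positive part of $u_1-u_2$ to a compact cylinder with nonpositive parabolic boundary data (via continuity of the limits at infinity on $[0,T]$ and their spatial uniformity), after which a linear parabolic maximum principle on that cylinder closes the argument. The only difference is that the paper simply cites the linear parabolic maximum principle from Protter--Weinberger at this point, whereas you prove it by hand through the mean-value linearization $u_1^N-u_2^N=\psi\,(u_1-u_2)$ and the $e^{-\lambda t}$ trick, which is a fine (and standard) way to make that cited step self-contained.
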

\begin{proof}
%By conditions \eqref{max_cond2} and \eqref{max_cond3} there exists a minimal $T_0>0$ such that $u_1\leq u_2$ on $[0,T_0]$. We want to show that $T_0=\infty$. 
%Suppose on the contrary that there exists $(x,T)$ such that $u_1(x,T)>u_2(x,T)$. 
% For convenience of notation, let $N=\frac{n+2}{n-2}$. Subtracting the equation satisfied by $u_1$ from that of $u_2$, we have
% \begin{align}
% \frac{\partial}{\partial t} (u_2-u_1)&=c(n)u_2^{1-N}\left(a_n\Delta_{g_0}( u_2-u_1)- R_{g_0}(u_2-u_1)\right)
% \\
% &\qquad+c(n)a_n\Delta_{g_0}u_1\left(u_2^{1-N}-u_1^{1-N}\right)-c(n)R_{g_0}u_1\left(u_2^{1-N}-u_1^{1-N}\right)\notag
% \\
% &=c(n)u_2^{1-N}\left(a_n\Delta_{g_0}( u_2-u_1)- R_{g_0}(u_2-u_1)\right)\notag
% \\
% &\qquad+ c_1(n)a_n (f_1\Delta_{g_0} u_1)(u_2-u_1)-c(n)(f_2 R_{g_0})(u_2-u_1),\notag
% \end{align}
% where $f_1,f_2$ are functions uniformly bounded on $M\times[0,T]$, with bounds depending on $n$ and $C_{T}$.
% %, and $u_1(x,0), u_2(x,0)$. 

Now by conditions \eqref{max_cond2} and \eqref{max_cond3}, for any $(x,T)$ there exists a set $\Omega\subset M$ such that $x\in \Omega$ and $u_1\leq u_2$ on $(\Omega\times\{0\})\cup(\partial\Omega\times[0,T])$. Indeed, those conditions imply that $\ell_i(t)=\lim_{x\rightarrow\infty}u_i(x,t)$ for $i=1,2$ are continuous functions, so that $\inf_{t\in[0,T]}\lim_{x\rightarrow\infty}u_2(x,t)-u_1(x,t)>0$. Since these limits are achieved uniformly in space, this last inequality allows us to find $\Omega$ sufficiently large with the desired properties.

%Indeed, to make sure this holds on $\partial\Omega\times[0,T]$, using local parabolic estimates we can bound $\left|\frac{\partial}{\partial t} u\right|$ uniformly on $M\times[0,T]$, from which it follows that $\inf_{t\in[0,T]}\lim_{|x|\rightarrow\infty}u_2(x,t)-u_1(x,t)>0$.

Then by the linear parabolic maximum principle \cite[\S 3.3]{Protter} we must have $u_1\leq u_2$ in $\Omega\times[0,T]$. Since $(x,T)$ was arbitrary the result follows.
%, contradicting our assumption on $(x,T)$.
\end{proof}

\subsection{Growth control and rescaled solutions}\label{subsec_growth_control}

As mentioned in the Introduction, from \cite{ChenWang} we know that the solutions $u(x,t)$ of \eqref{flow_equation} corresponding to Yamabe flows starting from AF manifolds with $Y\leq 0$ must blow up. Below we first observe that a standard estimate on the evolution of the scalar curvature allows us to control the growth of $u$ in general.

\begin{Lem}\label{lem_R_lower}
Let $(M^n,g_0)$ be a $C^{2+\alpha}_{-\tau}$ AF manifold. Along the Yamabe flow $(M^n,g(t))$, we have
\[R(g(t))\geq-\frac{1}{t}.\]
\end{Lem}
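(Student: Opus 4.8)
The plan is to derive the evolution equation satisfied by the scalar curvature along the flow, recognize $-1/t$ as an explicit spatially constant solution, and conclude by a parabolic comparison argument. Since the Yamabe flow $\partial_t g = -Rg$ is at each instant a conformal deformation of the metric, I would first differentiate the conformal transformation law for scalar curvature (writing $g(t+\epsilon)=e^{2f}g(t)$ with $f=\tfrac12(-R)\epsilon+O(\epsilon^2)$ and being careful with the sign convention for $\Delta$) to obtain the standard evolution equation
\[
\frac{\partial R}{\partial t} = (n-1)\,\Delta_{g(t)}R + R^2.
\]
This is the only genuinely geometric input, and I would record it carefully.

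Next I would observe that $\rho(t):=-1/t$, being spatially constant, satisfies $\Delta_{g(t)}\rho = 0$ and $\dot\rho = 1/t^2 = \rho^2$, so $\rho$ is itself a spatially homogeneous solution of the same equation. Setting $w := R - \rho = R + \tfrac{1}{t}$, the difference obeys the linear parabolic equation
\[
\frac{\partial w}{\partial t} = (n-1)\,\Delta_{g(t)}w + (R+\rho)\,w,
\]
since $R^2-\rho^2=(R+\rho)w$ and $\Delta_{g(t)}R=\Delta_{g(t)}w$. It therefore suffices to prove $w\geq 0$ for all $t>0$, which I would establish by the maximum principle.

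The boundary and initial behavior drive the comparison. Because $g_0$ is $C^{2+\alpha}_{-\tau}$, its scalar curvature is bounded, so by continuity $R$ remains bounded on $M\times(0,t_1]$ for small $t_1$; hence $w=R+1/t\to +\infty$ uniformly as $t\to 0^+$, ruling out any loss of positivity at the initial time. By the preservation of asymptotic flatness along the flow (\cite[Theorem 1.3]{CZ}), $R(g(t))\to 0$ at spatial infinity for each fixed $t$, so $w\to 1/t>0$ there, giving a strictly positive buffer near infinity.

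The main obstacle is precisely the noncompactness: an interior minimum of $w$ could in principle escape to spatial infinity. I would handle this either through a spatial barrier argument, exploiting the strictly positive value $1/t$ of $-\rho$ at infinity together with the decay of $R$, or by invoking a maximum principle for linear parabolic equations on asymptotically flat manifolds, which applies because the zeroth-order coefficient $R+\rho$ is bounded on each slab $M\times[t_0,T]$ by the $C^{2+\alpha}_{-\tau'}$ regularity preserved under the flow. In either case, since $w$ is positive as $t\to 0^+$ and at spatial infinity and satisfies a linear parabolic equation with locally bounded zeroth-order coefficient, the maximum principle forces $w\geq 0$ throughout, that is, $R\geq -1/t$.
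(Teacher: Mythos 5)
Your proof is correct and follows essentially the same route as the paper: the evolution equation $\partial_t R=(n-1)\Delta_{g(t)}R+R^2$, comparison with a spatially constant ODE solution, and a maximum principle valid on noncompact AF manifolds (the paper invokes the Ecker--Huisken maximum principle for this last step). The only difference is cosmetic: the paper sidesteps the singularity of $-1/t$ at $t=0$ by comparing with the time-shifted barriers $\phi_\epsilon(t)=\alpha/(\epsilon-\alpha t)$ starting at $t=0$ and letting $\epsilon\to 0$, whereas you start the comparison at a small $t_0>0$ where $R+1/t_0>0$, which works equally well.
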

\begin{proof}
Recall that under the Yamabe flow we have $\frac{\partial}{\partial t} R=(n-1)\Delta_{g_t} R+R^2$, and let $\alpha<0$ be such that $R_{g_\rho}\geq\alpha$ when $t=0$. Choose an $\epsilon>0$, and set
\[\phi(t)=\frac{\alpha}{\epsilon-\alpha t},\]
which satisfies
\[\frac{\partial}{\partial t}\phi=\phi^2=(n-1)\Delta_{g_t}\phi+\phi^2.\]
Then
\[\frac{\partial}{\partial t}(\phi-R)=(n-1)\Delta_{g_t}(\phi-R)+(\phi+R)(\phi-R),\]
and we may apply the Ecker--Huisken maximum principle \cite[Theorem 4.3]{EckerHuisken} on $M\times[0,T]$ for any $T>0$ to conclude that $\phi-R\leq 0$ on $M\times[0,\infty)$. Taking $\epsilon\rightarrow 0$ yields $R(x,t)\geq-\frac{1}{t}$.
\end{proof}

\begin{Prop}\label{upper_bd_lem}
Let $u(x,t)$ be a solution of \eqref{flow_equation} corresponding to the Yamabe flow starting from a $C^{2+\alpha}_{-\tau}$ AF manifold. Then $t^{-\frac{n-2}{4}} u(x,t)$ is nonincreasing, and thus has a nonnegative limit, so that
\[\max_{x\in M} u(x,t)=O(t^{\frac{n-2}{4}}).\]
\end{Prop}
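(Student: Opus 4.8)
The plan is to convert the parabolic equation \eqref{flow_equation} into a first-order-in-time differential inequality for $u$ alone, driven by the scalar curvature lower bound of Lemma \ref{lem_R_lower}, and then to read off the monotonicity of the rescaled quantity $t^{-\frac{n-2}{4}}u$ directly from that inequality. First I would rewrite the flow in terms of $u$. Since $g(t)=u^{\frac{4}{n-2}}g_0$ and the Yamabe flow satisfies $\frac{\partial g}{\partial t}=-R\,g$, differentiating the conformal factor gives $\frac{4}{n-2}u^{-1}\frac{\partial u}{\partial t}\,g=-R\,g$, hence the pointwise identity
\[
\frac{\partial u}{\partial t}=-\frac{n-2}{4}\,R(g(t))\,u .
\]
This is of course consistent with \eqref{flow_equation} after using $R(g(t))=u^{-N}L_{g_0}u$, but the displayed form is the one I want, because it isolates the role of the scalar curvature.

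Next I would insert the curvature lower bound $R(g(t))\geq-\frac{1}{t}$ from Lemma \ref{lem_R_lower}. Since $u>0$, this immediately yields
\[
\frac{\partial u}{\partial t}\leq\frac{n-2}{4t}\,u,\qquad t>0 .
\]
Setting $v(x,t)=t^{-\frac{n-2}{4}}u(x,t)$ and computing $\frac{\partial v}{\partial t}=t^{-\frac{n-2}{4}}\bigl(\frac{\partial u}{\partial t}-\frac{n-2}{4t}u\bigr)$, the inequality above gives $\frac{\partial v}{\partial t}\leq 0$. Thus for each fixed $x$ the function $t\mapsto v(x,t)$ is nonincreasing, and being positive it converges to a nonnegative limit as $t\to\infty$. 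Note the exponent $\frac{n-2}{4}$ is forced precisely by the $-\frac{1}{t}$ decay rate in Lemma \ref{lem_R_lower}, so essentially all of the quantitative content is already encapsulated there.

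Finally, to pass from pointwise monotonicity to the uniform-in-space bound I would compare against a fixed time slice. Monotonicity gives $u(x,t)\leq u(x,1)\,t^{\frac{n-2}{4}}$ for all $t\geq 1$, and since $u(\cdot,1)$ is continuous with $\lim_{|x|\to\infty}u(x,1)=1$, it is bounded on $M$; taking the supremum over $x$ then yields $\max_{x\in M}u(x,t)=O(t^{\frac{n-2}{4}})$.

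The argument has no single hard step once Lemma \ref{lem_R_lower} is in hand; the only points requiring care are (i) the time-differentiability of $u$ along the flow, which follows from the parabolic regularity underlying long-time existence, and (ii) the transition from the pointwise statement to the supremum bound, where the asymptotic normalization $\lim_{|x|\to\infty}u(x,t)=1$ is exactly what guarantees boundedness of the fixed time slice $u(\cdot,1)$ and hence the spatial uniformity. I would flag (ii) as the place most likely to hide a gap if the asymptotic condition were weakened.
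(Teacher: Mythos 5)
Your proof is correct and follows essentially the same route as the paper: the paper's (one-line) proof is precisely to integrate $\frac{\partial u}{\partial t}=-\frac{n-2}{4}Ru$ using the bound $R(g(t))\geq -\frac{1}{t}$ from Lemma \ref{lem_R_lower}, which is exactly your differential inequality for $t^{-\frac{n-2}{4}}u$. Your additional remarks on passing to the spatial supremum via the normalization $\lim_{|x|\to\infty}u(x,t)=1$ are a sound filling-in of details the paper leaves implicit.
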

\begin{proof}
Integrate the equation
\[\frac{\partial}{\partial t} u=-\frac{n-2}{4} R u\]
and apply the estimate of Lemma \ref{lem_R_lower}.
\end{proof}

%\begin{Lem}\label{upper_bd_lem}
%Let $\tilde{u}(x,t)=t^{-\frac{n-2}{4}}u(x,t)$. Then $\tilde{u}$ is nonincreasing in $t$.
%\end{Lem}
%\begin{proof}
%For $\lambda>0$, let $u_\lambda(x,t)=\lambda u(x,\lambda^p t)$, with $p=-\frac{4}{n-2}$. Then we may check that $u_\lambda$ satisfy \eqref{flow_equation}. Moreover, by the comparison principle of Lemma \ref{max_principle_lem} we have that $u_\lambda\geq u$ on $M\times[0,\infty)$ whenever $\lambda\geq 1$. Therefore,
%\[0\leq\frac{\partial}{\partial\lambda}|_{\lambda=1}u_\lambda(x,t)=u(x,t)-\frac{4}{n-2} t u_t(x,t).\]
%From this we obtain that $\tilde{u}$ is nonincreasing, since we may compute that
%\[\tilde{u}_t=t^{-\frac{n-2}{4}}\left(-\frac{n-2}{4} t^{-1} u+u_t\right)\leq 0.\]
%\end{proof}

In light of Proposition \ref{upper_bd_lem}, we define
\begin{equation}\label{eq_utilde}
\tilde{u}(x,t)=t^{-\frac{n-2}{4}}u(x,t).
\end{equation}
Since $\tilde{u}(x,t)\geq 0$ and is monotonically decreasing, it has a pointwise limit as $t\rightarrow\infty$, and we set
\begin{equation}
\tilde{u}_\infty(x)=\lim_{t\rightarrow\infty}\tilde{u}(x,t).
\end{equation}
When $Y(M^n,[g_0])<0$ we will see that $\tilde{u}_\infty>0$ is a smooth function solving a naturally associated elliptic equation, while when $Y(M^n,[g_0])=0$ we will find that $\tilde{u}_\infty\equiv 0$ converges to zero. In this latter case, a different normalization of $u$ yields a (smooth) positive limit.

\subsection{Conformally compactifying $M$}\label{subsec_compactify}

In order to carry out our study of the cases $Y(M^n,[g_0])\leq 0$, we check below in this setting that there exists a conformal compactification of $M$ which topologically is given by adding a point at infinity to $M$, and which solves the Yamabe problem on this compactified manifold, so that it has constant scalar curvature $Y(M^n,[g_0])$. 

%We still need to show that $\tilde{u}_\infty=u_0$. To achieve this we need to be first careful about the existence and regularity of $u_0$, and then make sure we have sufficient regularity to attain the conclusion.

First we have the following result on compactifying $M$ by adding a single point at infinity, without yet adding a requirement on its scalar curvature. See also \cite{Herzlich} for related compactification results.

\begin{Lem}[{\cite[Lemmas 5.2, 5.3]{DiltsMaxwell}}]\label{lem_DiltsMaxwell}
Suppose $(M,g)$ is a $W^{2,p}_{-\tau}$ AE manifold, with $p>n/2$ and $\tau\geq\frac{n}{p}-2$. Then there exists a smooth conformal factor $\phi$ decaying to zero at the rate $r^{2-n}$ such that $\ol{g}=\phi^{\frac{4}{n-2}}g$ extends to a $W^{2,p}$ metric on the one-point compactification $\ol{M}=M\cup\{q\}$. Moreover, $Y(M,g)=Y(\ol{M},[\ol{g}])$.
\end{Lem}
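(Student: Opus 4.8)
The plan is to build $\ol M$ as a one-point compactification via the Kelvin inversion of the asymptotically Euclidean end, using a conformal factor modeled on the flat fundamental solution $r^{2-n}$. First I would fix a positive function $\phi\in C^\infty(M)$ that coincides with $r^{2-n}$ under the identification $\Phi$ in a neighborhood of infinity, and set $\ol g=\phi^{\frac{4}{n-2}}g$. Since $g-\hat g\in W^{2,p}_{-\tau}$, near infinity $g$ is close to the flat metric $\hat g$, while $\phi^{\frac{4}{n-2}}=|x|^{-4}$ there, so to leading order $\ol g\approx|x|^{-4}\hat g$. Under the inversion $y=x/|x|^2$, which sends $|x|\to\infty$ to $y\to 0$ and adjoins the point $q=\{y=0\}$, one has the classical identity $|x|^{-4}\hat g=\hat g_y$; thus the model part of $\ol g$ pulls back to the flat metric in the $y$-coordinate and extends smoothly across $q$.

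The main obstacle is to upgrade this to the statement that the \emph{full} metric $\ol g$ extends across $q$ as a $W^{2,p}$ metric. I would do this by tracking the error $g-\hat g\in W^{2,p}_{-\tau}$ through the multiplication by $|x|^{-4}$ and the inversion, using that the conformal factor $|x|^{-4}$ exactly cancels the inversion Jacobian at top order (this is precisely why the exponent $2-n$ is the correct one), so that the transformed error tensor is controlled in terms of $g-\hat g$ itself. The bookkeeping of weighted $L^p$ norms under this change of variables is where the hypotheses $p>n/2$ and $\tau\ge\frac np-2$ are consumed: they are exactly the conditions guaranteeing that the inverted error and its weak derivatives up to order two lie in $L^p$ on a punctured ball around $q$ and that $q$ is a removable singularity at this integrability level, so $\ol g\in W^{2,p}$ near $q$. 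One also reads off from the construction that the conformal factor decays like $r^{2-n}$, as asserted. Since $p>n/2$, the extended metric has scalar curvature $R_{\ol g}\in L^p$ with $p>n/2$, so that the Yamabe functional on $(\ol M,[\ol g])$ is well defined and lower semicontinuous on $W^{1,2}(\ol M)$.

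For the equality $Y(M,g)=Y(\ol M,[\ol g])$, I would exploit the conformal covariance of the conformal Laplacian rather than estimate the two infima separately. Writing $g=\psi^{\frac{4}{n-2}}\ol g$ with $\psi=\phi^{-1}$, the covariance identity $L_g f=\psi^{-N}L_{\ol g}(\psi f)$ gives, for the correspondence $w=\psi f$, the exact equalities
\begin{align}
\int_M \big(a_n|\nabla f|_g^2+R_gf^2\big)\,dV_g&=\int_{\ol M}\big(a_n|\nabla w|_{\ol g}^2+R_{\ol g}w^2\big)\,dV_{\ol g},\\
\int_M|f|^{\frac{2n}{n-2}}\,dV_g&=\int_{\ol M}|w|^{\frac{2n}{n-2}}\,dV_{\ol g},
\end{align}
so the two Yamabe quotients agree termwise. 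Because $\psi$ is smooth and positive on $M$, the map $f\mapsto w=\psi f$ sends $C^{\infty}_0(M)$ into functions on $\ol M$ vanishing near $q$, which immediately yields $Y(\ol M,[\ol g])\le Y(M,g)$.

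The reverse inequality is the one delicate point in this step: a general competitor $w\in C^\infty(\ol M)$ need not vanish at $q$, and $f=\phi w$ is then only a decaying (not compactly supported) competitor on $M$. I would close the gap with a capacity argument, cutting $w$ off to zero on a ball of radius $\varepsilon$ about $q$ by a standard logarithmic or power cutoff; since a point has vanishing $W^{1,2}$-capacity in dimension $n\ge 3$, the Dirichlet energy and potential contributions of the cutoff region tend to $0$ as $\varepsilon\to 0$, while the $L^{\frac{2n}{n-2}}$ denominator is unaffected in the limit. This shows every competitor on $\ol M$ is approximated in Yamabe quotient by one vanishing near $q$, hence pulled back to a competitor on $M$, giving $Y(M,g)\le Y(\ol M,[\ol g])$ and completing the equality. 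I expect the $W^{2,p}$-extension of the second paragraph, with its sharp exponent bookkeeping under inversion, to be the principal technical difficulty; the Yamabe equality is conceptually an exact identity together with a routine capacity estimate.
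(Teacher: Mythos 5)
The paper itself offers no proof of this lemma---it is imported wholesale from Dilts--Maxwell---so the relevant comparison is with that source's argument, which your proposal essentially reconstructs: a conformal factor agreeing with $r^{2-n}$ near infinity, Kelvin inversion to identify the model part $|x|^{-4}\hat{g}$ with the flat metric near the added point $q$, and, for $Y(M,g)=Y(\ol{M},[\ol{g}])$, conformal covariance of the Yamabe quotient in one direction together with the vanishing $W^{1,2}$-capacity of a point ($n\geq 3$) in the other. Both halves are structurally correct, and the capacity cutoff is exactly the right tool for the inequality $Y(M,g)\leq Y(\ol{M},[\ol{g}])$.

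One caveat, located precisely at the step you defer as ``bookkeeping,'' and which you claim consumes exactly the stated hypotheses. Writing $g_{kl}=\delta_{kl}+e_{kl}$ with $e\in W^{2,p}_{-\tau}$, the inverted error is $h_{ij}(y)=A_i^k(\hat{y})A_j^l(\hat{y})\,e_{kl}(y/|y|^2)$ with $A=\mathrm{Id}-2\hat{y}\otimes\hat{y}$ orthogonal, so that
$|D_y^2h|\leq C\left(|y|^{-4}|D_x^2e|+|y|^{-3}|D_xe|+|y|^{-2}|e|\right)$.
Converting $\int_{|y|<1}|D_y^2h|^p\,dy$ back to $x$-coordinates via $dy=|x|^{-2n}dx$ and comparing against the weights furnished by $e\in W^{2,p}_{-\tau}$ (namely $\int |D_x^je|^p|x|^{(\tau+j)p-n}dx<\infty$), each term demands $2p-n\leq\tau p$, i.e.\ $\tau\geq 2-\frac{n}{p}$. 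This is a genuine constraint---it is strictly positive when $p>n/2$---whereas the condition $\tau\geq\frac{n}{p}-2$ appearing in the statement is vacuous, being negative while $\tau>0$ holds by definition of an AF metric; it is presumably a sign slip in translating Dilts--Maxwell's negative-weight convention (their $\rho\leq\frac{n}{p}-2$ with $\rho=-\tau$). So your assertion that the stated hypotheses are ``exactly'' what the weighted estimates require is not correct as written: an honest execution of the bookkeeping would reveal that the argument needs $\tau\geq 2-\frac{n}{p}$, and under the hypotheses as literally stated the $W^{2,p}$ extension across $q$ does not follow. With that correction, your outline is the standard proof.
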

% \edz{Are there other references of Lemma \ref{lem_DiltsMaxwell}, for example [Herzlich]? 
% Theorem C of Herzlich is similar but does not state the decay rate of the conformal factor ---Eric}
\begin{Rem}
Note that $\ol{g}$ is smooth away from $q$; the metric $\ol{g}$ is $W^{2,p}$ in the sense that in a coordinate ball $B$ about $q$, the components of $\ol{g}$ along with its first and second derivatives are $L^p$ integrable with respect to $dV_{\ol{g}}$. Hence the metric $\ol{g}$ is H\"{o}lder continuous and bilipschitz equivalent to a smooth metric on the manifold $\ol{M}$---that is to say there is a smooth Riemannian metric $\underline{g}$  on $\ol{M}$ such that for some positive constant $\gamma:$
$$\gamma^{-2} \underline{g}\le \ol{g}\le \gamma^2  \underline{g}.$$
Hence
$$ \gamma^{-n} dV_{\underline{g}}\le dV_{\ol{g}}\le \gamma^{n} dV_{\underline{g}}\quad\text{and}\quad\gamma^{-1}d_{\underline{g}}\le d_{\ol{g}}\le \gamma d_{\underline{g}}.$$ Moreover for any smooth function $\varphi:$

$$\gamma^{-1}| d\varphi|_{\underline{g}}\le | d\varphi|_{\ol{g}}\le \gamma| d\varphi|_{\underline{g}}.$$
So the spaces $L^p(\ol{M},  \underline{g})$ and $L^p(\ol{M},  \ol{g})$ are the same with equivalent norms, and the same is true for the $W^{1,2}$-spaces.
\end{Rem}

Next, we need to solve the Yamabe problem on $(\ol{M},[\ol{g}])$. We apply a result of \cite{ACM} to do so.

\begin{Lem}\label{lem_Yamabe_solution}
Suppose $(M,g)$ is a $W^{2,p}_{-\tau}$ AE manifold for some $p>n/2$ and $\tau\geq\frac{n}{p}-2$, with $Y(M,[g])<0$. Then in the notation of Lemma \ref{lem_DiltsMaxwell}, on $(\ol{M},\ol{g})$ there exists a function $u\in W^{1,2}\cap L^\infty(\ol{M})$ with $\inf_{\ol{M}}u>0$ such that on $\ol{M}\setminus\{q\}$,
\begin{equation}
    -a_n\ol{\Delta} u+\ol{R} u=Y(M,[g])u^{\frac{n+2}{n-2}}.\label{eq_Yamabe_solution}
\end{equation}
Here $\ol{\Delta}$ and $\ol{R}$ denote the Laplacian operator and the scalar curvature of $\ol{g}$ respectively.
\end{Lem}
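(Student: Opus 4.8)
The plan is to realize $(\ol M,\ol g)$ as a compact space carrying a single isolated, mild singularity at $q$, and to solve the negative Yamabe problem there by the variational existence theory of \cite{ACM}. The first step is bookkeeping. By Lemma \ref{lem_DiltsMaxwell} we have $Y(\ol M,[\ol g])=Y(M,[g])<0$, and by the Remark following that lemma the metric $\ol g$ is bilipschitz equivalent to a smooth background metric $\underline g$ on $\ol M$, with equivalent $L^p$ and $W^{1,2}$ norms. In particular $\ol R\in L^p(\ol M)$ with $p>n/2$; since $\ol M$ is compact this gives $\ol R\in L^{n/2}(\ol M)$, so that by Sobolev embedding the quadratic form
\[
\mc E(v)=\int_{\ol M}\bigl(a_n|\nabla v|_{\ol g}^2+\ol R\,v^2\bigr)\,dV_{\ol g}
\]
is continuous on $W^{1,2}(\ol M)$ and the Yamabe functional is well defined.

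Next I would carry out the direct method on the Yamabe functional $Q(v)=\mc E(v)/\|v\|_{L^{2n/(n-2)}}^2$, whose infimum over $W^{1,2}(\ol M)\setminus\{0\}$ is exactly $Y(\ol M,[\ol g])<0$. The essential point, as in the classical Yamabe problem, is to prevent loss of compactness at the critical Sobolev exponent $2n/(n-2)$. Here the only possible concentration point is $q$, and near $q$ the metric $\ol g$ is $C^0$-close to, indeed a controlled perturbation of, the Euclidean model, whose local Yamabe invariant equals $Y(S^n)>0$. Since $Y(\ol M,[\ol g])<0<\min\{Y(S^n),Y_{\mathrm{loc}}(q)\}$, the strict inequality required by the existence criterion of \cite{ACM} holds automatically in the negative case, and we obtain a nonnegative minimizer $u\in W^{1,2}(\ol M)$, which after replacing $u$ by $|u|$ and normalizing we may take to satisfy the Euler--Lagrange equation \eqref{eq_Yamabe_solution} weakly on $\ol M$.

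Finally I would upgrade $u$ to the asserted positivity and boundedness. Writing the equation as the linear problem $-a_n\ol\Delta u+c\,u=0$ with potential $c=\ol R-Y\,u^{\frac{4}{n-2}}\in L^{n/2}(\ol M)$, Trudinger's iteration (valid here because $\ol R\in L^{n/2}$ and because the bilipschitz equivalence to $\underline g$ lets one run Moser iteration with respect to $\ol g$) gives $u\in L^\infty(\ol M)$. With $u$ bounded, $c\in L^p$ for some $p>n/2$, so the De Giorgi--Nash--Moser Harnack inequality for such equations shows that the nonnegative, nontrivial $u$ is strictly positive with $\inf_{\ol M}u>0$. Away from $q$ the metric is smooth, so standard elliptic regularity makes $u$ smooth on $\ol M\setminus\{q\}$, where \eqref{eq_Yamabe_solution} then holds classically.

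The main obstacle is precisely the failure of the naive smooth Yamabe argument at the singular point: because $\ol g$ is only $W^{2,p}$ at $q$, neither the Aubin test-function construction nor Schoen's positive-mass argument applies verbatim, and one must instead rule out concentration at $q$ through the local Yamabe invariant of the model cone. This is exactly what the machinery of \cite{ACM} supplies; the sign $Y<0$ is what makes the decisive strict inequality trivial and simultaneously forces $\inf_{\ol M}u>0$ rather than mere nonnegativity.
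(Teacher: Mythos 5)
Your proposal is correct and follows essentially the same route as the paper: both reduce the problem to the variational existence theorem of \cite{ACM} (Theorem 1.12 and Proposition 1.15 there), using the bilipschitz equivalence of $\ol{g}$ with a smooth metric to justify the analytic hypotheses (Sobolev inequality, $\ol{R}\in L^p$ with $p>n/2$), and keying on the observation that $Y(\ol{M},[\ol{g}])=Y(M,[g])<0$ makes the required strict inequality with the local Yamabe invariant automatic. The only difference is presentational---you sketch the internals of the concentration-compactness and Moser iteration arguments that the paper delegates to the citations.
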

\begin{proof}
The result follows from \cite[Theorem 1.12, Proposition 1.15]{ACM} once we verify its assumptions are satisfied in our case. In the notation of that work we check:
\begin{enumerate}[i)]
\item\label{lem_Yamabe_solution_list} Let $f:\ol{M}\rightarrow\mb{R}$ be a Lipschitz function with respect to the distance induced on $\ol{M}$ metric $\ol{g}$. We need to check that $f$ can be approximated in $W^{1,2}(\ol{M},\ol{g})$ by functions in $C_0^1(\ol{M}\setminus\{q\})$.

Let $\phi:[0,\infty)\rightarrow\mb{R}$ be a smooth, nonnegative function with $\phi(x)=0$ for $x\leq 1$ and $\phi(x)=1$ for $x\geq 2$. With the notation $r_p=d_{\ol{g}}(p,q)$, we define
\[f_\epsilon(p)=f(p)\phi\left(\frac{r_p}{\epsilon}\right).\]

This is true because $\ol{g}$ is a continuous metric, hence bilipschitz to a smooth metric on $\ol{M}$. The desired property certainly holds on smooth metrics, and both function spaces are invariant under bilipschitz equivalence.

\item Since $(\ol{M},\ol{g})$ is compact with finite volume, it suffices to check for $\mu$ the measure induced by $\ol{g}$ that
\begin{equation}
C^{-1} r^n\leq\mu(B(p,r))\leq C r^n\label{eq_Ahlfors}
\end{equation}
for $r>0$ small. Since $\ol{g}$ is bilipschitz to a smooth metric on $\ol{M}$ as noted above, this follows immediately.

\item We check that the Sobolev inequality holds on $W^{1,2}(\ol{M},\ol{g})$. Once again this holds because $\ol{g}$ is bilipschitz to a smooth metric on $\ol{M}$.

\item\label{ACM_4} a) Since $\ol{g}$ is a $W^{2,p}$ metric, we indeed have $\ol{R}\in L^p(\ol{M},d\mathrm{vol}_{\ol{g}})$, where $p>n/2$.
\end{enumerate}
Finally, since $Y(\ol{M},[\ol{g}])=Y(M,g)<0$, then by \cite[Section 1.2]{ACM} the condition $Y(\ol{M},[\ol{g}])<Y_\ell(\ol{M},[\ol{g}])$ from \cite[Theorem 1.12]{ACM} trivially holds, so we can apply that result to obtain the existence of the desired $u$, with \cite[Proposition 1.15]{ACM} giving its positivity.
\end{proof}

\begin{Rem}
Note that the function $u$ given by Lemma \ref{lem_Yamabe_solution} above is smooth away from $q$. Indeed, $\ol{R}$ and the coefficients of $\ol{\Delta}$ are smooth away from $q$, so this follows by the elliptic regularity and boundedness of $u$.
\end{Rem}

When $Y(M,[g])=0$, the analogue of Lemma \ref{lem_Yamabe_solution}  also holds.

\begin{Lem}\label{lem_Yamabe_solution0}
Suppose $(M,g)$ is a $W^{2,p}_{-\tau}$ AE manifold for some $p>n/2$ and $\tau\geq\frac{n}{p}-2$, with $Y(M,[g])=0$. Then in the notation of Lemma \ref{lem_DiltsMaxwell}, on $(\ol{M},\ol{g})$ there exists a function $u\in W^{1,2}\cap L^\infty(\ol{M})$ with $\inf_{\ol{M}}u>0$ such that on $\ol{M}\setminus\{q\}$,
\begin{equation}
-a_n\ol{\Delta} u+\ol{R} u=Y(M,[g])u^{\frac{n+2}{n-2}}=0.\label{eq_Yamabe_solution0}
\end{equation}
\end{Lem}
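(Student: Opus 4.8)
The plan is to follow essentially verbatim the argument of Lemma \ref{lem_Yamabe_solution}, which produced a minimizer of the Yamabe functional on $(\ol{M},[\ol{g}])$ by applying \cite[Theorem 1.12, Proposition 1.15]{ACM}. The four structural hypotheses checked there---density of $C^1_0(\ol{M}\setminus\{q\})$ in $W^{1,2}(\ol{M},\ol{g})$, the Ahlfors regularity \eqref{eq_Ahlfors}, the validity of the Sobolev inequality on $W^{1,2}(\ol{M},\ol{g})$, and the integrability $\ol{R}\in L^p(\ol{M})$ with $p>n/2$---make no reference whatsoever to the sign of $Y(\ol{M},[\ol{g}])$. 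They hold here for exactly the same reasons as before: the compactified metric $\ol{g}$ from Lemma \ref{lem_DiltsMaxwell} is $W^{2,p}$ and is bilipschitz to a smooth metric on $\ol{M}$. So I would simply invoke those verifications again, and the only point genuinely requiring new attention is the strict gap condition $Y(\ol{M},[\ol{g}])<Y_\ell(\ol{M},[\ol{g}])$ from \cite[Theorem 1.12]{ACM}.

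First I would address this gap. When $Y(M,[g])<0$ it was automatic, since the local Yamabe invariant always satisfies $Y_\ell\geq 0$. Now $Y(\ol{M},[\ol{g}])=Y(M,[g])=0$, so instead I must verify the \emph{strict} inequality $0<Y_\ell(\ol{M},[\ol{g}])$. The local invariant is concentrated at the single distinguished point $q$, and since $\ol{g}$ is H\"older continuous and bilipschitz to a smooth metric in a neighborhood of $q$, the relevant tangent model at $q$ is Euclidean, so $Y_\ell(\ol{M},[\ol{g}])$ equals the Yamabe constant $Y(S^n)$ of the round sphere. As $Y(S^n)>0$, the required inequality $0<Y_\ell$ holds.

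With the gap condition in hand, \cite[Theorem 1.12]{ACM} yields a minimizer $u\in W^{1,2}\cap L^\infty(\ol{M})$, and \cite[Proposition 1.15]{ACM} gives $\inf_{\ol{M}}u>0$. Because the Yamabe constant vanishes, the multiplier appearing in the Euler--Lagrange equation is precisely $Y(\ol{M},[\ol{g}])=0$, so $u$ solves
\[-a_n\ol{\Delta} u+\ol{R} u=Y(M,[g])\,u^{\frac{n+2}{n-2}}=0\]
weakly on $\ol{M}\setminus\{q\}$, which is \eqref{eq_Yamabe_solution0}. As in the remark following Lemma \ref{lem_Yamabe_solution}, elliptic regularity together with the boundedness of $u$ and the smoothness of $\ol{R}$ and $\ol{\Delta}$ away from $q$ then upgrades $u$ to a smooth function on $\ol{M}\setminus\{q\}$.

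The main obstacle I anticipate is precisely the verification of $0<Y_\ell$. Unlike the strictly negative case, where the gap came for free from $Y<0\leq Y_\ell$, here one must genuinely identify the local Yamabe invariant at $q$ and confirm its strict positivity; this reduces, via the bilipschitz equivalence of $\ol{g}$ to a smooth metric near $q$, to the (classical) strict positivity of the round-sphere Yamabe constant. Everything else in the argument is inherited unchanged from the proof of Lemma \ref{lem_Yamabe_solution}.
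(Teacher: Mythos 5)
Your overall strategy coincides with the paper's: rerun Lemma \ref{lem_Yamabe_solution}, note that the structural hypotheses (density, Ahlfors regularity, Sobolev inequality, curvature integrability) are insensitive to the sign of $Y$, and isolate the gap condition of \cite[Theorem 1.12]{ACM} as the one genuinely new point. The problem is in how you verify that condition. You claim $Y_\ell(\ol{M},[\ol{g}])=Y(S^n)>0$ because $\ol{g}$ is H\"older continuous and bilipschitz to a smooth metric near $q$, so that ``the tangent model at $q$ is Euclidean.'' Two objections. First, bilipschitz equivalence yields only two-sided bounds on the relevant quotients, with constants depending on the bilipschitz constant $\gamma$; it does not give equality of local invariants. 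This alone would be harmless, since positivity survives comparability. Second, and decisively, the local \emph{Yamabe} invariant is not the local \emph{Sobolev} constant: it contains the term $\int\ol{R}\,v^2\,dV_{\ol{g}}$, and near $q$ the scalar curvature $\ol{R}$ is only in $L^p$, $p>n/2$, not bounded. A priori a concentrating negative part of $\ol{R}$ at $q$ could drag $Y_\ell$ down, even below zero, so neither $Y_\ell=Y(S^n)$ nor $Y_\ell>0$ follows from the metric being pointwise close to Euclidean. What is needed is that the curvature term is negligible on small balls: by H\"older,
\[
\Bigl|\int_{B_r}\ol{R}\,v^2\,dV_{\ol{g}}\Bigr|\le\|\ol{R}\|_{L^p(B_r)}\,\|v\|^2_{L^{2p/(p-1)}},\qquad\tfrac{2p}{p-1}<\tfrac{2n}{n-2}\ \text{ since }\ p>\tfrac{n}{2},
\]
so that an extra positive power of the volume of $B_r$ appears and $\|\ol{R}\|_{L^p(B_r)}\to0$ as $r\to0$; this yields $Y_\ell\ge a_nS_\ell$, hence $Y_\ell>0$ once the local Sobolev constant $S_\ell$ is positive. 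You never make this argument, and it is exactly the crux of the $Y=0$ case: when $Y<0$ the gap condition came for free from $Y_\ell\ge0$, whereas now strict positivity must be earned.

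The paper sidesteps this difficulty rather than resolving it your way: its proof of the lemma works with the local Sobolev constant $S_\ell$ (defined by $\int|d\phi|^2$ alone, with no curvature term), proves $S_\ell(\ol{M},\ol{g})>0$ by bilipschitz comparison with a smooth metric --- a claim that is legitimate at that level, precisely because no curvature term is involved --- and then invokes the variant of \cite[Theorem 1.12]{ACM} in which hypothesis iv) a) ($\ol{R}\in L^p$) is replaced by iv) c) ($\ol{R}^-\in L^p$), whose gap condition takes the form $Y(\ol{M},[\ol{g}])=0<S_\ell(\ol{M},\ol{g})$. So to complete your proof you should either insert the $L^p$ smallness argument above to pass from $S_\ell>0$ to $Y_\ell>0$, or, as the paper does, quote the $S_\ell$-form of the ACM theorem so that only the curvature-free constant needs a lower bound.
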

\begin{proof}
For any open set $\mathcal U\subset M$, we define as in \cite{ACM} Sobolev constant 
$$ S(\mathcal U)=\inf \{\int |d\phi|^2 d\mu: \phi \in W^{1,2}_0(\mathcal U\cap \Omega), \|\phi\|_{\frac{2n}{n-2}}=1\}$$
and local Sobolev constant
$$S_\ell(M,g)=\inf_{p\in M} \lim_{r\rightarrow 0}
S(B(p, r)).
$$
We claim that in the notation of Lemmas \ref{lem_DiltsMaxwell} and \ref{lem_Yamabe_solution} and \cite{ACM}, we have 
$S_\ell(\ol{M},\ol{g})>0$. In a similar way as in the proof of Lemma \ref{lem_Yamabe_solution}, this is true because $\ol{g}$ is bilipschitz equivalent to a smooth metric on $\ol{M}$ and because the desired property holds for smooth metrics.
%Since $\ol{g}$ is smooth away from $q$ we immediately have that for all $p\neq q$,
% \begin{equation}\label{eq_pnotq_S}
% \lim_{r\rightarrow 0} S(B(p,r))=Y(S^n,[g_c])>0.
% \end{equation}
% To address this limit when $p=q$, we can again approximate $\ol{g}$ as in the proof of Lemma \ref{lem_Yamabe_solution} by smooth metrics $h_i$ on $\ol{M}$ converging to $\ol{g}$ in $C^{1,\alpha}$, and choose $i_0$ large enough such that 
% \begin{equation}\label{eq_approx_metrics}
% \frac{1}{2} h_{i_0}\leq \ol{g}\leq 2 h_{i_0},\quad \frac{1}{2} h_{i_0}^{-1}\leq \ol{g}^{-1}\leq 2 h_{i_0}^{-1}.
% \end{equation}
% Then since $h_{i_0}$ is a smooth metric on all of $\ol{M}$ we have
% \[\lim_{r\rightarrow 0} S(B(q,r),h_{i_0})=Y(S^n,[g_c])>0,\]
% while by \eqref{eq_approx_metrics}, there exists some $C>0$ such that
% \begin{equation}\label{eq_pq_S}
% \lim_{r\rightarrow 0} S(B(q,r),\ol{g})\geq C \lim_{r\rightarrow 0} S(B(q,r),h_{i_0})>0.
% \end{equation}
% Our claim then immediately follows from \eqref{eq_pnotq_S} and \eqref{eq_pq_S}.

Consequently, we can follow the arguments of Lemmas \ref{lem_DiltsMaxwell} and \ref{lem_Yamabe_solution}
% {\color{red} [need to check---checked 4/27]} 
to obtain a function $u_0$ such that the metric $\hat{g}=u_0^{\frac{4}{n-2}}g_0$ has constant scalar curvature $0$, replacing in the arguments of Lemma \ref{lem_Yamabe_solution} condition \ref{ACM_4}) a) by condition iv) c) of \cite[Theorem 1.12]{ACM}, the negative part $\ol{R}^-\in L^p(\ol{M},d\mathrm{vol}_{\ol{g}})$, and using our claim which gives $Y(\ol{M},[\ol{g}])=0<S_\ell(\ol{M},[\ol{g}])$. 
\end{proof}

We also have uniqueness of the functions given in Lemmas \ref{lem_Yamabe_solution} and \ref{lem_Yamabe_solution0}.

\begin{Prop}\label{prop_Yamabe_unique}
The function $u$ given by Lemma \ref{lem_Yamabe_solution} is unique among functions in $W^{1,2}\cap L^\infty(\ol{M})$, while the function $u$ given by Lemma \ref{lem_Yamabe_solution0} is unique up to a constant multiplicative factor among functions in $W^{1,2}\cap L^\infty(\ol{M})$.
\end{Prop}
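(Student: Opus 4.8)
The plan is to establish both uniqueness statements simultaneously by means of a Díaz--Saá type integral identity. I prefer this to a maximum-principle comparison of the ratio $u_2/u_1$ because the extrema of that ratio could a priori occur at the point $q$, where $\ol g$ is only $W^{2,p}$; the integral argument sidesteps pointwise regularity there entirely. So let $u_1,u_2$ be two positive solutions of \eqref{eq_Yamabe_solution} (resp.\ of \eqref{eq_Yamabe_solution0}) lying in $W^{1,2}\cap L^\infty(\ol M)$ with $\inf_{\ol M}u_i>0$; that any competing $W^{1,2}\cap L^\infty$ solution is automatically positive and bounded below by a positive constant follows from the strong maximum principle (Harnack for the De Giorgi--Nash--Moser class, valid since $\ol R\in L^p$ with $p>n/2$) together with continuity on the compact manifold $\ol M$.

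First I would record the weak formulation on all of $\ol M$. Although each $u_i$ solves its equation classically only on $\ol M\setminus\{q\}$, for $n\ge 3$ a single point has zero $2$-capacity, so there are cutoffs equal to $1$ near $q$ whose gradients tend to zero in $L^2$; multiplying by these and passing to the limit shows that
\[\int_{\ol M}\bigl(a_n\,\nabla u_i\cdot\nabla\varphi+\ol R\,u_i\varphi\bigr)\,dV_{\ol g}=Y\int_{\ol M}u_i^{N}\varphi\,dV_{\ol g}\]
holds for every $\varphi\in W^{1,2}(\ol M)$. This is exactly the approximation by functions in $C^1_0(\ol M\setminus\{q\})$ already used in the proof of Lemma \ref{lem_Yamabe_solution}, and it is the one genuinely delicate point. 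Because $\inf u_i>0$ and $u_i\in L^\infty$, the candidate test functions $\varphi_1=(u_1^2-u_2^2)/u_1$ and $\varphi_2=(u_1^2-u_2^2)/u_2$ belong to $W^{1,2}\cap L^\infty(\ol M)$ and are therefore admissible.

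Next I would test the $u_1$-equation against $\varphi_1$, the $u_2$-equation against $\varphi_2$, and subtract. The essential gain is that the curvature contributions are identical, namely $\int_{\ol M}\ol R\,(u_1^2-u_2^2)\,dV_{\ol g}$ in each case, so the sign-indefinite potential $\ol R$ cancels completely. Expanding $\nabla\varphi_1$ and $\nabla\varphi_2$ gives the pointwise algebraic identity $\nabla u_1\cdot\nabla\varphi_1-\nabla u_2\cdot\nabla\varphi_2=(u_1^2+u_2^2)\,\lvert\nabla\log u_1-\nabla\log u_2\rvert^2$, and the subtraction collapses to
\[a_n\int_{\ol M}(u_1^2+u_2^2)\,\bigl\lvert\nabla\log(u_1/u_2)\bigr\rvert^2\,dV_{\ol g}=Y\int_{\ol M}\bigl(u_1^{N-1}-u_2^{N-1}\bigr)\bigl(u_1^2-u_2^2\bigr)\,dV_{\ol g},\]
all terms being integrable by the $L^\infty$ bounds together with $\ol R\in L^p$.

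Finally I would read off the conclusion from signs. The left-hand side is manifestly $\ge 0$, while since $t\mapsto t^{N-1}$ and $t\mapsto t^2$ are increasing on $(0,\infty)$ the integrand on the right is pointwise $\ge 0$, so with $Y\le 0$ the right-hand side is $\le 0$. Hence both sides vanish. Vanishing of the left side forces $\nabla\log(u_1/u_2)\equiv 0$, i.e.\ $u_1=c\,u_2$ for some constant $c>0$; this is precisely the uniqueness up to a multiplicative constant asserted in the case $Y=0$ of Lemma \ref{lem_Yamabe_solution0}. When $Y<0$, vanishing of the right side then gives $(c^{N-1}-1)(c^2-1)\int_{\ol M}u_2^{N+1}\,dV_{\ol g}=0$, and since $u_2>0$ and $c>0$ each factor vanishes only at $c=1$, so $u_1=u_2$, as claimed in Lemma \ref{lem_Yamabe_solution}. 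I expect the only real obstacle to be the justification of the weak formulation across $q$ described in the second paragraph; the remainder is the elementary algebra of the Díaz--Saá identity and the monotonicity of the nonlinearity.
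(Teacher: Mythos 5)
Your proof is correct, and it takes a genuinely different route from the paper's. The paper argues via the ratio $v=u_1/u_2$: since the conformal metric $g_2=u_2^{4/(n-2)}\ol{g}$ has constant scalar curvature $Y$, conformal covariance gives $-a_n\Delta_{g_2}v=Y(v^{N}-v)$ weakly on $\ol{M}$; it then tests with the truncation $\varphi=\max\{v,1\}$ (justified by the smoothings $\varphi_\epsilon=\tfrac12\bigl(v-1+\sqrt{(v-1)^2+\epsilon^2}\bigr)$) and with $\varphi=1$, and a sign analysis on $\{v\ge 1\}$ forces $v\equiv 1$ when $Y<0$, while for $Y=0$ testing with $v$ itself gives $dv=0$. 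Your Di\'az--Sa\'a identity replaces all of this with one computation: testing the two equations against $(u_1^2-u_2^2)/u_1$ and $(u_1^2-u_2^2)/u_2$ cancels the sign-indefinite potential $\ol{R}$ exactly, and your pointwise identity $\nabla u_1\cdot\nabla\varphi_1-\nabla u_2\cdot\nabla\varphi_2=(u_1^2+u_2^2)\,|\nabla\log(u_1/u_2)|^2$ (which checks out) produces an identity whose left side is $\ge 0$ and whose right side is $\le 0$ when $Y\le 0$, so both vanish, giving $u_1=cu_2$ in both cases and $c=1$ when $Y<0$. What your route buys: no conformal change (hence no need to note that $g_2$ is again a $W^{2,p}$ metric), no truncation lemma, and the cases $Y<0$ and $Y=0$ are handled by a single computation. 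What the paper's route buys: it is the classical uniqueness argument for the negative-case Yamabe problem, and its truncation device is a standard tool in low-regularity settings. Both arguments hinge on the same genuinely delicate point, which you correctly isolated: the weak formulation must hold across the puncture $q$ for $W^{1,2}(\ol{M})$ test functions; the paper gets this from the density of $C^1_0(\ol{M}\setminus\{q\})$ verified in the proof of Lemma \ref{lem_Yamabe_solution}, and your zero-$2$-capacity cutoff argument is an equivalent way of achieving the same thing. Two minor remarks: the proposition should be read as uniqueness among \emph{positive} solutions, since Harnack presupposes nonnegativity, so positivity is better taken as a hypothesis than derived---note the paper's proof also implicitly needs $\inf u_2>0$ for $v=u_1/u_2$ to lie in $W^{1,2}\cap L^\infty$; and your opening motivation slightly mischaracterizes the alternative, since the paper's ratio argument is itself integral rather than a pointwise maximum-principle argument at $q$.
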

\begin{proof}

Let $u_1, u_2\in W^{1,2}\cap L^\infty(\ol{M})$ be two weak solutions of \eqref{eq_Yamabe_solution} or \eqref{eq_Yamabe_solution0}, respectively. Then $v:=\frac{u_1}{u_2}$ satisfies
\begin{equation}
-a_n\Delta_{g_2}v=Y(v^{\frac{n+2}{n-2}}-v),\label{eq_vunique}
\end{equation}
where $\Delta_{g_2}$ is the Laplacian for the conformal metric $g_2=u_2^{\frac{4}{n-2}} \ol{g}$; note that $g_2$ is also a $W^{2,p}$ metric.
The usual arguments in the smooth case to establish uniqueness can be adapted in the non-smooth setting.
In fact the equation \eqref{eq_vunique} holds weakly, meaning that
\begin{equation}\label{weak}\int_{\ol{M}} a_n\langle d \varphi, dv\rangle_{g_2}dV_{g_2}=Y\int_{\ol{M}}\varphi(v^{\frac{n+2}{n-2}}-v) dV_{g_2}\end{equation}
for any $\varphi\in W^{1,2}(\ol{M})$.
%If $Y(\ol{M},\ol{g})<0$ and $u_1, u_2$ are two solutions of \eqref{eq_Yamabe_solution}, then $v:=\frac{u_1}{u_2}$ satisfies \eqref{eq_vunique} weakly, meaning that:
We can then test against
$\varphi=\max\{v,1\}$ and obtain that
\begin{equation}\label{ineq}\int_{\left\{ v\ge 1\right\} } a_n|d v|^2_{g_2}dV_{g_2}=Y\int_{\left\{ v\ge 1\right\}}v \,(v^{\frac{n+2}{n-2}}-v) dV_{g_2}+Y \int_{\left\{ v\le 1\right\}} \,(v^{\frac{n+2}{n-2}}-v) dV_{g_2}.\end{equation}
This identity \eqref{ineq} follows from the truncation properties that holds in a fairly general setting (see for instance \cite[Subsection 4.1]{Strum}). In our case it can be justified as follows: let $$\varphi_\epsilon=\frac{1}{2}\left( v-1+\sqrt{(v-1)^2+\epsilon^2}\right).$$ The chain rule implies that $\varphi_\epsilon$ is also in $W^{1,2}$ and
$$d\varphi_\epsilon =\frac12 \left(1+\frac{v-1}{\sqrt{(v-1)^2+\epsilon^2}}\right) dv=\frac{\varphi_\epsilon}{\sqrt{(v-1)^2+\epsilon^2}} dv.$$
Then testing \eqref{weak} using $\varphi_\epsilon$ and letting $\epsilon\to 0+$ implies formula \eqref{ineq}. 
Next if $Y<0$, testing \eqref{weak} with $\varphi=1$ implies
that
\begin{equation}\label{int1}
\int_{\ol{M}}\left(v^{\frac{n+2}{n-2}} -v\right)dV_{g_2}=0.\end{equation}
Hence  $\int_{\left\{ v\ge 1\right\}} \,(v^{\frac{n+2}{n-2}}-v) dV_{g_2} =-\int_{\left\{ v\le 1\right\}} \,(v^{\frac{n+2}{n-2}}-v) dV_{g_2}$ and the equality \eqref{ineq} gives that 
$$\int_{\left\{ v\ge 1\right\} } a_n|d v|^2_{g_2}dV_{g_2}=Y\int_{\left\{ v\ge 1\right\}}(v-1) \,(v^{\frac{n+2}{n-2}}-v) dV_{g_2}.$$
Thus one gets that  $d\varphi=0$ and $v\le 1$, and so when $Y<0$, formula \eqref{int1} implies then that  $v=1$. When $Y=0$, we can directly test \eqref{weak} with $v$ to see that $v$ must be constant.

\end{proof}

By Lemma \ref{lem_DiltsMaxwell} along with either Lemma \ref{lem_Yamabe_solution} or Lemma \ref{lem_Yamabe_solution0}, we may now take $u_0$ to be the smooth function on $M=\ol{M}\setminus\{q\}$ given by
\begin{equation}
u_0=\phi u.\label{eq_u0}
\end{equation}
Here, $\phi$ is defined as in Lemma \ref{lem_DiltsMaxwell}, and $u$ is given by either Lemma \ref{lem_Yamabe_solution} or \ref{lem_Yamabe_solution0}, and satisfies
\[-a_n\Delta_{g}u_0+R_{g}u_0=Y(M^n,[g])u_0^{\frac{n+2}{n-2}}.\]
Note that in the asymptotically Euclidean $z^i$ coordinates on $M$ we have the sharp spatial decay estimate
\[0<C^{-1}<\frac{u_0}{\frac{1}{|z|^{n-2}}}<C\]
for some $C>0$, as $|z|\rightarrow\infty$.

We denote by $\hat{g}$ the metric on $\ol{M}$ given by $\hat{g}=u_0^{\frac{4}{n-2}} g$. Later, when we compare $u_0$ and $\tilde{u}_\infty$, we will need the existence of the Green's function associated with the Dirichlet problem for the Laplacian of $(\ol{M},\hat{g})$ on an open neighborhood about $q$. This follows from a standard result which we quote below.

\begin{Prop}[{\cite[Chapter 10, Sections 9--10]{Sauvigny}}]\label{prop_Green}
Let $g$ be a Riemannian metric which is $C^{1,\alpha}$ for some $\alpha\in(0,1)$ on the closure of a coordinate neighborhood $\Omega$ with smooth boundary.  Then for all $p\in\Omega$ there exists a function
\[G_p:\ol{\Omega}\rightarrow\mb{R}\]
such that $G_p\in W_0^{1,p}(\Omega)\cap C^{2+\alpha}(\Omega\setminus\{p\})$ for $p\in[1,\frac{n}{n-1})$ satisfying the growth conditions
\[0<G_p(x)\leq c_2 d_{\hat{g}}(p,x)^{2-n},\quad x\in\Omega,\quad x\neq p\]
and
\[G_p(x)\geq c_1 d_{\hat{g}}(p,x)^{2-n},\quad x\in\Omega,\quad d_{\hat{g}}(p,x)\leq\frac{1}{2} d_{\hat{g}}(p,\partial\Omega).\]
Moreover, in the sense of distributions $G_p$ satisfies 
\[-\Delta G_p=\delta_p.\]
\end{Prop}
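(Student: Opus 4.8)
The plan is to build $G_p$ by the classical parametrix method, read off its singularity and zero boundary values from elliptic regularity, and then pin down the two-sided bounds by comparison with the frozen-coefficient model solution together with the Harnack inequality. First I would fix the coordinate chart on $\ol{\Omega}$ and write the Laplace--Beltrami operator in divergence form,
\[
\Delta u=\frac{1}{\sqrt{\det g}}\,\partial_i\!\left(\sqrt{\det g}\,g^{ij}\partial_j u\right).
\]
Since $g$ is $C^{1,\alpha}$ on the compact set $\ol{\Omega}$, the coefficients $\sqrt{\det g}\,g^{ij}$ are $C^{1,\alpha}$ and the operator is uniformly elliptic; written in nondivergence form its top-order coefficients $g^{ij}$ are H\"{o}lder continuous. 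Freezing the coefficients at $p$ gives the constant-coefficient model operator $L_0=-g^{ij}(p)\partial_i\partial_j$, whose fundamental solution $\Gamma_p$ is an explicit constant multiple of $\big(g_{kl}(p)(x-p)^k(x-p)^l\big)^{(2-n)/2}$ and is therefore comparable to $d_g(p,x)^{2-n}$ near $p$.

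I would then produce $G_p$ in two steps. Applying $\Delta$ to $\Gamma_p$ and using that $(g^{ij}(x)-g^{ij}(p))=O(|x-p|^\alpha)$ shows that $-\Delta\Gamma_p=\delta_p+e_p$, where the error $e_p$ has a singularity strictly milder than $\delta_p$; the Levi iteration (a convergent Neumann series whose successive kernels become less singular precisely because $\alpha>0$) converts this into a genuine fundamental solution $F(\cdot,p)$ on $\Omega$ with $F(\cdot,p)-\Gamma_p$ of lower order near $p$. Subtracting the $\Delta$-harmonic function with boundary data $F(\cdot,p)|_{\partial\Omega}$ yields $G_p$ with $G_p|_{\partial\Omega}=0$ and $-\Delta G_p=\delta_p$ in the distributional sense. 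Since the leading term $\Gamma_p$ has $|\nabla\Gamma_p|\sim d_g(p,\cdot)^{1-n}$, which lies in $L^q_{loc}$ exactly when $q(n-1)<n$, and the correction has zero boundary trace, this gives $G_p\in W^{1,q}_0(\Omega)$ for every $q\in[1,\tfrac{n}{n-1})$. (Alternatively one may define $G_p$ as a weak limit of solutions of $-\Delta G_p^\epsilon=\rho_\epsilon$ with $\rho_\epsilon\to\delta_p$, using the uniform $W^{1,q}_0$ estimates of Littman--Stampacchia--Weinberger.)

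For the qualitative conclusions, positivity $G_p>0$ on $\Omega\setminus\{p\}$ is the weak maximum principle applied to the supersolution $G_p$ with nonnegative data and vanishing boundary values. For the upper bound $G_p\le c_2\,d_g(p,\cdot)^{2-n}$ I would invoke the Gr\"{u}ter--Widman estimate for divergence-form operators, or equivalently compare $G_p$ with the supersolution $c_2\Gamma_p$ on a punctured ball---H\"{o}lder continuity of the coefficients makes a suitable constant multiple of $\Gamma_p$ superharmonic near $p$---and combine this with the boundedness of $G_p$ away from $p$ via the maximum principle on annuli. For the lower bound $G_p\ge c_1\,d_g(p,\cdot)^{2-n}$ on the ball $\{d_g(p,\cdot)\le\tfrac12 d_g(p,\partial\Omega)\}$ I would iterate the Harnack inequality over dyadic annuli centered at $p$, using that the parametrix forces $G_p$ to carry exactly the singularity of $\Gamma_p$. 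Finally, away from $p$ the function $G_p$ solves $-\Delta G_p=0$ weakly with H\"{o}lder coefficients, so De Giorgi--Nash continuity followed by interior Schauder estimates gives $G_p\in C^{2+\alpha}_{loc}(\Omega\setminus\{p\})$.

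The step I expect to be the main obstacle is producing the sharp two-sided bounds with the exact exponent $2-n$: one must control the interaction between the frozen model solution $\Gamma_p$ and the true operator under only $C^{1,\alpha}$ (hence H\"{o}lder-coefficient) regularity, and then upgrade the comparison and Harnack estimates to clean pointwise inequalities with uniform constants. By contrast, the existence, the distributional identity, the $W^{1,q}_0$ membership, and the interior $C^{2+\alpha}$ regularity are comparatively routine consequences of standard elliptic theory.
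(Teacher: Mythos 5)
The paper offers no proof of this proposition to compare against: it is quoted wholesale as a standard result from \cite[Chapter 10, Sections 9--10]{Sauvigny}, and the surrounding text only remarks that the metric in the intended application is $C^{1,\alpha}$ near the added point $q$ and smooth elsewhere. Your proposal is therefore a genuine reconstruction, and in outline it is the standard and correct one: in nondivergence form the operator has uniformly elliptic $C^{1,\alpha}$ leading coefficients and $C^{0,\alpha}$ first-order coefficients, so Levi's parametrix iteration applies; subtracting the harmonic extension of the boundary values gives the distributional identity and zero boundary data; and the $W^{1,q}_0$ membership for $q<\tfrac{n}{n-1}$, positivity, and interior $C^{2+\alpha}$ regularity (Schauder, away from the pole) follow as you say. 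For the two-sided bounds, your invocation of Gr\"uter--Widman is apt and in fact does almost all the work by itself: $-\Delta_{\hat g}$ is exactly a symmetric divergence-form operator $-\partial_i(\sqrt{\det g}\,g^{ij}\partial_j\,\cdot\,)$ with bounded measurable coefficients, so their theorem directly yields existence, the distributional equation, the $W^{1,q}_0$ estimates, and pointwise bounds $c_1|x-p|^{2-n}\leq G_p\leq c_2|x-p|^{2-n}$ of precisely the stated form, with constants even uniform in the pole --- more than the paper needs, since in its application the pole is the fixed point $q$.

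Two of your subsidiary mechanisms would need repair if written out. First, no constant multiple of $\Gamma_p$ is superharmonic near $p$: one has $-\Delta(c\Gamma_p)=c\delta_p+c\,e_p$ with $e_p=O(|x-p|^{1-n})$ of indefinite sign, and rescaling cannot fix a sign; the standard comparison function is additive, e.g. $\Gamma_p+A|x-p|^{2-n+\beta}$ with $\beta\in(0,1)$, whose extra singularity $\sim|x-p|^{\beta-n}$ dominates $e_p$ and makes a genuine supersolution on a small punctured ball. (This is harmless, since your primary route for the upper bound is Gr\"uter--Widman.) Second, iterating Harnack over dyadic annuli loses a factor $c_H^{-1}$ per scale while the target $r^{2-n}$ only decreases by $2^{2-n}$ per scale, and the annular Harnack constant satisfies $c_H\geq 2^{n-2}$ (with equality already forced by $|x|^{2-n}$ for the flat Laplacian, and strict inequality in general); so the iteration produces a constant $c_1$ that decays with the number of scales, i.e.\ depends on $d_{\hat g}(p,\partial\Omega)$. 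That is still sufficient for the statement as written, where $c_1$ is not claimed uniform in $p$, and is certainly sufficient for the paper's use with fixed pole $q$; but to get uniform constants one should argue as Gr\"uter--Widman do, combining the unit flux of $\nabla G_p$ through small spheres about $p$ with the gradient bound $|\nabla G_p|\leq C G_p/r$ on annuli and a single-scale Harnack inequality, rather than iterating across scales.
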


\section{The case $Y<0$}\label{sec_Y<0}

In the case $Y(M^n,[g_0])<0$, we compare the conformal factor $u_0$ as defined in \eqref{eq_u0} to the rescaled Yamabe flow solution $\tilde{u}$ as defined in \eqref{eq_utilde}, and find that $\tilde{u}$ in fact converges to a scalar multiple of $u_0$, which will prove Theorem \ref{rescalednega}. For convenience we will often write $Y$ instead of $Y(M^n,[g_0])$. We first show that $|Y|^{\frac{n-2}{4}}u_0$ always bounds $\tilde{u}$ from below.

%Then there exists $u_0(x)$ such that the metric $g_{u_0}=u_0^{\frac{4}{n-2}}g_0$ has constant scalar curvature $-1$, with $\lim_{|x|\rightarrow\infty}u_0(x)=0$. (Remark: $u_0(x)$ is the unique conformal factor that compactifies $(M^n, g_0)$. However, we have not used this compactification property of $u_0$ so far. We've only used the metric $g_{u_0}=u_0^{\frac{4}{n-2}}g_0$ has constant scalar curvature $-1$, with $\lim_{|x|\rightarrow\infty}u_0(x)=0$.)

\begin{Lem}\label{lower_bd_lem}
If $Y<0$, then $|Y|^{\frac{n-2}{4}}u_0(x)\leq \tilde{u}(x,t) $ for all times $t$.
\end{Lem}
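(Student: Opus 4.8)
The plan is to produce an explicit solution of \eqref{flow_equation} that decays at infinity and lies below $u$, and then to invoke the comparison principle. Set $c=|Y|^{\frac{n-2}{4}}$ and define $v(x,t)=c\,t^{\frac{n-2}{4}}u_0(x)$, where $u_0$ is the conformal factor from \eqref{eq_u0} satisfying $L_{g_0}u_0=Yu_0^N$. I claim $v$ is in fact an \emph{exact} solution of \eqref{flow_equation}. Indeed, since $\frac{n-2}{4}\cdot N=\frac{n+2}{4}$, we have $v^N=c^N t^{\frac{n+2}{4}}u_0^N$ and hence $\partial_t(v^N)=\frac{n+2}{4}c^N t^{\frac{n-2}{4}}u_0^N$, while the right-hand side of \eqref{flow_equation} equals $-\frac{n+2}{4}L_{g_0}v=-\frac{n+2}{4}cY\,t^{\frac{n-2}{4}}u_0^N$. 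These agree precisely when $c^{N-1}=-Y=|Y|$, that is, when $c=|Y|^{\frac{n-2}{4}}$, which is exactly our choice (the special exponent $\frac{n-2}{4}$ is what makes the powers of $t$ match after differentiating).

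With $v$ an exact solution, I would apply the comparison principle of Lemma \ref{max_principle_lem} with $u_1=v$ and $u_2=u$. Two of its hypotheses are immediate: at $t=0$ we have $v(\cdot,0)\equiv 0\le 1=u(\cdot,0)$, giving condition \eqref{max_cond3}; and since $u_0=O(|z|^{2-n})$ decays to zero while $u\to 1$ uniformly in space, we obtain $\lim_{|x|\to\infty}v=0<1=\lim_{|x|\to\infty}u$ with the limits achieved uniformly on $[0,T]$, giving condition \eqref{max_cond2}. The conclusion $v\le u$ on $M\times[0,\infty)$ then reads $c\,t^{\frac{n-2}{4}}u_0\le u$, i.e. $|Y|^{\frac{n-2}{4}}u_0\le\tilde u$, as desired.

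The one hypothesis requiring care, and where I expect the main difficulty to lie, is the uniform two-sided bound, condition (1) of Lemma \ref{max_principle_lem}: the function $v$ is \emph{not} bounded below by a positive constant, since $v\to 0$ both at spatial infinity and as $t\to 0^+$. The resolution is to inspect the linear equation satisfied by the difference $w=v-u$: writing $v^N-u^N=a\,w$ with $a=N\int_0^1\big((1-s)u+sv\big)^{N-1}\,ds$, linearity of $L_{g_0}$ turns \eqref{flow_equation} into $\partial_t(aw)=-\frac{n+2}{4}L_{g_0}w$, a parabolic equation for $w$ whose ellipticity is governed by $a$. Crucially, the integrand is at least $\big((1-s)u\big)^{N-1}$, so $a\ge u^{N-1}$; since the Yamabe flow solution satisfies $u\ge C_T^{-1}>0$ on $M\times[0,T]$, the coefficient $a\ge C_T^{-(N-1)}$ is bounded away from zero there using the lower bound on $u$ alone, independently of how small $v$ is. Hence the equation for $w$ is uniformly parabolic with bounded coefficients on compact space--time domains, and---after localizing to a large ball $B_\rho$ on whose lateral boundary $v\le u$ holds for $\rho$ large by the uniform separation at infinity---the linear maximum principle \cite[\S3.3]{Protter} applies exactly as in the proof of Lemma \ref{max_principle_lem}. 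If one prefers to avoid $v=0$ at $t=0$, the same argument may be run with the shifted exact solutions $v_\epsilon=|Y|^{\frac{n-2}{4}}(t+\epsilon)^{\frac{n-2}{4}}u_0$, letting $\epsilon\to 0^+$ at the end. This yields $v\le u$ and completes the proof.
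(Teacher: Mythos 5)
Your proof is correct and is essentially the paper's own argument: both compare $u$ with the explicit exact solution $(t|Y|)^{\frac{n-2}{4}}u_0$ of \eqref{flow_equation} (you verify exactness via $L_{g_0}u_0=Yu_0^{N}$, the paper via the equivalent computation $R_{g_{u_1}}=-t^{-1}$) and then conclude via the comparison principle of Lemma \ref{max_principle_lem}. Your additional care about condition (1) --- observing that $v$ has no uniform positive lower bound and repairing this with the coefficient bound $a\geq u^{N-1}\geq C_T^{-(N-1)}$, localization to large balls, and the $\epsilon$-shifted solutions $|Y|^{\frac{n-2}{4}}(t+\epsilon)^{\frac{n-2}{4}}u_0$ --- is a refinement of, not a departure from, the paper's route, which disposes of the same issue only with the parenthetical remark that the comparison may be started at a sufficiently small positive time.
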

\begin{proof}
Let $u_1(x,t)=(t|Y|)^{\frac{n-2}{4}}u_0(x)$, and define $g_{u_1}=u_1^{\frac{4}{n-2}}g_0$. Then we have $R_{g_{u_1}}=(t|Y|)^{-1}R_{g_{u_0}}=-t^{-1}$, and
\[\frac{\partial}{\partial t} u_1=\frac{n-2}{4}t^{-1}(t|Y|)^{\frac{n-2}{4}}u_0(x)=-\frac{n-2}{4}R_{g_{u_1}} (t|Y|)^{\frac{n-2}{4}}u_0=-\frac{n-2}{4}R_{g_{u_1}}u_1.\]
Equivalently, $u_1$ is a solution of the Yamabe flow. Therefore by the asymptotics of $u_0,u_1$, we can apply the comparison principle of Lemma \ref{max_principle_lem} to conclude that $u_1(x,t)\leq u(x,t)$ on $M\times[0,\infty)$ (we know $u_1\leq u$ for small times, and $u_1\leq u$ as $|x|\rightarrow \infty$ for each fixed $t>0$, so we can apply the comparison starting at a sufficiently small positive time). Rewriting, we see that $|Y|^{\frac{n-2}{4}}u_0(x)\leq t^{-\frac{n-2}{4}}u(x,t)=\tilde{u}(x,t)$.
\end{proof}

By Proposition \ref{upper_bd_lem} and Lemma \ref{lower_bd_lem} together, we see that the pointwise limit $\tilde{u}_\infty(x)$ of $\tilde{u}(x,t)$ as $t\rightarrow\infty$ satisfies $\tilde{u}_\infty(x)\geq |Y|^{\frac{n-2}{4}}u_0(x)$.
%\section{Convergence of $\tilde{u}(x,t)$}\label{sec_convergence}
%We have already seen that $\tilde{u}(x, t)$ monotonically decreases to a pointwise limit $\tilde{u}_\infty(x)$ above as $t\rightarrow \infty$.
We now continue our study of the convergence of $\tilde{u}(x, t)$ and properties of $\tilde{u}_\infty(x)$. 

\begin{Prop}\label{prop_utildeconverge}
We have that $\tilde{u}(x,t)$ converges to $\tilde{u}_\infty(x)$ in $C^{k,\alpha}_{loc}$ as $t\rightarrow\infty$, and $\tilde{u}_\infty(x)$ satisfies
\begin{equation}
-a_n\Delta_{g_0} \tilde{u}_\infty+R_{g_0} \tilde{u}_\infty=-\tilde{u}_\infty^N.\label{eq_steady}
\end{equation}
\end{Prop}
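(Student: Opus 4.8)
The plan is to recast the flow as an autonomous quasilinear parabolic equation for $\tilde u$, establish uniform interior parabolic estimates on compact sets using the two-sided bounds already available, and then pass to the limit. First I would substitute $u = t^{\frac{n-2}{4}}\tilde u$ into \eqref{flow_equation} and change the time variable to $s=\log t$, so that $t\,\partial_t=\partial_s$. Using the identity $N\cdot\frac{n-2}{4}=\frac{n+2}{4}$ (hence $u^N=t^{\frac{n+2}{4}}\tilde u^N$), a direct computation converts \eqref{flow_equation} into $\partial_s(\tilde u^N)=-\frac{n+2}{4}\bigl(L_{g_0}\tilde u+\tilde u^N\bigr)$, equivalently
\[
\partial_s\tilde u=\frac{n-2}{4}\,\tilde u^{1-N}\bigl(a_n\Delta_{g_0}\tilde u-R_{g_0}\tilde u-\tilde u^N\bigr),\qquad 1-N=-\tfrac{4}{n-2}.
\]
The point of this reformulation is that the coefficients are now independent of $s$, and on any region where $\tilde u$ is bounded above and bounded away from zero the factor $\tilde u^{1-N}$ is uniformly positive and bounded, so the equation is uniformly parabolic.

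Next I would record the two-sided bounds that make this work. On any compact $K\subset M$ and any $t_0>0$, Lemma \ref{lower_bd_lem} gives $\tilde u(x,t)\geq |Y|^{\frac{n-2}{4}}u_0(x)\geq c_K>0$, while the monotonicity established in Proposition \ref{upper_bd_lem} gives $\tilde u(x,t)\leq\tilde u(x,t_0)\leq C_K$ for $t\geq t_0$. Hence on $K\times[s_0,\infty)$ (in the $s$ variable) the equation above is uniformly parabolic, with coefficients controlled by the background metric $g_0\in C^{k+\alpha}_{-\tau}$.

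With uniform parabolicity in hand, I would apply interior parabolic Schauder estimates, together with a standard bootstrap using the $C^{k+\alpha}$ regularity of $g_0$, to bound $\tilde u$ in the parabolic Hölder space $C^{k+\alpha}$ on compact space--time cylinders, uniformly in $s$. By Arzel\`a--Ascoli these bounds give precompactness of the family $\{\tilde u(\cdot,s)\}_{s\geq s_0}$ in $C^{k,\alpha'}_{loc}$ for every $\alpha'<\alpha$. Since $\tilde u(\cdot,s)$ decreases monotonically and converges pointwise to $\tilde u_\infty$, every $C^{k,\alpha'}_{loc}$-convergent subsequence must have limit $\tilde u_\infty$, so the usual subsequence argument upgrades the pointwise convergence to convergence of the whole family in $C^{k,\alpha'}_{loc}$ (which is what Theorem \ref{rescalednega} requires).

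Finally, to identify the limiting equation \eqref{eq_steady} I would examine the time translates $\tilde u(\cdot,s_j+\sigma)$ for $s_j\to\infty$ and $\sigma$ in a fixed interval. Monotonicity yields $\int_{s_0}^\infty|\partial_s\tilde u(x,s)|\,ds=\tilde u(x,s_0)-\tilde u_\infty(x)<\infty$, so the time derivative cannot persist; combined with the uniform parabolic estimates, these translates converge to the $\sigma$-independent limit $\tilde u_\infty$, forcing $\partial_s\tilde u$ to vanish in the limit. The equation above then reduces to $a_n\Delta_{g_0}\tilde u_\infty-R_{g_0}\tilde u_\infty-\tilde u_\infty^N=0$, which is exactly \eqref{eq_steady}. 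I expect the main obstacle to be the regularity step: guaranteeing that the parabolic estimates are uniform over the infinite time interval, i.e.\ that the equation does not degenerate as $s\to\infty$. This is precisely where the lower bound of Lemma \ref{lower_bd_lem} and the monotone upper bound are indispensable, since they confine $\tilde u$ to a compact range away from both degeneracies $\tilde u=0$ and $\tilde u=\infty$.
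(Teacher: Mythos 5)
Your proposal is correct and follows essentially the same route as the paper's proof: the substitution $t=e^{s}$, the two-sided bounds from Lemma \ref{lower_bd_lem} and the monotonicity in Proposition \ref{upper_bd_lem}, uniform parabolic regularity (the paper invokes Krylov--Safonov plus higher-order Schauder, which is what your ``standard bootstrap'' needs to start from, since the coefficient $\tilde u^{1-N}$ is a priori only bounded), Arzel\`a--Ascoli, and monotonicity to upgrade subsequential to full convergence and to identify the steady-state equation \eqref{eq_steady}.
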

\begin{proof}
Transform the time parameter $t$ to $s$ via the relation $t=e^s$, for $s\in(-\infty,\infty)$. Then
\begin{align*}
\frac{\partial}{\partial s}\tilde{u}^N&=e^s\frac{\partial}{\partial t}\left( t^{-\frac{n+2}{4}} u^N\right)
\\
&=t\left(-\frac{n+2}{4}t^{-\frac{n+2}{4}-1} u^N+ t^{-\frac{n+2}{4}}\frac{n+2}{4}\left(a_n\Delta_{g_0} u-R_{g_0} u\right)\right)
\\
&=\frac{n+2}{4}\left(a_n\Delta_{g_0} \tilde{u}-R_{g_0} \tilde{u}-\tilde{u}^N\right).
\end{align*}
Since $\tilde{u}$ is monotonically decreasing and bounded from below by $|Y|^{\frac{n-2}{4}}u_0$, we have that $\tilde{u}$ is bounded from above and below uniformly in time for $t>0$ on any compact region $\Omega\subset M$. Therefore from the above computation we see that $\tilde{u}(x,e^s)$ satisfies a uniformly parabolic equation on any such $\Omega$, $s\in (s_0, +\infty)$ for $s_0\in\mb{R}$, and therefore by the Krylov--Safonov and (higher order) Schauder estimates for parabolic equations
\[\|\tilde{u}(x,e^s)\|_{C^{k,\alpha}(\Omega\times[s,s+1])}\leq C(\Omega).\]
Consequently by Arzela-Ascoli we obtain for some sequence $\{t_j\}$ with $t_j\rightarrow\infty$ that
\[\tilde{u}(x,t_j)\rightarrow \tilde{u}_\infty(x)\]
in $C^{k,\alpha'}_{loc}$, for any $\alpha'<\alpha$. We can deduce more generally since $\tilde{u}$ is monotonically decreasing that $\tilde{u}(x,t)$ converges to $\tilde{u}_\infty$ in $C^{k,\alpha'}_{loc}$ as $t\rightarrow\infty$. As a result, we see that $\tilde{u}_\infty$ satisfies the steady state equation \eqref{eq_steady}, which is also satisfied by $u_0$. 
\end{proof}

We now check that $\tilde{u}_\infty$ also decays at spatial infinity.

\begin{Lem}\label{lem_compactify}
We have $\tilde{u}_\infty(x)\rightarrow 0$ as $|x|\rightarrow\infty$.
\end{Lem}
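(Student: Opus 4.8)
The plan is to exploit the monotonicity of $\tilde{u}(x,t)$ in $t$ together with the spatial asymptotics of $u(x,t)$ at each fixed time, so as to decouple the two limits $t\to\infty$ and $|x|\to\infty$ rather than confront them simultaneously. The whole argument is soft; no new PDE estimate is needed beyond what has already been established.

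First I would record the pointwise comparison coming from Proposition \ref{upper_bd_lem}: since $\tilde{u}(x,t)=t^{-\frac{n-2}{4}}u(x,t)$ is nonincreasing in $t$, its limit satisfies
\[0\leq\tilde{u}_\infty(x)\leq\tilde{u}(x,t)\qquad\text{for every }t>0,\ x\in M.\]
Here only $\tilde{u}_\infty\geq 0$ (from $\tilde{u}\geq 0$) and the monotone upper bound are used. Next, I would fix $t>0$ and pass to spatial infinity in the second factor. By the standing normalization $\lim_{|x|\to\infty}u(x,t)=1$, which holds for every $t$ because asymptotic flatness is preserved along the flow (Theorem 1.3 of \cite{CZ}), we get $\lim_{|x|\to\infty}\tilde{u}(x,t)=t^{-\frac{n-2}{4}}$, and therefore
\[\limsup_{|x|\to\infty}\tilde{u}_\infty(x)\leq\limsup_{|x|\to\infty}\tilde{u}(x,t)=t^{-\frac{n-2}{4}}.\]

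Finally, this bound is valid for every $t>0$, so letting $t\to\infty$ gives $\limsup_{|x|\to\infty}\tilde{u}_\infty(x)\leq 0$. Combined with $\tilde{u}_\infty\geq 0$ this yields $\lim_{|x|\to\infty}\tilde{u}_\infty(x)=0$, as desired.

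The only point requiring care is the ordering of the two limits, which is a genuine (if mild) double-limit issue: I sidestep it by first taking the spatial limit at a fixed time, where the normalization $u\to 1$ applies cleanly, and only afterward sending $t\to\infty$, with the monotonicity inequality $\tilde{u}_\infty\leq\tilde{u}(\cdot,t)$ serving as the bridge between the two. In particular no uniformity of the spatial decay of $u$ across different times is required, which is what keeps the argument elementary. (This qualitative decay is exactly what is then needed to feed $\tilde{u}_\infty$ into the compactification Lemmas \ref{lem_DiltsMaxwell}--\ref{lem_Yamabe_solution} and upgrade to the sharp rate $O(|x|^{2-n})$.)
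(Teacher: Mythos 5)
Your proof is correct and is essentially the paper's argument: both rest on the monotone bound $\tilde{u}_\infty(x)\leq \tilde{u}(x,t)=t^{-\frac{n-2}{4}}u(x,t)$ together with the fixed-time normalization $u(\cdot,t)\to 1$ at spatial infinity, followed by sending $t\to\infty$. The only difference is presentational --- the paper phrases it as a contradiction with a sequence $\{x_i\}$ and the threshold $t^{\frac{n-2}{4}}\epsilon\geq 2$, while you give the direct $\limsup$ version --- which is the same idea.
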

\begin{proof}
Suppose the property does not hold. Then there exists a sequence of points $\{x_i\}$ with $|x_i|\rightarrow\infty$ and some $\epsilon>0$ such that $\tilde{u}_\infty(x_i)\geq\epsilon>0$. Since $\tilde{u}$ decreases monotonically to $\tilde{u}_\infty$, this implies
\[\epsilon\leq\tilde{u}_\infty(x_i)\leq t^{-\frac{n-2}{4}}u(x_i,t),\quad\text{for all }x_i\text{ and }t>0.\]
For $t$ sufficiently large, we therefore have
\[2\leq t^{\frac{n-2}{4}}\epsilon\leq u(x_i,t),\]
which contradicts the fact that $u(x,t)\xrightarrow{|x|\rightarrow\infty}1$ for all $t>0$.
\end{proof}

%{\it Question:
%Our main problem now is how to prove  $\tilde{u}_\infty$ is the same as $u_0$, or is it possible that $\tilde{u}_\infty$ is different from $u_0$: If we knew $\tilde{u}_\infty$ has the same asymptotic behavior as $u_0$ at $\infty$, then we can take the quotient and use the maximum principle. Without asymptotic behavior, we do not know how to apply the maximum principle at infinity.
%}

Because $\tilde{u}_\infty\xrightarrow{|z|\rightarrow\infty}0$, we will have on $(\ol{M},\hat{g})$ that
\begin{equation}
\left(\frac{\tilde{u}_\infty}{|Y|^{\frac{n-2}{4}}u_0}\right)(p)=o(d(p,q)^{2-n})\quad\text{as }p\rightarrow q.\label{eq_udecay}
\end{equation}
Using this and our previous estimates, we can now prove Theorem \ref{rescalednega}.
%conclude that $\tilde{u}_\infty=u_0$. 

%\section{Compactifying $M$}

\begin{proof}[Proof of Theorem \ref{rescalednega}]
In light of the above results, and using the same notation as before, it suffices to show that $\tilde{u}_\infty=|Y|^{\frac{n-2}{4}}u_0$. Recall that $(\ol{M},\hat{g})$ has constant scalar curvature $\hat{R}=Y<0$. Since $R(\tilde{u}_\infty^{\frac{4}{n-2}} g_0)=-1$ by Proposition \ref{prop_utildeconverge}, we have that $v:=\frac{\tilde{u}_\infty}{|Y|^{\frac{n-2}{4}}u_0}$ satisfies
\begin{equation}
-a_n\hat{\Delta} v+Yv=Yv^{\frac{n+2}{n-2}}\label{eq_vPDE}
\end{equation}
on $\ol{M}\setminus\{q\}$. We will show that $v\equiv 1$. Recall from Lemma \ref{lower_bd_lem} that we already know $v\geq 1$. %Therefore by \eqref{eq_vPDE} we have that
%\[-\hat{\Delta}v\leq 0.\]
%Let $G_q$ be the Green's function associated with the Laplacian of $(\ol{M},\hat{g})$ with pole at $q$, which satisfies

Fix a small neighborhood $U$ about $q$, and let $G_q$ be the Green's function of $\hat{\Delta}$ with pole at $q$ associated with the Dirichlet problem on $U$, as provided by Proposition \ref{prop_Green} (recall that $\hat{g}$ is $C^{1,\alpha}$ on $\ol{M}$ and smooth away from $q$). We will use the following properties of $G_q$:
\[-\hat{\Delta}G_q=\delta_q\quad\text{and}\quad G_q\geq 0\quad\text{ on }U,\quad G_q(p)\sim d(p,q)^{2-n}\quad\text{near }q.\]
Now we have for any $c\geq0$ and any $\epsilon>0$ that
\[-a_n\hat{\Delta}(v-1-c-\epsilon G_q)=-Yv+Yv^{\frac{n+2}{n-2}}\leq 0,\quad\text{on }U\setminus\{q\},\]
and since $v$ is smooth away from $q$ we can choose $c=\sup_{\partial U}v-1\geq 0$ so that
\[v-1-c-\epsilon G_q\leq v-1-c\leq 0\quad\text{on }\partial U.\]
Moreover, by our estimates on the growth of $v$ and $G_q$ near $q$, for any $\epsilon>0$ we can find a small neighbhorhood $V$ with $q\in V\subset U$ such that
\[v-1-c-\epsilon G_q\leq v-\epsilon G_q\leq 0\quad\text{on }\partial V.\]
Using the maximum principle, we have 
\[v-1-c\leq 0\quad\text{on } U\setminus V.\]
Now taking $\epsilon\rightarrow 0$, as $V$ can be chosen as a small ball tending to the point $q$, we conclude that
\[v-1-c\leq 0\quad\text{on } U\setminus\{q\}.\]
Therefore
\[\sup_{\ol{U}\setminus\{q\}}v-1=\sup_{\partial U}v-1.\]
As a result we deduce that $v-1$ achieves its maximum on $\ol{M}\setminus\{q\}$.

From this information we can now conclude that $v\equiv 1$. Indeed, if not then by the above we can take $p\in\ol{M}\setminus\{q\}$ where $v$ achieves its maximum, $v(p)>1$. But then \eqref{eq_vPDE} tells us that
\[(-a_n\hat{\Delta}v)(p)=-Yv(p)+Yv(p)^{\frac{n+2}{n-2}}<0,\]
which is impossible.

Finally, the uniqueness of $u_0$ is provided by Proposition \ref{prop_Yamabe_unique}. As a result, we have the convergence of $\tilde{u}$ as $t\rightarrow\infty$, and not just convergence up to subsequences.
\end{proof}

\section{The case $Y=0$}\label{sec_Y=0}

We now consider the remaining case in which $Y(M^n,[g_0])=0$, an assumption which will be implicit throughout this section, and prove Theorems \ref{rescaledzero} and \ref{rescaledzerolimit}. Let $u(x,t)$ be the solution to the Yamabe flow equation \eqref{flow_equation} as before. 

\subsection{Strictly slower blowup}\label{subsec_rescaledzero}

We have seen in the case $Y(M^n,[g_0])<0$ that $u(x,t)$ blows up at exactly the rate $t^{\frac{n-2}{4}}$. We will now see that in the case $Y(M^n,[g_0])=0$, the function $u(x,t)$ blows up less quickly, so that normalizing by $t^{-\frac{n-2}{4}}$ as we did now gives a limit which is uniformly zero, proving Theorem \ref{rescaledzero}.

Once again let $\tilde{u}_\infty$ be the pointwise limit of $\tilde{u}(x,t)=t^{-\frac{n-2}{4}}u(x,t)$ as discussed at the end of Section \ref{subsec_growth_control}. We first check that $\tilde{u}_\infty$ again satisfies an elliptic equation.
%, and this combined with the result of Proposition \ref{prop_Y0_zero} will imply that $\tilde{u}_\infty\equiv 0$.

\begin{Lem}\label{lem_Y0_equation}
The function $\tilde{u}_\infty$ satisfies
\begin{equation}\label{eqn_Y0_equation}
-a_n\Delta_{g_0}\tilde{u}_\infty+R_{g_0}\tilde{u}_\infty=-\tilde{u}_\infty^{\frac{n+2}{n-2}}.
\end{equation}
in the weak sense.
\end{Lem}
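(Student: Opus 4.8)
The plan is to reuse the $s$-time evolution equation derived in the proof of Proposition \ref{prop_utildeconverge}, but, because we no longer have a positive lower bound on $\tilde{u}$, to pass to the limit in a weak rather than a classical sense. Setting $t=e^s$ exactly as before, $\tilde{u}(x,e^s)$ satisfies
\[
\frac{\partial}{\partial s}\tilde{u}^N=\frac{n+2}{4}\left(a_n\Delta_{g_0}\tilde{u}-R_{g_0}\tilde{u}-\tilde{u}^N\right),
\]
and this is the only structural input needed. We still have from Proposition \ref{upper_bd_lem} that $\tilde{u}(\cdot,t)$ is nonincreasing in $t$, hence, for $t$ bounded below, uniformly bounded above on every compact set, while the monotone pointwise limit $\tilde{u}_\infty$ exists.

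First I would fix a test function $\varphi\in C_0^\infty(M)$, multiply the displayed equation by $\varphi$, and integrate over $M$; since $\tilde{u}(\cdot,t)$ is smooth for each fixed $t$ and $\varphi$ has compact support, integrating the Laplacian term by parts twice gives
\[
\frac{d}{ds}\int_M \tilde{u}^N\varphi\,dV_{g_0}=\frac{n+2}{4}\int_M\left(a_n\tilde{u}\,\Delta_{g_0}\varphi-R_{g_0}\tilde{u}\,\varphi-\tilde{u}^N\varphi\right)dV_{g_0}.
\]
Integrating this over a time window $s\in[s_0,s_0+T]$ of fixed length $T>0$ converts the left-hand side into the boundary difference $\int_M\tilde{u}^N(\cdot,e^{s_0+T})\varphi\,dV_{g_0}-\int_M\tilde{u}^N(\cdot,e^{s_0})\varphi\,dV_{g_0}$.

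Next I would let $s_0\to\infty$. On the support of $\varphi$ the functions $\tilde{u}(\cdot,e^s)$ are bounded and decrease pointwise to $\tilde{u}_\infty$, so by dominated convergence $\int_M\tilde{u}^N(\cdot,e^{s})\varphi\,dV_{g_0}\to\int_M\tilde{u}_\infty^N\varphi\,dV_{g_0}$; hence the boundary difference tends to $0$. By the same dominated convergence argument the spatial integral on the right converges, as $s\to\infty$, to the $s$-independent quantity $\int_M(a_n\tilde{u}_\infty\Delta_{g_0}\varphi-R_{g_0}\tilde{u}_\infty\varphi-\tilde{u}_\infty^N\varphi)\,dV_{g_0}$, so its average over the window $[s_0,s_0+T]$ converges to the same value. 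Combining these, the identity becomes $0=\tfrac{n+2}{4}T\int_M(a_n\tilde{u}_\infty\Delta_{g_0}\varphi-R_{g_0}\tilde{u}_\infty\varphi-\tilde{u}_\infty^N\varphi)\,dV_{g_0}$; dividing by $T$ and recognizing the resulting identity (valid for every $\varphi$) as the weak form of \eqref{eqn_Y0_equation} finishes the proof.

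The main obstacle is precisely the absence of a positive lower bound for $\tilde{u}$, which in the $Y<0$ case came from $|Y|^{\frac{n-2}{4}}u_0$ and made the $s$-time equation uniformly parabolic; here the equation may degenerate, so one cannot invoke the Krylov--Safonov and Schauder estimates to upgrade to $C^{k,\alpha}_{loc}$ convergence as in Proposition \ref{prop_utildeconverge}. The weak formulation circumvents this, the only real care being the justification of the two limit passages by monotone and dominated convergence, which relies on the uniform boundedness of $\tilde{u}$ on compact sets together with its monotone pointwise convergence to $\tilde{u}_\infty$.
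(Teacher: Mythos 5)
Your proof is correct. It shares the paper's core strategy---test a time-integrated form of the flow equation against $\varphi\in C_0^\infty(M)$ and pass to the limit using the monotonicity of $\tilde{u}$ together with dominated convergence on the compact support of $\varphi$, thereby sidestepping the absence of a positive lower bound (and hence of uniform parabolicity and Krylov--Safonov/Schauder estimates)---but the decomposition is genuinely different. The paper stays with the original $t$-equation and integrates over the whole history $[0,t]$: after dividing by $t^{\frac{n+2}{4}}$, the endpoint term $A(t)$ produces $\int_M\tilde{u}_\infty^N\varphi\,dV_{g_0}$, while the bulk term $B(t)$ is handled by the substitution $w=s/t$, which converts it into $-\frac{n+2}{4}\int_M\bigl(\int_0^1\tilde{u}(x,wt)w^{\frac{n-2}{4}}\,dw\bigr)L_{g_0}\varphi\,dV_{g_0}$ and requires dominated convergence in the auxiliary variable $w$ together with the identity $\int_0^1 w^{\frac{n-2}{4}}\,dw=\frac{4}{n+2}$. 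You instead pass to logarithmic time $s=\log t$, so that the nonlinear term $-\tilde{u}^N$ appears explicitly in the equation, and integrate over a sliding window $[s_0,s_0+T]$ with $s_0\to\infty$: the two endpoint terms cancel in the limit because both converge to $\int_M\tilde{u}_\infty^N\varphi\,dV_{g_0}$, and the full weak equation is then read off from the window average. Your route is somewhat more self-contained---no change of variables, no appeal to the initial data, and only the tail of the flow enters---whereas the paper's route avoids the reparametrization and obtains the nonlinear term ``for free'' from the boundary term; both limit passages rest on exactly the same monotonicity and local boundedness of $\tilde{u}$, and both arrive at the same weak identity $-\int_M\tilde{u}_\infty L_{g_0}\varphi\,dV_{g_0}=\int_M\tilde{u}_\infty^N\varphi\,dV_{g_0}$.
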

\begin{proof}
Starting from the evolution equation \eqref{flow_equation} satisfied by $u$, by integrating against any $\varphi\in C_0^\infty(M)$ and dividing by $t^{\frac{n+2}{4}}$, we have
\begin{align*}
&\underbrace{t^{-\frac{n+2}{4}}\left(\int_M u(x,t)^{\frac{n+2}{n-2}}\varphi(x)\ dV_{g_0}-\int_M u(x,0)^{\frac{n+2}{n-2}}\varphi(x)\ dV_{g_0}\right)}_{A(t)}
\\
&=\underbrace{-\frac{n+2}{4}t^{-\frac{n+2}{4}}\int_0^t\int_Mu(x,s) L_{g_0}\varphi(x)\ dV_{g_0}\ ds}_{B(t)}.
\end{align*}
Clearly we have
\[\left|\int_M\tilde{u}_\infty(x)^{\frac{n+2}{n-2}}\varphi(x)\ dV_{g_0}-A(t)\right|\xrightarrow{t\rightarrow\infty}0.\]

We next claim that
\begin{equation}
\left|-\int_M\tilde{u}_\infty(x) L_{g_0}\varphi(x)\ dV_{g_0}-B(t)\right|\xrightarrow{t\rightarrow\infty}0.
\end{equation}
To see this, we compute
\begin{align*}
&-\frac{n+2}{4}t^{-\frac{n+2}{4}}\int_0^t\int_Mu(x,s) L_{g_0}\varphi(x)\ dV_{g_0}\ ds
\\
&=-\frac{n+2}{4}t^{-1}\int_0^t\int_M\tilde{u}(x,s)\frac{s^{\frac{n-2}{4}}}{t^{\frac{n-2}{4}}}L_{g_0}\varphi(x)\ dV_{g_0}\ ds
\\
&=-\frac{n+2}{4}\int_M\left(\int_0^1\tilde{u}(x,wt)w^{\frac{n-2}{4}}\ dw\right)L_{g_0}\varphi(x)\ dV_{g_0}
\\
&\xrightarrow{t\rightarrow\infty}-\frac{n+2}{4}\int_M\frac{4}{n+2}\tilde{u}_\infty(x)L_{g_0}\varphi(x)\ dV_{g_0}
\\
&\quad\qquad=-\int_M\tilde{u}_\infty(x)L_{g_0}\varphi(x)\ dV_{g_0}.
\end{align*}
where above in the third line we substituted $w=s/t$, and in the fourth line we used that $\tilde{u}(x,wt)w^{\frac{n-2}{4}}$ is integrable (for any $t>0$) and monotonically decreases to $\tilde{u}_\infty(x)w^{\frac{n-2}{4}}$ as $t\rightarrow\infty$ to apply the dominated convergence theorem.
\end{proof}

We can now prove Theorem \ref{rescaledzero}.

% \begin{Prop}\label{prop_littleo}
% We must have $\tilde{u}_\infty\equiv 0$. In particular, $u(x,t)=o(t^{\frac{n-2}{4}})$.
% \end{Prop}
\begin{proof}[Proof of Theorem \ref{rescaledzero}]
Proceeding with an argument similar to that used in the proof of Theorem \ref{rescalednega}, we fix a small neighborhood $U$ about $q$ and let $G_q$ be the Green's function of $\hat{\Delta}$ with pole at $q$ associated with the Dirichlet problem on $U$. Then for any $c\geq 0$ we have that
\begin{equation}
    -a_n\hat{\Delta}(v-c-\epsilon G_q)=-v^{\frac{n+2}{n-2}}\leq 0,\quad\text{on }U\setminus\{q\},\label{eq_vPDE0}
\end{equation}
and we can choose $c=\sup_{\partial U} v\geq 0$ so that
\[v-c-\epsilon G_q\leq 0\quad\text{on }\partial U.\]
Moreover, by the estimates on the asymptotics of the growth of $v$ and $G_q$ near $q$, for any $\epsilon>0$ we can find a small neighborhood $V$ with $q\in V\subset U$ such that
\[v-c-\epsilon G_q\leq 0\quad\text{on }\partial V.\]
So again using the maximum principle, we have
\[v-c\leq 0\quad\text{on }U\setminus V.\]
Now taking $\epsilon\rightarrow 0$, as $V$ can be chosen as a small ball tending to the point $q$, we conclude that
\[v-c\leq 0\quad\text{on }U\setminus\{q\}.\]
This implies that $v$ achieves its maximum at some $p\in\ol{M}\setminus\{q\}$. But then by \eqref{eq_vPDE0} we have
\[0\leq (-a_n\hat{\Delta}v)(p)=-v(p)^{\frac{n+2}{n-2}}\leq 0.\]
Hence $\max_{\ol{M}} v= v(p)=0$ so that   $v=0$ and $\tilde{u}_\infty= 0$.
\end{proof}

\subsection{Convergence after a different rescaling}\label{subsec_rescaledzerolimit}

Above, we have seen that unlike in the $Y<0$ case, when $Y(M^n,[g_0])=0$ rescaling $u(x,t)$ by $t^{-\frac{n-2}{4}}$ only gives the trivial limit $\tilde{u}_\infty\equiv 0$. We now describe how to normalize $u(x,t)$ in a different way so as to identify a nontrivial limiting behavior as $t\rightarrow\infty$, proving Theorem \ref{rescaledzerolimit}. For this we need the following existence result for a certain conformal change of $g_0$ to another asymptotically flat metric with compactly supported non-positive scalar curvature as described below, which follows from a result of \cite{DiltsMaxwell}.

\begin{Lem}[{\cite[Theorem 5.1]{DiltsMaxwell}}]\label{lem_grho}
For any $K\subset M$ compact and when $\tau\in(0,n-2)$, there exists a metric $g_{\rho,0}=\rho^{\frac{4}{n-2}}g_0$ with $\rho-1\in C^{k+\alpha}_{-\tau}$ such that $R_{g_{\rho,0}}$ is compactly supported within $K$, and $\left.R_{g_{\rho,0}}\right|_K\leq 0$.
\end{Lem}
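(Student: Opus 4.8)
The plan is to reduce the statement to a solvability question for a single linear elliptic equation via the conformal transformation law for the scalar curvature. Writing $g_{\rho,0}=\rho^{\frac{4}{n-2}}g_0$, one has
\[
R_{g_{\rho,0}}=\rho^{-N}\left(-a_n\Delta_{g_0}\rho+R_{g_0}\rho\right)=\rho^{-N}L_{g_0}\rho .
\]
Thus it suffices to produce a \emph{positive} function $\rho$ with $\rho\to 1$ at infinity, $\rho-1\in C^{k+\alpha}_{-\tau}$, and $L_{g_0}\rho=-\psi$, where $\psi\ge 0$ is a smooth bump function supported in the interior of $K$, which we are still free to choose. Indeed, for such a $\rho$ we obtain $R_{g_{\rho,0}}=-\rho^{-N}\psi$, which is nonpositive everywhere and vanishes outside $\mathrm{supp}\,\psi\subset K$, exactly as required.

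Setting $\rho=1+w$ reduces the problem to solving $L_{g_0}w=-\psi-R_{g_0}=:h$, where $h\in C^{k-2+\alpha}_{-\tau-2}$ since $R_{g_0}=O(|x|^{-\tau-2})$ and $\psi$ is compactly supported. Because $g_0$ is AF, $L_{g_0}$ is asymptotic at infinity to $-a_n$ times the flat Laplacian, whose indicial roots are $0$ and $2-n$; as the weight $-\tau$ lies strictly between them when $\tau\in(0,n-2)$, the weighted elliptic (Fredholm) theory on asymptotically flat manifolds (see \cite{Bartnik86}) gives that $L_{g_0}\colon C^{k+\alpha}_{-\tau}\to C^{k-2+\alpha}_{-\tau-2}$ is Fredholm of index zero. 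The subtle point in the present case $Y=0$ is that this operator is \emph{not} injective: the positive function $u_0$ from \eqref{eq_u0}, which satisfies $L_{g_0}u_0=0$ and decays like $|x|^{2-n}$, lies in $C^{k+\alpha}_{-\tau}$ and spans the kernel; by self-adjointness of $L_{g_0}$ the cokernel is correspondingly one-dimensional, so that $h$ must satisfy the single solvability condition $\int_M \psi\,u_0\,dV_{g_0}=-\int_M R_{g_0}\,u_0\,dV_{g_0}$.

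The heart of the argument is to verify that this condition can be met, and this is where I expect the main obstacle to lie. Using $R_{g_0}u_0=a_n\Delta_{g_0}u_0$ and the divergence theorem, the right-hand side is a flux at infinity:
\[
-\int_M R_{g_0}u_0\,dV_{g_0}=-a_n\lim_{R\to\infty}\int_{\{r=R\}}\partial_r u_0\,dA=a_n(n-2)\,\omega_{n-1}\,c>0,
\]
where $c>0$ is the leading coefficient in $u_0\sim c\,|x|^{2-n}$ and $\omega_{n-1}$ is the area of the unit sphere; the positivity uses $2-n<0$ and $c>0$. Since $\psi\ge 0$ and $u_0>0$, the quantity $\int_M\psi u_0\,dV_{g_0}$ is positive and can be made equal to this fixed positive number simply by scaling $\psi$. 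With $\psi$ so chosen the solvability condition holds, and we obtain $w\in C^{k+\alpha}_{-\tau}$ solving $L_{g_0}w=h$.

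It remains to arrange positivity and record regularity. The solution $w$ is determined only up to adding multiples of $u_0$; since $w$ is bounded and tends to $0$ at infinity while $u_0>0$ with $\inf u_0>0$ on compact sets, replacing $w$ by $w+\lambda u_0$ for $\lambda$ large makes $\rho=1+w+\lambda u_0$ strictly positive everywhere (near infinity $\rho\to 1>0$, and on compacta the term $\lambda u_0$ dominates). This modification preserves both $L_{g_0}\rho=-\psi$ and the asymptotics $\rho-1\in C^{k+\alpha}_{-\tau}$, $\rho\to 1$. Finally, weighted Schauder estimates confirm $\rho-1\in C^{k+\alpha}_{-\tau}$, completing the construction, and the resulting $R_{g_{\rho,0}}=-\rho^{-N}\psi$ is supported in $K$ and nonpositive there.
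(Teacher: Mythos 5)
Your proposal is correct in substance, but it takes a genuinely different route from the paper. The paper's own proof is essentially a citation: it invokes \cite[Theorem 5.1]{DiltsMaxwell} in the weighted Sobolev category (via the embedding $C^{k+\alpha}_{-\tau}\subset W^{k,p}_{-\tau'}$, $\tau'\in(0,\tau)$) and then upgrades the regularity of $\rho-1$ back to $C^{k+\alpha}_{-\tau}$ using \cite[Lemma B.3]{ChenWang}. You instead rebuild the statement from scratch: conformal transformation law, reduction to $L_{g_0}w=-\psi-R_{g_0}$, Fredholm theory for $L_{g_0}$ on weights $-\tau\in(2-n,0)$, and the Fredholm alternative, with the single solvability condition met by scaling the bump $\psi$ against the positive flux $-\int_M R_{g_0}u_0\,dV_{g_0}=a_n(n-2)\omega_{n-1}c$. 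This has the merit of making explicit where the standing assumption $Y=0$ of Section \ref{sec_Y=0} enters (the kernel element $u_0$ of \eqref{eq_u0} and the sign of the resulting compatibility condition), whereas the cited Dilts--Maxwell theorem covers all Yamabe classes at once. Note that your argument as written is specific to $Y=0$: for $Y<0$, zero could a priori be a \emph{higher} eigenvalue of the compactified conformal Laplacian, with sign-changing kernel elements, and the constraint $\psi\geq 0$ could then obstruct solvability. That is acceptable here, since $Y=0$ is implicit throughout the section.

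Two steps need tightening. The crux is your unproved assertion that $u_0$ \emph{spans} the kernel of $L_{g_0}$ on $C^{k+\alpha}_{-\tau}$, so that the cokernel is detected by integration against $u_0$ alone; if the kernel had dimension at least two, the extra compatibility conditions would involve sign-changing kernel elements and could not obviously be satisfied with $\psi\geq 0$. One-dimensionality is true but requires an argument: any kernel element decays like $O(|x|^{2-n})$ by the weighted asymptotic expansion, hence transplants via the compactification of Lemma \ref{lem_DiltsMaxwell} to a bounded solution of $L_{\ol{g}}\tilde{v}=0$ on $\ol{M}\setminus\{q\}$; a removable-singularity argument (a point has zero $W^{1,2}$-capacity for $n\geq 3$) extends it weakly across $q$; and since $Y(\ol{M},[\ol{g}])=0$ means $0$ is the \emph{first} eigenvalue of $L_{\ol{g}}$, its eigenspace is one-dimensional, spanned by the positive function of Lemma \ref{lem_Yamabe_solution0}. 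Second, your flux computation uses the expansion $u_0=c|x|^{2-n}+o(|x|^{2-n})$ together with matching decay of $\partial_r u_0$; the paper records only the two-sided bound on $u_0/|x|^{2-n}$, so you should invoke the expansion theorem for decaying solutions of $L_{g_0}u_0=0$ (e.g. from \cite{Bartnik86}) to obtain the leading coefficient $c>0$ and the derivative control that makes the boundary integrals converge. Two minor points: $K$ must have nonempty interior for such a $\psi$ to exist, and Fredholmness should either be quoted in the weighted H\"older category or obtained by solving in $W^{k,p}_{-\tau'}$ and bootstrapping, which is precisely what the paper does.
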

\begin{proof}
By \cite[Theorem 5.1]{DiltsMaxwell} and the embedding $C^{k+\alpha}_{-\tau}\subset W^{k,p}_{-\tau'}$ for any $\tau'\in(0,\tau)$, we have that the above result holds with $C^{k+\alpha}_{-\tau}$ replaced by $W^{k,p}_{-\tau'}$. Then by applying \cite[Lemma B.3]{ChenWang}, we see that moreover $\rho-1\in C^{k+\alpha}_{-\tau}$.
\end{proof}

We can then rewrite the evolution equation satisfied by $u$ in terms of the metric $g_{\rho,0}$ as
\begin{equation}
\frac{\partial}{\partial t} u_\rho^{\frac{n+2}{n-2}}=\frac{n+2}{4}(a_n\Delta_{g_{\rho,0}} u_\rho-R_{g_{\rho,0}}u_\rho),\label{mod_flow_eq}
\end{equation}
with $u_\rho(x,0)=\rho(x)^{-1}$. We will then have that
\begin{equation}
u_\rho(x,t)=\rho(x)^{-1}u(x,t).\label{eq_urho}
\end{equation}

Let $v=v(x,t)$ be the solution of \eqref{mod_flow_eq} satisfying the initial condition $v(x,0)\equiv 1$ with $v-1\in C^{k+\alpha}_{-\tau}$, and for $c>0$, set
\[v_c(x,t)=c v\left(x,c^{-\frac{4}{n-2}} t\right).\]
Then $v_c$ also solves \eqref{mod_flow_eq}.

We then have the following inequalities which follow directly from the comparison principle of Lemma \ref{max_principle_lem}.

\begin{Lem}\label{lem_uv_bound}
Let $u$ and $v$ be as above, and set
\begin{align*}
b&=\min_{x\in M}u_\rho(x,0)=\min_{x\in M}\rho^{-1}(x)\leq 1,
\\
B&=\max_{x\in M}u_\rho(x,0)=\max_{x\in M}\rho^{-1}(x)\geq 1.
\end{align*}
Then for all $t\geq 0$ and $x\in M$ we have
\[v_b(x,t)\leq u(x,t)\leq v_B(x,t).\]
\end{Lem}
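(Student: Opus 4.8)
The plan is to apply the comparison principle of Lemma \ref{max_principle_lem} to the three functions $v_b$, $u_\rho=\rho^{-1}u$, and $v_B$, all of which solve the \emph{common} parabolic equation \eqref{mod_flow_eq}. Indeed, $v_b$ and $v_B$ are rescalings of $v$ and are noted above to solve \eqref{mod_flow_eq}, while $u_\rho$ solves \eqref{mod_flow_eq} by \eqref{eq_urho}; since \eqref{mod_flow_eq} is nothing but \eqref{flow_equation} with $g_0$ replaced by $g_{\rho,0}$, Lemma \ref{max_principle_lem} applies to it verbatim. I would establish the two comparisons $v_b\le u_\rho$ and $u_\rho\le v_B$ separately, and then recall $u=\rho u_\rho$ with $\rho$ bounded and tending to $1$ at infinity to obtain the claimed two-sided control of $u$. (Note that the natural quantity to bracket is $u_\rho$ rather than $u$ itself, since it is $u_\rho$, not $u$, that shares the governing equation \eqref{mod_flow_eq} with $v_b,v_B$.)

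For the lower bound I would take $(u_1,u_2)=(v_b,u_\rho)$ in Lemma \ref{max_principle_lem}, and for the upper bound $(u_1,u_2)=(u_\rho,v_B)$. Hypothesis (1), the uniform two-sided positive bounds on $M\times[0,T]$, holds for each function: $u$ is a positive solution bounded on finite time intervals and $\rho$ is bounded away from $0$ and $\infty$, so the same is true of $u_\rho$, while $v_b(x,t)=b\,v(x,b^{-\frac{4}{n-2}}t)$ and $v_B(x,t)=B\,v(x,B^{-\frac{4}{n-2}}t)$ are positive rescalings of $v$ evaluated over compact time intervals. Hypothesis (3), the initial ordering, is immediate from $v_b(\cdot,0)\equiv b=\min_M\rho^{-1}\le \rho^{-1}=u_\rho(\cdot,0)\le \max_M\rho^{-1}=B\equiv v_B(\cdot,0)$. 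For hypothesis (2) I would use that the modified flow also preserves asymptotic flatness (as does \eqref{flow_equation} by \cite{CZ}), so that $v\to 1$, $u\to 1$, and $\rho\to 1$ uniformly in space; consequently $v_b\to b$, $u_\rho\to 1$, and $v_B\to B$ uniformly, with limits constant in $t$.

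The one genuine subtlety is that hypothesis (2) of Lemma \ref{max_principle_lem} demands the spatial limits at infinity be \emph{strictly} ordered, whereas a priori we only know $b\le 1\le B$. When $b<1$ and $1<B$ the two comparisons apply directly. To cover the borderline cases $b=1$ or $B=1$, I would first observe that $c\mapsto v_c$ is monotone nondecreasing: for $c'<c$ the functions $v_{c'},v_c$ solve \eqref{mod_flow_eq} with strictly ordered initial data and strictly ordered constant spatial limits $c'<c$, so Lemma \ref{max_principle_lem} gives $v_{c'}\le v_c$. Then for the lower bound I apply the comparison to $v_{b-\epsilon}\le u_\rho$, which is valid since $b-\epsilon<1$ strictly and $v_{b-\epsilon}(\cdot,0)=b-\epsilon\le \min_M\rho^{-1}=u_\rho(\cdot,0)$, and let $\epsilon\to 0^+$, using $v_{b-\epsilon}\to v_b$ (monotone limit together with continuous dependence of the solution on the rescaling parameter); symmetrically $u_\rho\le v_{B+\epsilon}$ followed by $\epsilon\to 0^+$ gives the upper bound. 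I expect this limiting argument around the strict-inequality hypothesis, together with confirming that asymptotic flatness (hence uniform-in-space convergence of the relevant limits) is preserved by the modified flow, to be the only points requiring care; the remainder is a direct verification of the hypotheses of Lemma \ref{max_principle_lem}.
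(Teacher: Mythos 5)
Your proposal follows the same route as the paper, which offers no separate proof of Lemma \ref{lem_uv_bound} beyond asserting that it ``follows directly from the comparison principle of Lemma \ref{max_principle_lem}.'' In fact your write-up is more careful than the paper on the two points that genuinely need care: (i) you run the comparison in the $g_{\rho,0}$ gauge, where $v_b$, $u_\rho$ and $v_B$ all solve the \emph{same} equation \eqref{mod_flow_eq} (and Lemma \ref{max_principle_lem} does apply verbatim there, since $g_{\rho,0}$ is again asymptotically flat); and (ii) you notice that hypothesis \eqref{max_cond2} demands \emph{strictly} ordered spatial limits, which fails in the borderline cases $b=1$ or $B=1$, and you repair this by comparing with $v_{b-\epsilon}$, $v_{B+\epsilon}$ and letting $\epsilon\to 0^+$ --- the same kind of device the paper itself uses in the proof of Lemma \ref{lower_bd_lem}. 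All of this is correct, and it yields $v_b\le u_\rho\le v_B$.

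The one step that does not work is the last one: passing from $v_b\le u_\rho\le v_B$ to the displayed inequality $v_b\le u\le v_B$ via $u=\rho u_\rho$. Since $b=1/\max_M\rho$ and $B=1/\min_M\rho$, multiplying the bracket by $\rho$ gives only
\[
\tfrac{1}{B}\,v_b(x,t)\;\le\;u(x,t)\;\le\;\tfrac{1}{b}\,v_B(x,t),
\]
which is strictly weaker than the claim: from $u=\rho u_\rho\le\rho v_B$ you can conclude $u\le v_B$ only where $\rho\le 1$, and $v_b\le u$ only where $\rho\ge 1$, so this route works only if $\rho\equiv 1$; the fact that $\rho\to 1$ at infinity is of no help at interior points. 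Note, however, that this defect traces back to the statement itself rather than to your argument: $u$ and $v_B$ solve different equations (\eqref{flow_equation} versus \eqref{mod_flow_eq}), so no direct comparison between them is available, and what the paper actually uses afterwards (for instance in Proposition \ref{prop_harnack}, where it writes $\sup u_\rho\le\sup v_B$ and $\inf v_b\le\inf u_\rho$) is precisely the $u_\rho$ version you proved. The clean fix is to drop the final conversion and read the lemma as the statement $v_b\le u_\rho\le v_B$ (equivalently $\rho v_b\le u\le\rho v_B$), which your proof establishes completely, recording separately the weaker two-sided bound on $u$ with the constants $1/B$ and $1/b$ if it is needed.
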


In order to study the convergence of $v$ (and subsequently $u$) under appropriate rescalings, we first need control of the scalar curvatures of the associated metrics. Let
\[g_\rho(x,t)=v(x,t)^{\frac{4}{n-2}}g_{\rho,0}\]
be the family of metrics which make up the Yamabe flow starting from $g_{\rho,0}$.

\begin{Lem}\label{lem_R_bounds}
The scalar curvature of the metric $g_\rho$ satisfies
\[-\frac{1}{t}\leq R_{g_\rho}\leq 0.\]
\end{Lem}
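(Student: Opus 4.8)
The lower bound $R_{g_\rho}\geq -\tfrac{1}{t}$ is immediate and requires no new work. Since $\rho-1\in C^{k+\alpha}_{-\tau}$ with $k\geq 3$, the metric $g_{\rho,0}$ is itself a $C^{2+\alpha}_{-\tau}$ AF manifold, so the general estimate of Lemma \ref{lem_R_lower} applies verbatim to the Yamabe flow $g_\rho(t)$ starting from $g_{\rho,0}$, giving $R_{g_\rho}\geq -\tfrac{1}{t}$.

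For the upper bound $R_{g_\rho}\leq 0$, the plan is to run a maximum principle argument parallel to the proof of Lemma \ref{lem_R_lower}, now exploiting that the \emph{initial} scalar curvature is nonpositive. By Lemma \ref{lem_grho} we have $R_{g_{\rho,0}}\leq 0$ everywhere, since it is supported in $K$ and nonpositive there. Recalling the evolution equation $\frac{\partial}{\partial t}R=(n-1)\Delta_{g_t}R+R^2$, I compare $R$ against the stationary supersolution $\phi\equiv 0$. Writing $w=R-\phi=R$, one computes
\[
\frac{\partial}{\partial t}w=(n-1)\Delta_{g_t}w+(R+\phi)\,w=(n-1)\Delta_{g_t}w+R\,w,
\]
a linear parabolic equation for $w$ with reaction coefficient $R$ and with $w(\cdot,0)=R_{g_{\rho,0}}\leq 0$. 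This has exactly the structure treated in Lemma \ref{lem_R_lower} (there the coefficient was $\phi+R$), so applying the Ecker--Huisken maximum principle \cite[Theorem 4.3]{EckerHuisken} on $M\times[0,T]$ yields $w=R_{g_\rho}\leq 0$ for every $T>0$. Intuitively, at a spatial maximum $\Delta_{g_t}R\leq 0$ gives $\frac{d}{dt}\max_x R\leq(\max_x R)^2$, and comparison with the ODE $y'=y^2$, $y(0)\leq 0$, whose solution $y(t)=y(0)/(1-y(0)t)$ remains nonpositive for $t\geq 0$, shows that the sign-indefinite reaction term $+R^2$ nonetheless cannot push a nonpositive maximum above zero.

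The main obstacle is justifying this maximum principle in the noncompact setting, i.e.\ verifying the hypotheses of \cite[Theorem 4.3]{EckerHuisken}. Two ingredients are needed. First, the reaction coefficient $R$ must be bounded on $M\times[0,T]$ for each fixed $T$; this is not circular with the conclusion, since it follows merely from the flow $g_\rho(t)$ remaining uniformly equivalent to $g_{\rho,0}$ on finite time intervals (as in condition (1) of Lemma \ref{max_principle_lem}) together with the preservation of asymptotic flatness, and does not use the sign statement we are proving. Second, one needs $R_{g_\rho}(\cdot,t)$ to decay at spatial infinity, which holds because $g_{\rho,0}$ and all $g_\rho(t)$ are asymptotically flat, so the relevant weighted quantities satisfy the growth control required by the theorem. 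With these two facts in place the comparison goes through exactly as in Lemma \ref{lem_R_lower}, completing the proof of both inequalities.
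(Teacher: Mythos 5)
Your proposal is correct and takes essentially the same route as the paper: the lower bound is obtained by applying Lemma \ref{lem_R_lower} to the Yamabe flow starting from the AF metric $g_{\rho,0}$, and the upper bound by applying the Ecker--Huisken maximum principle \cite[Theorem 4.3]{EckerHuisken} to $\frac{\partial}{\partial t}R=(n-1)\Delta_{g_t}R+R^2$ on $M\times[0,T]$, using that $R_{g_{\rho,0}}\leq 0$. Your rewriting of the quadratic term as a linear reaction term with coefficient $R$, and your verification of the maximum-principle hypotheses, merely make explicit the details the paper leaves implicit.
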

\begin{proof}
Proposition \ref{lem_R_lower} gives us the lower bound. For the upper bound, we again apply the Ecker--Huisken maximum principle \cite[Theorem 4.3]{EckerHuisken}, this time directly to $\frac{\partial}{\partial t} R=(n-1)\Delta R+R^2$ on $M\times[0,T]$, where $T>0$ is arbitrary. Since $R_{g_\rho}(0)\leq 0$ by assumption, we conclude that $R_{g_\rho}\leq 0$ for all times.
\end{proof}

%The first part of the above estimate also gives us an upper bound on the growth rate of $v$, or more generally, for any solution of the Yamabe flow on an asymptotically flat manifold.
%
%\begin{Lem}\label{lem_upper_bound}
%Let $u(x,t)$ be a solution of \eqref{flow_equation} corresponding to the Yamabe flow of any asymptotically flat manifold. Then $t^{-\frac{n-2}{4}} u(x,t)$ is nonincreasing, and
%\[\max_{x\in M} u(x,t)=O(t^{\frac{n-2}{4}}).\]
%\end{Lem}
%\begin{proof}
%Recall from the proof of Lemma \ref{lem_R_bounds} that we have $R\geq-\frac{1}{t}$. The result then follows by using this bound when integrating the equation
%\[\frac{\partial}{\partial t} u=-\frac{n-2}{4} R u.\]
%\end{proof}
%
%In fact we expect (and will verify later on) that $v(x,t)$ should not be bounded in $t$, so we will need to rescale it by its maximum values for each $t$. 
Although $v$ is not bounded in time, since it corresponds to a Yamabe flow starting from an asymptotically flat manifold with $Y(M^n,[g_0])=0$, we have that its maximum values remain in the compact set $K$, which contains the support of $R_{\rho_0}$.

\begin{Lem}\label{lem_max_compact}
Let $B(t)=\max_{x\in M}v(x,t)$, and let $K\subset M$ be a compact set. Then
\[B(t)=\max_{x\in K}v(x,t).\]
\end{Lem}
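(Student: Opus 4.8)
The plan is to run a maximum principle argument on the exterior region $M\setminus K$, exploiting the fact that $R_{g_{\rho,0}}$ is supported inside $K$, so that outside $K$ the evolution equation \eqref{mod_flow_eq} loses its zeroth-order term and becomes a pure (degenerate-coefficient) heat-type equation for $v$.

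First I would record two sign/monotonicity facts. From the Yamabe flow we have $\frac{\partial}{\partial t}v=-\frac{n-2}{4}R_{g_\rho}v$, and Lemma \ref{lem_R_bounds} gives $R_{g_\rho}\leq 0$, so $\frac{\partial}{\partial t}v\geq 0$; thus $v$ is nondecreasing in $t$, and since $v(\cdot,0)\equiv 1$ this yields $v\geq 1$ everywhere. Monotonicity in $t$ also gives, for each fixed $t_0>0$, that $\max_{K\times[0,t_0]}v=\max_{x\in K}v(x,t_0)$, and similarly reduces any time-uniform control on a spatial slice $\{r=R\}\times[0,t_0]$ to control at the single time $t_0$.

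Next, on $M\setminus K$ equation \eqref{mod_flow_eq} reads $\frac{\partial}{\partial t}v^{N}=\frac{n+2}{4}a_n\Delta_{g_{\rho,0}}v$, so after dividing by $Nv^{N-1}>0$ we obtain a uniformly parabolic equation with \emph{no} zeroth-order term (uniform parabolicity holding on any finite time interval, on which $v$ is bounded above and below). I would then fix $t_0>0$ and apply the parabolic maximum principle on the truncated exterior domains $\Omega_R=(M\setminus K)\cap\{r<R\}$ over $[0,t_0]$. The parabolic boundary splits into three pieces: the initial slice, where $v=1$; the inner boundary $\partial K\times[0,t_0]$, where monotonicity gives $v\leq\max_{x\in K}v(x,t_0)$; and the outer boundary $\{r=R\}\times[0,t_0]$, where monotonicity together with the preserved asymptotic flatness of $g_\rho(t_0)$ gives $v\leq 1+\eta(R)$ with $\eta(R)\to 0$ as $R\to\infty$. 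Hence $v(x,t_0)\leq\max\{\max_{x\in K}v(x,t_0),\,1+\eta(R)\}$ for every $x\in\Omega_R$; letting $R\to\infty$ and using $\max_{x\in K}v(x,t_0)\geq 1$ yields $v(x,t_0)\leq\max_{x\in K}v(x,t_0)$ throughout $M\setminus K$, which is the desired identity $B(t_0)=\max_{x\in K}v(x,t_0)$.

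The main obstacle is the noncompactness of $M\setminus K$: the maximum could a priori escape to spatial infinity, and the bare maximum principle does not apply. This is precisely where the two inputs above are essential---the monotonicity in $t$ collapses the time-uniform estimate on $\{r=R\}$ to asymptotic flatness at the single time $t_0$, and the vanishing of the zeroth-order term outside $K$ is what makes the comparison of boundary values clean, so that the exhaustion $R\to\infty$ closes the argument.
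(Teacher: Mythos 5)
Your proof is correct, but it takes a genuinely different (and longer) route than the paper's. The paper argues elliptically rather than parabolically: freezing the time $t$, it reads the flow as the elliptic identity $-a_n\Delta_{g_{\rho,0}}v+R_{g_{\rho,0}}v=R_{g_\rho}v^{\frac{n+2}{n-2}}$, notes that on $M\setminus K$ the zeroth-order term vanishes (compact support of $R_{g_{\rho,0}}$ in $K$) while the right-hand side is $\leq 0$ (Lemma \ref{lem_R_bounds}), and concludes that $v(\cdot,t)-1$ is subharmonic with respect to the fixed metric $g_{\rho,0}$ on $M\setminus K$ and tends to $0$ at spatial infinity; the exterior maximum principle then gives $\sup_{M\setminus K}\bigl(v(\cdot,t)-1\bigr)\leq\max_{\partial K}\bigl(v(\cdot,t)-1\bigr)$ in one stroke, at each fixed $t$. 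So both arguments turn on exactly the same two inputs---the support condition on $R_{g_{\rho,0}}$ and the sign $R_{g_\rho}\leq 0$---but the paper needs no space--time cylinders, no monotonicity in $t$, and no quasilinear-to-linear reinterpretation of \eqref{mod_flow_eq}: all of your bookkeeping (reducing the lateral data on $\partial K\times[0,t_0]$ and $\{r=R\}\times[0,t_0]$ to the final slice via $\partial_t v\geq 0$, then exhausting $R\to\infty$) collapses into a single elliptic maximum principle at fixed time. What your route buys in exchange: the monotonicity observation gives $v\geq 1$ for free, which is in fact needed (and left implicit in the paper) to upgrade $\sup_{M\setminus K}(v-1)\leq\max\bigl\{\max_{\partial K}(v-1),\,0\bigr\}$ to the stated conclusion $B(t)=\max_{x\in K}v(x,t)$; and your explicit exhaustion over $\Omega_R$ spells out how the noncompactness of $M\setminus K$ is handled, which the paper compresses into the phrase ``tends to zero at spatial infinity.'' Both are valid proofs of the lemma; yours is simply the heavier machinery for the same two facts.
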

\begin{proof}
Since $R_{\rho,0}$ is supported within $K$ for all $t\geq 0$ and $R_{g_\rho}\leq 0$ by Lemma \ref{lem_R_bounds}, the function $v(x,t)-1$ is subharmonic on $M\setminus K$ and tends to zero at spatial infinity. So the maximum principle yields that
\[\sup_{x\in M\setminus K}v(x,t)-1\leq\max_{x\in\partial K}v(x,t)-1,\]
which leads to the desired conclusion.
\end{proof}

Next, with the help of the functions $v_b$ and $v_B$, we establish a Harnack inequality for $u_\rho$.

\begin{Prop}\label{prop_harnack}
For $p\in M$ and  $R>0$, there exists $C$ such that
\[\sup_{x\in B(p,R)} u_\rho(x,t)\leq C \inf_{x\in B(p,R)} u_\rho(x,t),\]
for any $t\geq 0$.
\end{Prop}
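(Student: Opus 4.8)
The plan is to reduce the desired Harnack inequality for $u_\rho$ to a single-time elliptic Harnack inequality for the auxiliary flow $v$, using the comparison $v_b\le u\le v_B$ of Lemma \ref{lem_uv_bound}, the time-monotonicity of $v$, and the relation $u_\rho=\rho^{-1}u$ from \eqref{eq_urho}. Since $\ol{B(p,R)}$ is precompact and $\rho$ is a fixed positive function, controlling $\sup u_\rho/\inf u_\rho$ on $B(p,R)$ reduces, up to the finite factor $\sup_{B(p,R)}\rho/\inf_{B(p,R)}\rho$, to controlling $\sup u/\inf u$ on the same ball.

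First I would record that $v$ is monotonically nondecreasing in time: since $\partial_t v=-\tfrac{n-2}{4}R_{g_\rho}v$ and $R_{g_\rho}\le 0$ by Lemma \ref{lem_R_bounds}, we have $\partial_t v\ge 0$. Writing $s_-=B^{-4/(n-2)}t$ and $s_+=b^{-4/(n-2)}t$, so that $s_-\le t\le s_+$, the comparison of Lemma \ref{lem_uv_bound} gives $\sup_{B(p,R)}u(\cdot,t)\le B\sup_{B(p,R)}v(\cdot,s_-)$ and $\inf_{B(p,R)}u(\cdot,t)\ge b\inf_{B(p,R)}v(\cdot,s_+)$. By monotonicity $\inf_{B(p,R)}v(\cdot,s_+)\ge\inf_{B(p,R)}v(\cdot,s_-)$, so once we have a single-time Harnack $\sup_{B(p,R)}v(\cdot,s)\le C\inf_{B(p,R)}v(\cdot,s)$ with $C$ independent of $s$, chaining these inequalities yields $\sup_{B(p,R)}u\le(BC/b)\inf_{B(p,R)}u$, and the statement for $u_\rho$ follows from the $\rho$-bounds above.

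The core step is the uniform single-time Harnack for $v(\cdot,s)$. At fixed $s$, the conformal change $g_\rho(\cdot,s)=v^{4/(n-2)}g_{\rho,0}$ shows that $v=v(\cdot,s)$ solves the elliptic equation $-a_n\Delta_{g_{\rho,0}}v+R_{g_{\rho,0}}v=R_{g_\rho}v^N$, where by Lemma \ref{lem_R_bounds} the coefficient satisfies $|R_{g_\rho}|\le 1/s$. Working on $B(p,2R)$, I set $A=\max_{\ol{B(p,2R)}}v(\cdot,s)$ and normalize $\bar v=v/A\le 1$, so that $\bar v$ is a positive solution of $-a_n\Delta_{g_{\rho,0}}\bar v+c\,\bar v=0$ with $c=R_{g_{\rho,0}}-R_{g_\rho}A^{N-1}\bar v^{N-1}$. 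The key point is that $|c|\le\|R_{g_{\rho,0}}\|_\infty+A^{N-1}/s$, and since $v$ is a Yamabe flow starting from the AF manifold $g_{\rho,0}$, Proposition \ref{upper_bd_lem} gives $A\le\max_M v(\cdot,s)=O(s^{(n-2)/4})$; because $(N-1)\tfrac{n-2}{4}=1$ this yields $A^{N-1}/s\le C$ uniformly in $s$ for $s\ge s_0>0$. Hence $c$ is bounded in $L^\infty(B(p,2R))$ uniformly in $s$, the background metric $g_{\rho,0}$ is fixed, smooth, and uniformly elliptic on the precompact ball, and Moser's elliptic Harnack inequality applies to $\bar v$ with a constant depending only on $n$, $R$, the geometry of $g_{\rho,0}$, and the bound on $c$. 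Multiplying back by $A$ gives the claimed $s$-independent Harnack for $v$. The remaining small times $s\in(0,s_0]$ (and correspondingly small $t$) are handled separately: there $v$ is a smooth positive solution bounded above and below on $\ol{B(p,2R)}\times[0,s_0]$, so the Harnack holds trivially.

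The main obstacle is this core step, and specifically the verification that the normalized zeroth-order coefficient $c$ is bounded uniformly in $s$: this is exactly where the sharp blow-up rate $\max_M v=O(s^{(n-2)/4})$ of Proposition \ref{upper_bd_lem} must be balanced against the scalar curvature decay $|R_{g_\rho}|\le 1/s$ of Lemma \ref{lem_R_bounds}, the two matching precisely through the exponent identity $(N-1)\tfrac{n-2}{4}=1$. It is also essential that we pass to $v$ rather than work with $u$ (or $u_\rho$) directly: the flow $v$ starts from data with $R_{g_{\rho,0}}\le 0$ and hence enjoys the two-sided bound $-1/s\le R_{g_\rho}\le 0$, whereas the original flow provides only the one-sided estimate $R_{g(t)}\ge-1/t$ of Lemma \ref{lem_R_lower}, which would not control the sign-indefinite zeroth-order term.
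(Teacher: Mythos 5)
Your proposal is correct and takes essentially the same approach as the paper: reduce to a time-uniform elliptic Harnack inequality for $v$ using Lemma \ref{lem_uv_bound} together with the monotonicity coming from $R_{g_\rho}\le 0$, then note that the zeroth-order coefficient $R_{g_{\rho,0}}-R_{g_\rho}v^{\frac{4}{n-2}}$ in the fixed-time elliptic equation for $v$ is uniformly bounded by balancing Lemma \ref{lem_R_bounds} against Proposition \ref{upper_bd_lem}, so the elliptic Harnack inequality of Gilbarg--Trudinger applies. Your extra normalization $\bar v=v/A$ and the explicit treatment of small times are harmless additions; the paper's reduction is the same chain, written as $\sup u_\rho\le\sup v_B\le\tfrac{B}{b}\sup v_b\le C\tfrac{B}{b}\inf v_b\le C\tfrac{B}{b}\inf u_\rho$.
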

\begin{proof}
It suffices to prove a Harnack inequality for $v$. Indeed, a Harnack inequality for $v$ implies a Harnack inequality for both $v_b$ and $v_B$. Moreover, since
\[\frac{\partial}{\partial t} v=-\frac{n-2}{4} R_{g_\rho} v,\]
with $-\frac{1}{t}\leq R_{g_\rho}\leq 0$ by Lemma \ref{lem_R_bounds}, we have $v_B\leq\frac{B}{b}v_b$. Putting things together, we then see that (omitting the $x\in B(p,R)$ subscript) we would have
\[\sup u_\rho \leq \sup v_B\leq \frac{B}{b}\sup v_b\leq C\frac{B}{b}\inf v_b\leq C\frac{B}{b}\inf u_\rho,\]
as desired. So we will now prove that $v$ satisfies a Harnack inequality.

 Recall that $v$ satisfies
\begin{equation}
-a_n\Delta_{g_{\rho,0}}v+R_{g_{\rho,0}}v=R_{g_\rho} v^{\frac{4}{n-2}} v.\label{eq_v_elliptic}
\end{equation}
By Lemma \ref{lem_R_bounds} and Proposition \ref{upper_bd_lem}, we have that $\left|R_{g_\rho} v^{\frac{4}{n-2}}\right|$ is bounded by a uniform constant, while \eqref{eq_v_elliptic} is an elliptic equation, with the operator $\Delta_{g_{\rho,0}}$ defined using the fixed metric $g_{\rho,0}$. Therefore the Harnack inequality for $v$ follows (for instance, by \cite[Theorem 8.20]{GilbargTrudinger}).
\end{proof}
\begin{Cor}\label{cor_umaxbound}
We have that $\max_{x\in K}v(x,t)=o(t^{\frac{n-2}{4}})$, which in turn implies $\max_{x\in K}u(x,t)=o(t^{\frac{n-2}{4}})$. More generally, we have that $\max_{x\in M}v(x,t)=o(t^{\frac{n-2}{4}})$ and $\max_{x\in M}u(x,t)=o(t^{\frac{n-2}{4}})$.
\end{Cor}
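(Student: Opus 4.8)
The plan is to upgrade the pointwise statement $\tilde{u}_\infty\equiv 0$ from Theorem \ref{rescaledzero} to the uniform bound $\max_K v=o(t^{\frac{n-2}{4}})$ by means of the time-uniform Harnack inequality of Proposition \ref{prop_harnack}, and then to transfer this conclusion to $u$ and to the global maxima using the comparison bounds already assembled.

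First I would record that the pointwise limit controls a single point at a single (rescaled) time. Fix $p_0\in K$. By Theorem \ref{rescaledzero} we have $t^{-\frac{n-2}{4}}u(p_0,t)\to 0$. Combining this with the lower comparison $v_b\le u$ from Lemma \ref{lem_uv_bound} together with the scaling $v_b(x,t)=b\,v(x,b^{-\frac{4}{n-2}}t)$, a one-line reparametrization $s=b^{-\frac{4}{n-2}}t$ yields $s^{-\frac{n-2}{4}}v(p_0,s)\to 0$ as $s\to\infty$. (Alternatively, since $v$ is itself a Yamabe flow starting from the AF metric $g_{\rho,0}$, Proposition \ref{upper_bd_lem} shows $s^{-\frac{n-2}{4}}v$ is nonincreasing, so its pointwise limit exists and equals $0$ by the same comparison.)

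Next I would promote this to a uniform bound on $K$ via Harnack. Choose $R>0$ with $K\subset B(p_0,R)$. The proof of Proposition \ref{prop_harnack} establishes a Harnack inequality for $v$ itself, with a constant $C=C(p_0,R)$ that is independent of $t$; hence
\[\max_{x\in K}v(x,s)\le \sup_{B(p_0,R)}v(\cdot,s)\le C\inf_{B(p_0,R)}v(\cdot,s)\le C\,v(p_0,s).\]
Since $v(p_0,s)=o(s^{\frac{n-2}{4}})$, this gives $\max_K v(\cdot,s)=o(s^{\frac{n-2}{4}})$. (One could instead invoke Dini's theorem, using that $s^{-\frac{n-2}{4}}v$ is continuous, monotonically decreasing in $s$, and converges pointwise to the continuous limit $0$ on the compact set $K$.) The bound for $u$ on $K$ then follows immediately from the upper comparison $u\le v_B$ and the scaling of $v_B$, namely $\max_K u(\cdot,t)\le \max_K v_B(\cdot,t)=B\max_K v(\cdot,B^{-\frac{4}{n-2}}t)=o(t^{\frac{n-2}{4}})$.

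Finally, the global statements come from Lemma \ref{lem_max_compact}. For the distinguished compact set $K$ containing the support of $R_{g_{\rho,0}}$ we have $\max_M v(\cdot,t)=\max_K v(\cdot,t)$, which we have just shown is $o(t^{\frac{n-2}{4}})$; and then $\max_M u(\cdot,t)\le \max_M v_B(\cdot,t)=B\max_M v(\cdot,B^{-\frac{4}{n-2}}t)=o(t^{\frac{n-2}{4}})$ as before. The only genuinely delicate point is the passage from pointwise to uniform smallness: everything hinges on the Harnack constant in Proposition \ref{prop_harnack} being independent of $t$, which is precisely what allows a single-point decay estimate to control the supremum over all of $K$ simultaneously.
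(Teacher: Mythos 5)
Your proof is correct and follows essentially the same route as the paper: a single-point decay estimate combined with the time-uniform Harnack inequality of Proposition \ref{prop_harnack} upgrades to the uniform bound on $K$, after which Lemmas \ref{lem_max_compact} and \ref{lem_uv_bound} give the global statements for $v$ and $u$. The only cosmetic difference is that you obtain the pointwise decay of $v$ from Theorem \ref{rescaledzero} for $u$ via the comparison $v_b\le u$ and rescaling, whereas the paper invokes the rescaled-limit theorem directly for the flow $v$; your parenthetical Dini's theorem observation is also a valid alternative to the Harnack step.
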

\begin{proof}
By Proposition \ref{prop_harnack}, there exists some fixed $C>0$ such that for all times $t>0$ there holds
\begin{equation}
\sup_{x\in K} v(x,t)\leq C\inf_{x\in K}v(x,t).\label{eq_kharnack}
\end{equation}
Then for any $\epsilon>0$, by Theorem \ref{rescalednega} we can pick $x_0\in K$ and find $T>0$ large enough so that whenever $t\geq T$ we have
\[v(x_0,t)\leq\frac{\epsilon}{C}t^{\frac{n-2}{4}}.\]
So by \eqref{eq_kharnack} we conclude that when $t\geq T$ we also have $\sup_{x\in K}v(x,t)\leq\epsilon t^{\frac{n-2}{4}}$. Lemmas \ref{lem_max_compact} and \ref{lem_uv_bound} imply the decay of $\sup_{x\in M}v(x,t)$ and $\sup_{x\in M}u(x,t)$.
\end{proof}

Using the above facts, we can now show that we have a strictly positive subsequential limit of $\frac{u(x,t)}{\max_{x\in K}u(x,t)}$.

\begin{Prop}\label{prop_urho_holder}
The functions $\frac{u(x,t)}{\max_{x\in K}u(x,t)}$ subconverge in $C^{k,\alpha'}_{loc}$ for any $\alpha'<\alpha$ to a positive function $w(x)>0$ satisfying
\begin{equation}
-a_n\Delta_{g_{0}}w+R_{g_{0}} w\geq 0.\label{eq_w}
\end{equation}
% \begin{equation}
% -a_n\Delta_{g_{\rho,0}}w_{\rho}+R_{g_{\rho,0}} w_{\rho}\geq 0.\label{eq_w}
% \end{equation}
\end{Prop}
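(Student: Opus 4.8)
The plan is to deduce the uniform local estimates from the two-sided Harnack control already available, and then to obtain regularity by a parabolic rescaling of the flow in the spirit of the proof of Proposition \ref{prop_utildeconverge}. Throughout write $m(t)=\max_{x\in K}u(x,t)>0$. First I would upgrade Proposition \ref{prop_harnack} to a two-sided bound for the normalized functions $u/m(t)$. Since $u=\rho u_\rho$ by \eqref{eq_urho} with $\rho$ a fixed function bounded between positive constants, Proposition \ref{prop_harnack} yields a Harnack inequality for $u$ on balls, and chaining finitely many overlapping balls covering a connected compact set $\Omega\supset K\cup K'$ gives $\sup_{\Omega}u(\cdot,t)\le C_\Omega\inf_{\Omega}u(\cdot,t)$ for every $t\ge 0$. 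As $\inf_\Omega u(\cdot,t)\le m(t)\le\sup_\Omega u(\cdot,t)$, this produces, for each compact $K'\subset M$, a constant $C(K')$ with
\[ C(K')^{-1}\le\frac{u(x,t)}{m(t)}\le C(K'),\qquad x\in K',\ t\ge 0. \]

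The main obstacle is to turn these $L^\infty_{loc}$ bounds into uniform $C^{k,\alpha'}_{loc}$ bounds, the difficulty being that normalizing by the time-dependent factor $m(t)$ does not leave $u/m(t)$ solving a single nondegenerate parabolic equation. I would get around this by rescaling about each time. Fixing $t_j\to\infty$ and setting $\mu_j=m(t_j)$, define $U_j(x,\sigma)=\mu_j^{-1}u_\rho\big(x,\,t_j+\mu_j^{4/(n-2)}\sigma\big)$; the scaling symmetry $v_c(x,t)=cv(x,c^{-4/(n-2)}t)$ of \eqref{mod_flow_eq} shows $U_j$ again solves \eqref{mod_flow_eq}, whose coefficients (the fixed metric $g_{\rho,0}$ and the fixed function $R_{g_{\rho,0}}$) do not depend on $j$. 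By Corollary \ref{cor_umaxbound} one has $\mu_j^{4/(n-2)}=o(t_j)$, so the window $\sigma\in[-1,1]$ corresponds to original times $t_j(1+o(1))$; combined with the two-sided bound above, the monotonicity of $M_v(t)=\max_K v(\cdot,t)$ (nondecreasing since $R_{g_\rho}\le 0$ by Lemma \ref{lem_R_bounds}), and the sandwich $v_b\le u\le v_B$ of Lemma \ref{lem_uv_bound}, this shows $U_j$ is bounded above and below by positive constants on $K'\times[-1,1]$, uniformly in $j$. Writing \eqref{mod_flow_eq} as $\partial_\sigma U_j=\tfrac{n+2}{4N}\,U_j^{1-N}\big(a_n\Delta_{g_{\rho,0}}U_j-R_{g_{\rho,0}}U_j\big)$, the two-sided bound makes $U_j^{1-N}$ uniformly elliptic, so the equation is uniformly parabolic and the Krylov--Safonov and parabolic Schauder estimates used in Proposition \ref{prop_utildeconverge} give uniform $C^{k,\alpha'}$ bounds for $U_j(\cdot,0)$. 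Arzel\`a--Ascoli and a diagonal argument over an exhaustion then extract a subsequence along which $U_j(\cdot,0)=\mu_j^{-1}u_\rho(\cdot,t_j)$ converges in $C^{k,\alpha'}_{loc}$; multiplying by the fixed $\rho$ gives subconvergence of $u(\cdot,t_j)/m(t_j)$ to some $w$, which is positive by the uniform lower bound. This rescaling — choosing $\mu_j$ and the time window so that the $U_j$ are uniformly nondegenerate with fixed coefficients — is the crux of the argument.

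Finally I would read off the differential inequality \eqref{eq_w} by passing to the limit in the elliptic identity satisfied at each fixed time. Using the conformal law $R_{g(t)}=u^{-N}L_{g_0}u$ we have $L_{g_0}\big(\tfrac{u}{m(t)}\big)=R_{g(t)}\,u^{4/(n-2)}\tfrac{u}{m(t)}$, and on $K'$ the bound $R_{g(t)}\ge-1/t$ from Lemma \ref{lem_R_lower} together with $\max_{K'}u=o(t^{(n-2)/4})$ from Corollary \ref{cor_umaxbound} gives $R_{g(t)}\,u^{4/(n-2)}\ge-o(1)$. Hence $L_{g_0}\big(u/m(t_j)\big)\ge-\varepsilon_j\,u/m(t_j)$ on $K'$ with $\varepsilon_j\to 0$; since $k\ge 3$ the $C^{k,\alpha'}_{loc}$ convergence lets $L_{g_0}$ pass to the limit, and the right-hand side tends to $0$, yielding $L_{g_0}w\ge 0$ on $K'$ and hence on $M$. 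The two inputs that make everything work are the two-sided Harnack bound (for uniform nondegeneracy) and the slow blow-up $\max u=o(t^{(n-2)/4})$, the latter both controlling the rescaled time window and forcing the limiting inequality to have the correct sign.
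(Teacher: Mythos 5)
Your proposal is correct and takes essentially the same approach as the paper: both rescale the solution around times $t_j$ by $m(t_j)$ in amplitude and $m(t_j)^{\frac{4}{n-2}}$ in time (using Corollary \ref{cor_umaxbound} to keep the rescaled window inside a dyadic interval), derive uniform two-sided bounds from the Harnack inequality of Proposition \ref{prop_harnack} together with the comparison functions $v_b,v_B$ and the curvature bounds of Lemmas \ref{lem_R_lower} and \ref{lem_R_bounds}, apply parabolic Krylov--Safonov and Schauder estimates plus Arzel\`a--Ascoli to get $C^{k,\alpha'}_{loc}$ subconvergence to a positive limit, and obtain \eqref{eq_w} by passing to the limit in the identity $L_{g_0}\bigl(u/m(t)\bigr)=R_{g(t)}u^{\frac{4}{n-2}}\,u/m(t)$ using $R_{g(t)}\geq-\frac{1}{t}$ and $u=o(t^{\frac{n-2}{4}})$. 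The only cosmetic differences are that you normalize $u_\rho$ rather than $u$ and first package the spatial nondegeneracy as a chained, time-uniform Harnack bound before handling the time-comparability of $m$.
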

\begin{proof}
For $\tau>0$, we define
\[L(\tau)=\max_{x\in K}u(x,\tau),\]
and
\[U_\tau(x,t)=L(\tau)^{-1}u\left((x,L(\tau)^{\frac{4}{n-2}}t+\tau\right).\]
Then $U_\tau$ also solves \eqref{flow_equation}. Moreover for $\tau>0$ sufficiently large, $U_\tau$ is uniformly bounded for $t\in\left[-\frac{1}{2},1\right]$ from above on $M$, and uniformly bounded from below away from zero on compact subsets of $M$. To see the upper bound, first observe that by Corollary \ref{cor_umaxbound},
\[L(\tau)^{\frac{4}{n-2}}\leq \tau\]
whenever $\tau$ is sufficiently large. Therefore
\[\sup_{t\in\left[-\frac{1}{2},1\right]}U_\tau(x,t)\leq L(\tau)^{-1}\sup_{s\in\left[\frac{\tau}{2},2\tau\right]}u(x,s).\]
Next, we have that
\[\sup_{s\in\left[\frac{\tau}{2},2\tau\right]}u(x,s)\leq B \sup_{s\in\left[\frac{\tau}{2},2\tau\right]} v_B(x,s)\leq B^2 2^{\frac{n-2}{4}} B(\tau),\]
using for the second inequality that $\frac{\partial}{\partial t} v=-\frac{n-2}{4} R_\rho v$.
Since we also have that
\[b^2 B(\tau)\leq b\sup_{x\in K}v_b(x,\tau)\leq L(\tau),\]
putting all these estimates together yields the upper bound
\begin{equation}
    U_\tau(x,t)\leq \frac{B^2}{b^2}2^{\frac{n-2}{4}},\label{eq_utau_bound}
\end{equation}
which hold for all $t\in \left[\frac{1}{2},1\right]$.

For the lower bound, we first note that positive constant multiples of $u(x,t)$ also satisfy a Harnack inequality as in Proposition \ref{prop_harnack} because $u_\rho(x,t)$ does, since
\begin{align*}
    \sup_{x\in B(p,R)}u(x,t)&\leq B\sup_{x\in B(p,R)}u_\rho(x,t)
    \\
    &\leq BC\inf_{x\in B(p,R)}u_\rho(x,t)\leq\frac{B}{b}C\inf_{x\in B(p,R)}u(x,t).
\end{align*}
We also have that
\[b^2\left(\frac{1}{2}\right)^{\frac{n-2}{4}}B(\tau)\leq b\sup v_b(x,\tau/2)\leq b\sup_{\substack{x\in K\\s\in\left[\frac{\tau}{2},2\tau\right]}}u_\rho(x,s)\leq\sup_{\substack{x\in K\\s\in\left[\frac{\tau}{2},2\tau\right]}}u(x,s),\]
and that
\[L(\tau)\leq B\sup_{x\in K}u_\rho(x,\tau)\leq B^2 B(\tau).\]
Hence
\[\frac{b^2}{B^2}\left(\frac{1}{2}\right)^{\frac{n-2}{4}}\leq L(\tau)^{-1}\sup_{\substack{x\in K\\s\in\left[\frac{\tau}{2},2\tau\right]}}u(x,s).\]
Applying the Harnack inequality and translating back, this shows that $U_\tau$ is indeed bounded from below away from zero uniformly on compact sets for $t\in\left[-\frac{1}{2},1\right]$.

Because $U_\tau$ is uniformly bounded above and below away from zero on compact sets, we can now apply local parabolic Krylov-Safonov and Schauder estimates to \eqref{flow_equation} to obtain uniform local H\"{o}lder estimates on $\ol{\Omega}\times\left[\frac{1}{2},1\right]$. In particular we have uniform $C^{k,\alpha}_{loc}$ control of $U_\tau(x,0)=\frac{u(x,\tau)}{\max_{x\in K}u(x,\tau)}$ and therefore subconvergence in $C^{k,\alpha'}_{loc}$ for any $\alpha'<\alpha$ to some $w(x)>0$ as $\tau\rightarrow\infty$. The positivity of $w(x)$ is a consequence of our uniform lower bound on $\frac{u(x,\tau)}{\max_{x\in K}u(x,\tau)}$ on compact sets.

To verify the equation satisfied by $w$, recall that 
\begin{align*}
&-a_n\Delta_{g_{0}}\frac{u(x,t)}{\max_{x\in K}u(x,t)}+R_{g_{0}}\frac{u(x,t)}{\max_{x\in K}u(x,t)}
\\
&\quad=R_{g(t)}u(x,t)^{\frac{4}{n-2}}\frac{u(x,t)}{\max_{x\in K}u(x,t)}.
\end{align*}
On the right-hand side we have
$R_{g(t)}\geq-\frac{1}{t}$ by Lemma \ref{lem_R_lower} and $u_\rho(x,t)=o(t^{\frac{n-2}{4}})$ by Theorem \ref{rescaledzero}. So it follows that in the limit we have \eqref{eq_w} in the classical sense.

% To control the growth of $w_\rho$, observe that arguments analogous to those above also show that $\frac{v_M(x,t)}{M(t)}$ subconverges to some function $V_M(x)>0$ which satisfies
% \[-a_n\Delta_{g_{\rho,0}}V_M+R_{g_{\rho,0}} V_M= 0.\]
% In this case we have equality because the scalar curvature of $g_\rho$ is also bounded from above by zero, by Lemma \ref{lem_R_bounds}. We also recall that the function $\frac{u_0}{\rho}$, where $u_0$ is defined as in \eqref{eq_u0}, also satisfies
% \[-a_n\Delta_{g_{\rho,0}}\frac{u_0}{\rho}+R_{g_{\rho,0}} \frac{u_0}{\rho}= 0.\]
% Let $C>0$ be such that $CV_M-\frac{u_0}{\rho}\geq 0$ on $\partial K$, with some $x_0\in\partial K$ such that $CV_M(x)-\frac{u_0}{\rho}(x)=0$. Since $\liminf_{|x|\rightarrow\infty}v_M-\frac{u_0}{\rho}\geq 0$, and $CV_M-\frac{u_0}{\rho}$ is harmonic on $M\setminus K$, we see that
% \[\frac{u_0}{\rho}\leq C v_M\quad\text{on }K.\]

% Rewriting everything on $(\ol{M},\hat{g_0})$, where $\ol{M}=M\cup\{q\}$ as before, we have
% \[-a_n\hat{\Delta}\left(\frac{v_M}{u_0/\rho}\right)=0,\quad \text{on }\ol{M}\setminus\{q\},
% \quad \frac{v_M}{u_0/\rho}(p)=O(d(p,q)^{2-n})\quad\text{as }p\rightarrow q.\]
\end{proof}

From the discussion leading up to \eqref{eq_u0}, we know there exists a function $u_0$ which makes \eqref{eq_w} an equality. The following uniqueness result will allow us to identify the limit of the rescaling $\frac{u_{\rho}(x,t)}{\max_{x\in K}u(x,t)}$.

%and in fact $w$ and $u_0$ must coincide, up to a multiplicative constant.

\begin{Prop}\label{Prop_uniqueness}
Suppose $(M,g)$ is a $W^{2,p}_{-\tau}$ AF manifold for some $p>n/2$ and $\tau\geq\frac{n}{2}-2$, with $Y(M,[g])=0$. Recall that there is a positive function $u_0$ solving the equation
\[-a_n\Delta_g u_0+R_gu_0=0,\] such that 
$u_0=O(|z|^{2-n})$ as $|z|\rightarrow\infty$.
Then if $w$ is a positive function on $(M,g)$ in $W^{1,2}_{loc}$ such that 
\[-a_n\Delta_g w+R_gw\ge 0,\]
then there is a positive constant $c$ such that 
$$w=cu_0.$$
\end{Prop}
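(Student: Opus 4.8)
The plan is to pass to the conformal compactification $(\ol{M},\hat{g})$ and to prove that $v:=w/u_0$ is constant. Recall from \eqref{eq_u0} and the discussion following it that $\hat{g}=u_0^{\frac{4}{n-2}}g$ extends to a $C^{1,\alpha}$ metric on the one-point compactification $\ol{M}=M\cup\{q\}$, and that since $Y(M,[g])=0$ its scalar curvature is $\hat{R}\equiv 0$. The first step is to record, using the conformal covariance of the conformal Laplacian $L_g=-a_n\Delta_g+R_g$ together with $L_gu_0=0$, the identity $L_{\hat{g}}(w/u_0)=u_0^{-N}L_g w$. Since $\hat{R}\equiv 0$, the left-hand side equals $-a_n\hat{\Delta} v$, so the hypothesis $L_gw\ge 0$ translates into
\[
-a_n\hat{\Delta} v=u_0^{-N}L_g w\ge 0\qquad\text{weakly on }\ol{M}\setminus\{q\}.
\]
In other words, $v=w/u_0$ is a positive weak supersolution of $\hat{\Delta}$, i.e.\ a positive $\hat{g}$-superharmonic function, on the punctured compact manifold $\ol{M}\setminus\{q\}$.

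The second step is to understand $v$ across the single point $q$, and I expect this to be the main point of the argument, since a priori the behavior of $v$ at $q$ (equivalently, the decay of $w$ at spatial infinity) is unknown. Because $v>0$ is bounded below and $\{q\}$ is polar (a single point has zero capacity for a uniformly elliptic divergence-form operator in dimension $n\ge 3$), the removable-singularity theory for supersolutions applies: $v$ extends to a superharmonic function on all of $\ol{M}$, so its distributional Riesz measure $\mu:=-a_n\hat{\Delta} v$ is a \emph{nonnegative} Radon measure on $\ol{M}$ whose restriction to $\ol{M}\setminus\{q\}$ is the nonnegative function $u_0^{-N}L_g w\,dV_{\hat{g}}$ and whose singular part is a nonnegative multiple of $\delta_q$. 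Here positivity of $w$ is essential, as it prevents a negative singular contribution at $q$. Moreover $v$ is then locally integrable, indeed $v\in W^{1,1}(\ol{M})$ (superharmonic functions lie in $W^{1,p}_{loc}$ for $p<\frac{n}{n-1}$), so the pairing of $\mu$ against smooth functions is $\langle\mu,\varphi\rangle=a_n\int_{\ol{M}}\langle\nabla v,\nabla\varphi\rangle_{\hat{g}}\,dV_{\hat{g}}$. In the setting where this proposition is actually applied---$w$ being a locally uniform limit in Proposition \ref{prop_urho_holder}, hence uniformly bounded on $M$ by \eqref{eq_utau_bound}---this step simplifies, since boundedness of $w$ and the sharp behavior $u_0\sim d_{\hat{g}}(\cdot,q)^{n-2}$ near $q$ give the explicit bound $v\le C\,d_{\hat{g}}(\cdot,q)^{2-n}$, placing $v$ in $W^{1,1}$ near $q$ and making the measure-theoretic extension elementary.

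The third step exploits that $\ol{M}$ is closed. Testing $\mu=-a_n\hat{\Delta} v$ against the constant $\varphi\equiv 1$, for which $\nabla\varphi\equiv 0$, and using a single integration by parts valid for $W^{1,1}$ functions on a manifold without boundary, gives
\[
\mu(\ol{M})=\langle\mu,1\rangle=a_n\int_{\ol{M}}\langle\nabla v,\nabla 1\rangle_{\hat{g}}\,dV_{\hat{g}}=0.
\]
Since $\mu\ge 0$ has total mass zero, $\mu\equiv 0$; in particular the source vanishes, so $u_0^{-N}L_g w\equiv 0$ and hence $L_g w\equiv 0$, and $v$ is weakly harmonic on $\ol{M}$. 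By De Giorgi--Nash--Moser and Schauder estimates for the $C^{1,\alpha}$ metric $\hat{g}$ (the isolated point $q$ being removable for weak solutions), $v$ is continuous and harmonic on all of $\ol{M}$, hence constant by the strong maximum principle on the connected closed manifold. Writing $v\equiv c$, positivity of $w$ forces $c>0$, and therefore $w=c\,u_0$, as claimed.
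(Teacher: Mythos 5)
Your proof is correct, but it takes a genuinely different route from the paper's. The paper never passes to the compactification: it writes $w=\psi u_0$, derives the same conformal-covariance identity in the divergence form $-a_n\Delta w+R_g w=-a_n u_0^{-1}\mathrm{div}\left(u_0^2\,d\psi\right)$, and then kills $d\psi$ by a weighted Caccioppoli argument directly on the noncompact $M$: testing against $\varphi^2\psi^{-1}u_0$ and applying Cauchy--Schwarz gives $\int_M \varphi^2\frac{|d\psi|^2_g}{\psi^2}u_0^2\,dV_g\le 4\int_M|d\varphi|^2_g u_0^2\,dV_g$, and cutoffs $\varphi_\ell$ supported in $\{|z|\le 2\ell\}$ with $|d\varphi_\ell|\le 2/\ell$ make the right-hand side $O(\ell^{-2})\cdot O(\ell^{2(2-n)})\cdot O(\ell^{n})=O(\ell^{2-n})\to 0$, using only the decay $u_0=O(|z|^{2-n})$ and Euclidean volume growth. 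Your argument instead compactifies to $(\ol{M},\hat{g})$ and treats $v=w/u_0$ as a positive supersolution on the punctured closed manifold, invoking removable-singularity/Riesz-measure theory for divergence-form operators (polarity of the point $q$, nonnegativity of the Riesz measure, and the mass-zero constraint on a closed manifold). The two proofs are close in spirit---the paper's cutoffs $\varphi_\ell$ are exactly capacity test functions witnessing that the point at infinity is polar for the weighted form $\int u_0^2|d\cdot|^2_g$, which is the fact you cite abstractly---but they differ in what they lean on. The paper's version is elementary and self-contained, and never has to confront the non-smoothness of $\hat{g}$ at $q$; yours is conceptually transparent (a positive superharmonic function on a closed manifold is constant, and $L_g w\equiv 0$ falls out along the way), but it carries a real citation burden: the potential theory must be known to hold for a divergence-form operator whose coefficients are only $W^{2,p}$/$C^{1,\alpha}$ at $q$ (it does, e.g.\ in the Littman--Stampacchia--Weinberger/Herv\'e framework). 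You correctly identify positivity of $w$ as the hypothesis preventing a negative atom at $q$; this is indeed the crux, since no upper bound on $v$ near $q$ is assumed in the Proposition, so the Green's-function comparison the paper uses in the proofs of Theorems \ref{rescalednega} and \ref{rescaledzero} is not available here and the removable-singularity theorem for supersolutions bounded below is the right substitute. (One small imprecision: since $w\in W^{1,2}_{loc}$, the source $L_g w$ is a priori a nonnegative Radon measure rather than a function, but this changes nothing in your argument.)
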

\proof
We write $w=\psi u_0$ and  a classical calculation yields the equality:
\begin{align*}
-a_n\Delta w+R_gw&=\left(-a_n\Delta u_0+R_gu_0\right)\psi-2a_n\langle du_0,d\psi\rangle-a_nu_0\Delta \psi\\
&=-2a_n\langle du_0,d\psi\rangle-a_nu_0\Delta \psi\\
&=-a_nu_0^{-1}\text{div}\left( u_0^2d\psi\right).
\end{align*}
Integration this identity against $\varphi^2 \psi^{-1} u_0$ where $\varphi$ is a non negative compactly supported function and taking in account our hypothesis, one gets
$$\int_{M} \varphi^2 \frac{|d\psi|^2_g}{\psi^2} u_0^2 dV_{g}\le 2\int_{M} \varphi\langle d\varphi,d\psi\rangle\frac{ u_0^2}{\psi} dV_{g}.$$
And with the Cauchy-Schwarz inequality, one obtains the inequality
$$\int_{M} \varphi^2 \frac{|d\psi|^2_g}{\psi^2} u_0^2 dV_{g}\le 4 \int_{M}|d\varphi|^2_g u_0^2 dV_{g}.$$
Using a sequence of functions $\varphi_\ell$ satisfying
$$\begin{cases} \varphi_\ell=1& \text{ on }~|z|\le \ell\\
 \varphi_\ell=0& \text{ on }~|z|\ge 2\ell\\
|d\varphi_\ell|\le 2/\ell & \text{on } \ell \le |z|\le 2\ell .\end{cases}$$
together with  the decay estimate  of $u_0$ and the fact that
$$V_{g}\left(\{\ell \le |z|\le 2\ell\}\right)=O\left(\ell^{n}\right)$$ one obtains that 
$$\int_{\{ |z|\le \ell\} }  \frac{|d\psi|^2_g}{\psi^2} u_0^2 dV_{g}\le O(\ell^{2-n})$$
Hence letting $\ell\to+\infty$, one deduces that 
$d\psi=0$.
\endproof

Putting everything together, we can now prove Theorem \ref{rescaledzerolimit}.

\begin{proof}[Proof of Theorem \ref{rescaledzerolimit}]
This follows immediately from Propositions \ref{prop_urho_holder} and \ref{Prop_uniqueness}; note that $\sup_{x\in K}\frac{u(x,t)}{\max_{x\in K}u(x,t)}=1$ for all $t$ implies $\sup_{x\in K}w(x)=1$. The uniqueness of $u_0$ up to scaling is provided by either Proposition \ref{Prop_uniqueness} or Proposition \ref{prop_Yamabe_unique}. Because any convergent subsequence of $\frac{u(x,t)}{\max_{x\in K}u(x,t)}$ must converge to the same limit, we in fact have convergence of $\frac{u(x,t)}{\max_{x\in K}u(x,t)}$ as $t\rightarrow\infty$ to $w(x)$.

% Using the same notation as above, we first note that
% \[m^2M(t)\leq m \max_{x\in K}v_m(x,t)\leq\max_{x\in K}u(x,t)\leq M \max_{x\in K}v_M(x,t)\leq M^2 M(t).\]
% Therefore,
% \[\frac{m}{M^2}\frac{u_\rho(x,t)}{M(t)}\leq\frac{u(x,t)}{\max_{x\in K} u(x,t)}= \frac{\rho(x) u_\rho(x,t)}{\max_{x\in K}u(x,t)}\leq \frac{M}{m^2}\frac{u_\rho(x,t)}{M(t)}.\]
% Therefore by the proof of Proposition \ref{prop_w12}, $\frac{u(x,t)}{\max_{x\in K} u(x,t)}$ is uniformly bounded from below on any compact set, and uniformly bounded from above on all of $M$. Next, note that $\phi(x,t)=\frac{u(x,t)}{\max_{x\in K} u(x,t)}$ solves
% \[ -a_n\Delta_{g_0}\phi+R_{g_{0}} \phi=R_{g(t)} u(x,t)^{\frac{4}{n-2}}\phi,\]
% with $R_{g(t)} u(x,t)^{\frac{4}{n-2}}\phi$ uniformly bounded in $L^{n/2}$. Thus, arguing as in the proof of Proposition \ref{prop_w12}, we have that a subsequence with respect to $t$ of $\frac{u(x,t)}{\max_{x\in K} u(x,t)}$ converges in $W^{1,2}_{loc}$ to a positive function $w(x)>0$ satisfying 
% \begin{equation}
% -a_n\Delta_{g_0}w+R_{g_0} w\geq 0.\label{eq_actual_w}
% \end{equation}
% Therefore by Proposition \ref{Prop_uniqueness}, we have $w=cu_0$ for some $c>0$. 
\end{proof}

\bibliographystyle{alpha}
\bibliography{references}

\end{document}